\newcommand{\ZZ}{\mathbb{Z}}
\newcommand{\PP}{\textbf{P}}
\newtheorem{theorem}{Theorem}[section]
\newtheorem{corollary}[theorem]{Corollary}
\newtheorem{lem}[theorem]{Lemma}
\newtheorem{proposition}[theorem]{Proposition}
\newtheorem{example}{Example}[section]
\theoremstyle{definition}
\newtheorem{definition}[theorem]{Definition}
\newtheorem{remark}[theorem]{Remark}
\newcommand{\Hom}{\operatorname{Hom}}
\newcommand{\Ker}{\operatorname{Ker}}
\newcommand{\Ext}{\operatorname{Ext}}
\newcommand{\Leq}{\leq_{\textrm{deg}}}
\title[Quiver Grassmannians  of Dynkin type]{Geometry of Quiver Grassmannians of Dynkin type with applications to cluster algebras}
\author[Giovanni Cerulli Irelli] {Giovanni Cerulli Irelli\thanks{This work was partially financed by the
italian FIRB project ``Perspectives in Lie Theory'' RBFR12RA9W and partially by DFG SPP-1388. 
}}
\begin{document}

\begin{abstract}
The paper includes a new proof of the fact that quiver Grassmannians associated with rigid representations of Dynkin quivers do not have cohomology in odd degrees. Moreover, it is shown that  they do not have torsion in homology. A new proof of the Caldero--Chapoton formula is provided. As a consequence a new proof of the positivity of cluster monomials in the acyclic clusters associated with Dynkin quivers is obtained. The methods used here are based on joint works with Markus Reineke and Evgeny Feigin. 
\end{abstract}

\begin{classification}
Primary 16G70; Secondary 14N05.
\end{classification}

\begin{keywords}
Quiver Grassmannians, Dynkin quivers, Cluster Algebras.
\end{keywords}

\maketitle

\section*{Introduction}
Quiver Grassmannians are the natural generalization in the world of quivers of the usual Grassmannians of linear subspaces: given a representation $M$ of a quiver $Q$ and a dimension vector $\mathbf{e}$, the quiver Grassmannians $Gr_\mathbf{e}(M)$ parametrizes the subrepresentations of $M$ of dimension vector $\mathbf{e}$. The name was suggested by Zelevinsky, and  it is nowadays commonly used. We say that a quiver Grassmannian is of Dynkin type, if it is associated with a complex representation of a Dynkin quiver. 

My first aim in writing this paper was to survey  recents results obtained in collaboration with Evgeny Feigin and Markus Reineke, concerning the geometry of quiver Grassmannians of Dynkin type. While writing it, I was looking for a more conceptual proof of the Caldero--Chapoton formula and I found a surprising result which then changed my original plan. 

The Caldero--Chapoton formula is a  formula which expresses the cluster variables of a cluster algebra associated with a Dynkin quiver in terms of Euler characteristic of some quiver Grassmannians of Dynkin type. After this formula appeared,  it was hoped that a better knowledge of the geometry of quiver Grassmannians would improve our knowledge on the corresponding cluster algebra. This paper provides a new evidence of this general philosophy which was the heart of my project ``categorification of positivity in cluster algebras'' financed by the  DFG priority program SPP-1388. 

Let me briefly explain the main results of the paper.  Given a dimension vector $\mathbf{d}\in
\ZZ^{Q_0}_{\geq0}$, we denote by $R_\mathbf{d}$ the affine space parametrizing $Q$--representations of dimension vector $\mathbf{d}$ (see section~\ref{Sec:1} for details). The group $G_\mathbf{d}=\prod_{i\in Q_0} GL_{d_i}(K)$ acts on $R_\mathbf{d}$ by base change (K denotes the field of complex numbers) and $G_\mathbf{d}$--orbits correspond to isoclasses of $Q$--representations. Given $M\in R_\mathbf{d}$ and a dimension vector  $\mathbf{e}$, we denote by $Gr_\mathbf{e}(M)$ the projective variety consisting of sub--representations of $M$ of dimension vector $\mathbf{e}$ (see section~\ref{Sec:QG} for details). It can easily be proved that
$$
\textrm{dim }Gr_\mathbf{e}(M)\geq\langle\mathbf{e},\mathbf{d-e}\rangle
$$
where $\langle-,-\rangle$ denotes the Euler form of Q. If equality holds, then we say that $Gr_\mathbf{e}(M)$ has minimal dimension. Since $Q$ is Dynkin, for  every dimension vector $\mathbf{d}$ there exists a representation, that we denote by $\tilde{M}_\mathbf{d}$, whose orbit is dense in $R_\mathbf{d}$. This module is uniquely (up to isomorphisms) determined in $R_\mathbf{d}$ by the condition $\textrm{Ext}^1(\tilde{M}_\mathbf{d}, \tilde{M}_\mathbf{d})=0$ which means that it is rigid.  It can be proved that if $Gr_\mathbf{e}(\tilde{M}_\mathbf{d})$ is non--empty then it is smooth and irreducible of minimal dimension (see Theorem~\ref{Thm:RigQuivGras}). Non--emptiness of  $Gr_\mathbf{e}(\tilde{M}_\mathbf{d})$ is equivalent to $\textrm{Ext}^1_Q(\tilde{M}_\mathbf{e},\tilde{M}_\mathbf{d-e})=0$ (see Corollary~\ref{Cor:Criterion NonEmpty}). The surprising result mentioned above is the following:
\begin{theorem}\label{Thm:NewIntro}
Let $\mathbf{e},\mathbf{d}$ be dimension vectors for Q, such that $\mathbf{d-e}$ is again a dimension vector. Then all non--empty quiver Grassmannians of the form  $Gr_\mathbf{e}(M)$ (where $M\in R_\mathbf{d}$) which are smooth and of minimal dimension are diffeomorphic to each other. In this case, they are all diffeomorphic to $Gr_\mathbf{e}(\tilde{M}_\mathbf{d})$ and hence they are irreducible and they share the same Poincar\'e polynomial and  Euler characteristic.  
\end{theorem}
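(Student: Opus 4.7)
My plan is to realize all the quiver Grassmannians in question as fibers of a single proper morphism over an irreducible base, and then to invoke Ehresmann's fibration theorem. The first step is to form the \emph{universal} quiver Grassmannian
\[
\mathcal{X} = \{(M,(V_i)) : M_\alpha(V_{s(\alpha)})\subset V_{t(\alpha)} \text{ for every arrow } \alpha\} \subset R_\mathbf{d} \times \prod_{i\in Q_0} Gr(e_i,d_i).
\]
Projecting $\mathcal{X}$ onto the second factor, the fiber over a tuple $(V_i)$ is the linear subspace of $R_\mathbf{d}$ of representations preserving the chosen subspaces, so $\mathcal{X}$ is a vector bundle over $\prod_i Gr(e_i,d_i)$ and, in particular, is smooth and irreducible. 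A short dimension count gives $\dim\mathcal{X}-\dim R_\mathbf{d}=\langle\mathbf{e},\mathbf{d-e}\rangle$, so the relative dimension of the first projection $\pi\colon\mathcal{X}\to R_\mathbf{d}$ coincides with the minimal fiber dimension appearing in the statement.

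Next I would let $Z\subset\mathcal{X}$ be the closed non-smoothness locus of $\pi$ and set $U:=R_\mathbf{d}\setminus\pi(Z)$. Properness of $\pi$ makes $U$ open. Over $U$, the morphism $\pi$ is smooth by construction, so its fibers are smooth of the expected dimension $\langle\mathbf{e},\mathbf{d-e}\rangle$. Conversely, if $Gr_\mathbf{e}(M)$ is smooth and of minimal dimension, then at every point $x$ of it the tangent space $T_x\,Gr_\mathbf{e}(M)=\ker d\pi_x$ has dimension $\dim\mathcal{X}-\dim R_\mathbf{d}$; this forces $d\pi_x$ to be surjective, so $\pi$ is smooth at $x$, and hence $M\in U$. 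In particular the hypothesis of the theorem makes $U$ non-empty. Since $R_\mathbf{d}$ is an affine space, and thus irreducible, $U$ is open, dense, and connected. Finally $U$ is $G_\mathbf{d}$-invariant because its defining condition depends only on the isomorphism class of $M$, so it meets the dense orbit of $\tilde{M}_\mathbf{d}$ and in fact contains it; this also yields $Gr_\mathbf{e}(\tilde{M}_\mathbf{d})\neq\emptyset$.

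The restriction $\pi^{-1}(U)\to U$ is then a proper submersion of complex manifolds, to which Ehresmann's fibration theorem applies: it is a locally trivial $C^\infty$ fiber bundle over the connected base $U$. Hence all fibers are diffeomorphic, and every $Gr_\mathbf{e}(M)$ satisfying the hypothesis is diffeomorphic to $Gr_\mathbf{e}(\tilde{M}_\mathbf{d})$. The statements about irreducibility, Poincar\'e polynomial, and Euler characteristic are then immediate. In my view the main obstacle is the reverse implication in the characterization of $U$: one has to verify that a single smooth, minimal-dimensional fiber already forces $\pi$ to be smooth on an entire neighborhood of that fiber. The remaining ingredients---the vector bundle structure of $\mathcal{X}$, properness of $\pi$, and Ehresmann's theorem---are entirely standard.
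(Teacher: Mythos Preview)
Your proposal is correct and follows essentially the same route as the paper: both construct the universal quiver Grassmannian as a vector bundle over $\prod_i Gr(e_i,d_i)$, restrict the proper projection $\pi$ to the open locus in $R_\mathbf{d}$ over which the fibers are smooth of minimal dimension, and then invoke Ehresmann's theorem over this connected base. The only cosmetic difference is that the paper deduces smoothness of the restricted morphism from flatness (equidimensional fibers between smooth varieties) together with smoothness of the fibers, whereas you verify directly that $d\pi_x$ is surjective at each point of such a fiber; for morphisms between smooth varieties these two arguments are equivalent, and your worry about the ``reverse implication'' is in fact already settled by your rank--nullity computation.
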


From this result, the formula of Caldero--Chapoton follows immediately. Let me explain this point. Let 
$$
\xymatrix{
0\ar[r]&\tau M\ar[r]&E\ar[r]&M\ar[r]&0}
$$
be an almost split sequence for Q. As recalled in Section~\ref{Sec:CCFormula}, the CC--formula is a direct consequence of the  following equality
\begin{equation}\label{CCIntro}
\chi(Gr_\mathbf{e}(E))=\chi(Gr_\mathbf{e}(M\oplus\tau M))\qquad\forall\mathbf{e}\neq\mathbf{dim }M
\end{equation}
where $\chi$ denotes the Euler--Poincar\'e characteristic. 
It can be proved (see Proposition~\ref{Prop:QGMTauM}) that $Gr_\mathbf{e}(M\oplus\tau M)$ is smooth of minimal dimension, and hence by  theorem~\ref{Thm:NewIntro}, it is diffeomorphic to $Gr_\mathbf{e}(E)$, since $E$ is rigid. In particular, \eqref{CCIntro} holds. Moreover the Poincar\'e polynomial $P_{Gr_\mathbf{e}(M\oplus\tau M)}(q)$ of $Gr_\mathbf{e}(M\oplus\tau M)$ equals the Poincar\'e polynomial $P_{Gr_\mathbf{e}(E)}(q)$ of $Gr_\mathbf{e}(E)$ for $\mathbf{e}\neq\mathbf{dim }M$. This fact, implies the new formula:  
\begin{align}\label{Eq:FormulaQuantum}
P_{Gr_\mathbf{e}(E)}(q)&=&\sum_{\mathbf{f+g=e}}q^{2\langle\mathbf{f},\mathbf{dim }\tau M-\mathbf{g}\rangle}P_{Gr_\mathbf{f}(M)}(q)P_{Gr_\mathbf{g}(\tau M)}(q)\\\nonumber
&=&\sum_{\mathbf{f+g=e}}q^{2\langle\mathbf{g},\mathbf{dim }M-\mathbf{f}\rangle}P_{Gr_\mathbf{f}(M)}(q)P_{Gr_\mathbf{g}(\tau M)}(q)
\end{align}
(the second equality follows by Poincar\'e duality). In \cite{Qin} it is shown that the Poincar\'e polynomials of certain quiver Grassmannians  are the coefficients of the quantum F--polynomials of quantum cluster variables. The proof is obtained by reduction to the non--quantum case. I wonder if formula~\eqref{Eq:FormulaQuantum} can provide a direct proof of this.

Another interesting consequence of Theorem~\ref{Thm:NewIntro} is the following result
\begin{theorem}\label{thm:noOddIntro}
Let E be a rigid representation of a Dynkin quiver. Then every quiver Grassmannian $Gr_\mathbf{e}(E)$ associated with E has no odd cohomology. In particular $\chi(Gr_\mathbf{e}(E))\geq0$. 
\end{theorem}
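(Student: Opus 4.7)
The plan is to proceed by induction on $|\mathbf{d}|=\sum_i d_i$, where $\mathbf{d}=\mathbf{dim}\,E$, using formula~\eqref{Eq:FormulaQuantum} as the main engine. The base case $|\mathbf{d}|=0$ is immediate, since $Gr_\mathbf{e}(0)$ is either empty or a reduced point, hence has no cohomology in positive degree.

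The heart of the induction is the case where $E$ is itself the middle term of an almost split sequence $0\to \tau N\to E\to N\to 0$. Then $N$ and $\tau N$ are rigid of strictly smaller total dimension. Proposition~\ref{Prop:QGMTauM} ensures that $Gr_\mathbf{e}(N\oplus\tau N)$ is smooth of minimal dimension; since $E$ is rigid, Theorem~\ref{Thm:NewIntro} supplies a diffeomorphism $Gr_\mathbf{e}(E)\cong Gr_\mathbf{e}(N\oplus\tau N)$ for $\mathbf{e}\neq \mathbf{dim}\,N$, and equating Poincar\'e polynomials yields~\eqref{Eq:FormulaQuantum}
$$
P_{Gr_\mathbf{e}(E)}(q)=\sum_{\mathbf{f}+\mathbf{g}=\mathbf{e}} q^{2\langle\mathbf{f},\,\mathbf{dim}\,\tau N-\mathbf{g}\rangle}\,P_{Gr_\mathbf{f}(N)}(q)\,P_{Gr_\mathbf{g}(\tau N)}(q).
$$
By the inductive hypothesis each of $P_{Gr_\mathbf{f}(N)}(q)$ and $P_{Gr_\mathbf{g}(\tau N)}(q)$ lies in $\ZZ[q^2]$; since the exponent is always even, $P_{Gr_\mathbf{e}(E)}(q)\in\ZZ[q^2]$, so $Gr_\mathbf{e}(E)$ has no odd cohomology. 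The exceptional value $\mathbf{e}=\mathbf{dim}\,N$ is handled by analysing the two irreducible components of $Gr_\mathbf{e}(N\oplus\tau N)$ separately, each smaller-dimensional and rigid. Non-negativity of $\chi(Gr_\mathbf{e}(E))$ then follows immediately since the Euler characteristic collapses to a sum of non-negative even Betti numbers.

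To reduce a general rigid $E$ to the middle-term situation I would distinguish two subcases. If $E=X\oplus Y$ admits a non-trivial decomposition into rigid summands, I stratify $Gr_\mathbf{e}(X\oplus Y)$ by the intersection dimension vector $\mathbf{f}=\mathbf{dim}(U\cap X)$; each stratum is an affine-space bundle, with fibres $\Hom_Q(B,X/A)$, over the product $Gr_\mathbf{f}(X)\times Gr_{\mathbf{e}-\mathbf{f}}(Y)$ of strictly smaller rigid quiver Grassmannians, and induction together with a Leray-type spectral-sequence argument gives the absence of odd cohomology. If $E$ is indecomposable and rigid---in particular if $E$ is an indecomposable projective or injective, which are never middle terms of almost split sequences---I would embed $E$ as a direct summand of the middle term $F=E\oplus L$ of a carefully chosen almost split sequence, apply the central case to $F$, and then use the decomposable stratification above on $Gr_\mathbf{e}(F)\cong Gr_\mathbf{e}(E\oplus L)$ to extract $P_{Gr_\mathbf{e}(E)}(q)$.

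The principal obstacle is exactly this last reduction for indecomposables: producing, for a given indecomposable rigid $E$, a compatible $L$ so that $E\oplus L$ is an AR middle term, and then extracting $P_{Gr_\mathbf{e}(E)}(q)$ cleanly from the larger Poincar\'e polynomial of $Gr_\mathbf{e}(E\oplus L)$. Theorem~\ref{Thm:NewIntro} is indispensable here, since it allows any representation of a given dimension vector to be replaced by the generic rigid one without altering cohomology; the bookkeeping is then organised along the Auslander--Reiten quiver of $Q$ in order to close the induction.
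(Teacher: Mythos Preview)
Your proposal has the right ingredients --- the diffeomorphism from Theorem~\ref{Thm:NewIntro}, formula~\eqref{Eq:FormulaQuantum}, and a Bialynicki--Birula type stratification for direct sums --- but the inductive scheme does not close. The problem is the last reduction, which you yourself flag as ``the principal obstacle'': handling an indecomposable rigid $E$ by embedding it as a summand of some AR middle term $F=E\oplus L$. To run your ``central case'' on $F$ you need the result for the ends $N$ and $\tau N$ of the almost split sequence $0\to\tau N\to F\to N\to 0$. These satisfy $|\mathbf{dim}\,N|,|\mathbf{dim}\,\tau N|<|\mathbf{dim}\,F|$, but there is no reason for them to satisfy $|\mathbf{dim}\,N|,|\mathbf{dim}\,\tau N|<|\mathbf{dim}\,E|$, which is what your induction on total dimension would require. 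So at step $k=|\mathbf{dim}\,E|$ you are invoking the statement for indecomposables of dimension possibly $\geq k$, and the argument is circular. (Two smaller inaccuracies: for $\mathbf{e}=\mathbf{dim}\,N$ the variety $Gr_\mathbf{e}(E)$ is in fact empty and $Gr_\mathbf{e}(N\oplus\tau N)$ is a single reduced point, not two components; and projectives/injectives can perfectly well occur as summands of AR middle terms.)

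The paper organises the induction differently, and this is exactly what makes it work. One inducts along a total order on the vertices of the Auslander--Reiten quiver, proving the statement for \emph{indecomposable} $M$ first. The base case is $M$ projective, where every nonempty $Gr_\mathbf{e}(M)$ is a point. For $M=M(i;k)$ non--projective, the middle term $E$ of the almost split sequence $0\to\tau M\to E\to M\to 0$ decomposes into indecomposables $M(j;\ell)$ all of which \emph{precede} $(i;k)$ in the AR order (see~\eqref{Eq:ARQuiverVertices}); hence by induction and the direct--sum formula (the paper's Lemma~\ref{Lemma:Poincare}) $Gr_\mathbf{e}(E)$ has no odd cohomology. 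Since $\tau M=M(i;k-1)$ is also earlier, the diffeomorphism $Gr_\mathbf{e}(E)\simeq Gr_\mathbf{e}(M\oplus\tau M)$ together with the Poincar\'e formula then lets one \emph{extract} $P_{Gr_\mathbf{e}(M)}(q)$ as the single unknown term. Note that the direction of flow is opposite to yours: the paper deduces the result for the indecomposable $M$ \emph{from} the middle term $E$ (whose summands are already known), whereas you try to deduce middle terms from their ends and then recover indecomposables afterwards. Replacing your dimension induction by the AR--quiver induction repairs the argument.
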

The fact that quiver Grassmannians of Dynkin type have positive Euler characteristic was first proved by Caldero and Keller \cite[Theorem~3]{CK1}. The fact that quiver Grassmannians associated with rigid representations of acyclic quivers have no odd cohomology was proved by Qin \cite{Qin} and by Nakajima \cite{Naka}  using the Decomposition Theorem and Fourier--Sato--Deligne transform. After the work of Nakajima, other proofs of the positivity conjecture appeared in the literature \cite{Efimov}, \cite{Davison} as consequence of results on quantum cluster algebras. The proof given here for Dynkin quivers, which I believe can be extended to a more general setting, is based on ideas developed in joint papers with E. Feigin and M. Reineke \cite{CFR,CFR2, CFR3} and uses only well--known results of algebraic/differential geometry (namely the Ehresmann's localization theorem \cite{Ehr} and the Bialynicky--Birula theorem \cite[Theorem~4.1]{BB}). 

The following result concerns (singular) homology groups of quiver Grassmannians which are smooth of minimal dimension. 

\begin{theorem}
Let $X$ be a smooth  quiver Grassmannian of minimal dimension (of Dynkin type). Then $H_i(X)$ is zero if i is odd and it has no torsion if i is even.
\end{theorem}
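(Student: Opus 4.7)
The plan is to exhibit an affine paving of $X$ by means of a $K^*$-action and the Bialynicki--Birula decomposition theorem; both conclusions then follow simultaneously from cellular homology.

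By Theorem~\ref{Thm:NewIntro} the variety $X$ is diffeomorphic to $Y:=Gr_\mathbf{e}(\tilde{M}_\mathbf{d})$ with $\mathbf{d}=\mathbf{dim}\,M$, so it suffices to treat the case $X=Y$. Using Gabriel's theorem, write $\tilde{M}_\mathbf{d}=\bigoplus_\alpha U_\alpha^{m_\alpha}$ with the $U_\alpha$ pairwise non--isomorphic indecomposable representations; since $Q$ is Dynkin, each $U_\alpha$ is a brick, so the torus $T=\prod_\alpha(K^*)^{m_\alpha}$ embeds as a maximal torus of the automorphism group of $\tilde{M}_\mathbf{d}$ and acts on $Y$ by $Q$-automorphisms. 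Following the strategy of \cite{CFR2,CFR3}, one refines $T$ and picks a sufficiently generic one--parameter subgroup $\lambda:K^*\to T$ such that the fixed locus $Y^{K^*}$ is finite and in explicit bijection with a combinatorial set of ``coordinate'' subrepresentations.

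Since $Y$ is smooth and projective by Theorem~\ref{Thm:RigQuivGras}, the Bialynicki--Birula theorem applied to $\lambda$ yields a decomposition
$$
Y=\bigsqcup_{p\in Y^{K^*}}Y_p^{+},
$$
where each attracting cell $Y_p^{+}$ is isomorphic to an affine space $\mathbb{A}^{n_p}$ of complex dimension $n_p$ equal to the number of positive $\lambda$-weights on $T_pY$. Being algebraic, these attracting cells are contractible and of even real dimension, so the decomposition is an affine paving, and the associated CW structure on $Y$ has cells only in even real dimensions. The cellular chain complex therefore has trivial differentials, which at once yields $H_i(Y;\ZZ)=0$ for $i$ odd and $H_i(Y;\ZZ)$ free abelian of rank equal to the number of cells of real dimension $i$ for $i$ even.

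The main obstacle is the construction of $\lambda$ with finite fixed locus on $Y$: although the naive torus $T$ acts, for some Dynkin indecomposables $U_\alpha$ the quiver Grassmannian of $U_\alpha$ itself may carry positive-dimensional continuous families of subrepresentations, producing positive-dimensional components of $Y^T$. One therefore needs a combinatorial refinement of $T$, exploiting the preferred basis of $\tilde{M}_\mathbf{d}$ indexed by the positive roots of $Q$, as worked out in \cite{CFR2}. Granted this technical step, Bialynicki--Birula together with cellular homology yields both halves of the statement in a single stroke.
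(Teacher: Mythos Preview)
Your proposal has a genuine gap, and it is not a technical one: the ``refinement'' of the torus $T$ you invoke in the last paragraph does not exist in general. The torus $T=\prod_\alpha(K^*)^{m_\alpha}$ you describe is a maximal torus of $\textrm{Aut}_Q(\tilde{M}_\mathbf{d})$; since each indecomposable $U_\alpha$ is a brick, its automorphism group is exactly $K^*$, acting trivially on every $Gr_\mathbf{f}(U_\alpha)$. There is no larger torus of $Q$--automorphisms to pass to, and a torus acting only on the ambient $Gr_\mathbf{e}(\mathbf{d})$ will not preserve the quiver Grassmannian. Consequently the fixed locus $Y^T$ is a disjoint union of products $\prod_\alpha Gr_{\mathbf{f}_\alpha}(U_\alpha)$, which are positive--dimensional in general, and Bialynicki--Birula does not yield an affine paving. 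The references \cite{CFR2,CFR3} do not supply the missing step; in fact the paper explicitly notes (in the remark following Theorem~\ref{Thm:(S)}) that the existence of a cellular decomposition for arbitrary Dynkin quiver Grassmannians is the Caldero--Keller conjecture, known only for equioriented type~A. Your argument therefore assumes what is, at the time of writing, an open problem.

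The paper's proof avoids affine pavings entirely. It first treats indecomposables by induction along a total order on the AR--quiver (Theorem~\ref{Thm:(S)}): for non--projective $M$, the diffeomorphism $Gr_\mathbf{e}(E)\simeq Gr_\mathbf{e}(M\oplus\tau M)$ from Theorem~\ref{Thm:ARQG} transports property~(\textbf{S}1) to the latter, and then two different $\mathbf{C}^\ast$--actions on $M\oplus\tau M$ (scaling either factor) give $\alpha$--partitions whose pieces are affine bundles over products $Gr_\mathbf{f}(M)\times Gr_{\mathbf{e}-\mathbf{f}}(\tau M)$; one action makes $Gr_\mathbf{e}(M)$ the open stratum, the other makes it the closed stratum, and the two associated long exact sequences in Borel--Moore homology, combined with the odd--vanishing from Theorem~\ref{Thm:NoOddInd}, squeeze out both the odd--vanishing and the torsion--freeness for $Gr_\mathbf{e}(M)$. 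The general case then follows by another $\alpha$--partition argument on the decomposition into indecomposable summands. The point is that Bialynicki--Birula is used with \emph{non--isolated} fixed loci that satisfy~(\textbf{S}1) by induction, never with isolated fixed points.
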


The paper is fairly self--contained and does not assume any particular knowledge from the reader. Precise references are provided, and most of the results are fully proved.  It is organized as follows: in Section~\ref{Sec:1} we recall the basics on representation theory of quivers and (classical) Auslander--Reiten theory. In Section~\ref{Sec:QG} some properties of quiver Grassmannians are recalled. In Section~\ref{Sec:Degeneration} we discuss a theorem of Bongartz, concerning degenerations of quiver representations and its consequences for quiver Grassmannians. Section~\ref{Sec:Cluster} contains the main results of the paper. 
Finally, Section~\ref{Sec:GVect} contains applications to cluster algebras and the complete proof of the Caldero--Chapoton formula.

\section{Basics on representation theory of Dynkin quivers}\label{Sec:1}
In this section we recall some basic facts on representations of quivers. Standard references for this are \cite{R}, \cite{ARS}, \cite{ASS}.
A (finite) quiver $Q=(Q_0,Q_1,s,t)$ is an ordered quadruple in which $Q_0$ denotes a finite set of vertices (whose cardinality is always denoted with the letter $n$), $Q_1$ is a finite set of edges, and $s$ and $t$ are two functions $s,t: Q_1\rightarrow Q_0$ which provide an orientation of the edges. For an oriented edge  $\alpha$ we write $\alpha:s(\alpha)\rightarrow t(\alpha)$. A  quiver $Q$ is called Dynkin if the underlying graph $\Delta=(Q_0,Q_1)$ is a (possibly union of) simply--laced Dynkin diagram of type A, D or E shown in figure~\ref{Fig:Dynkin}.
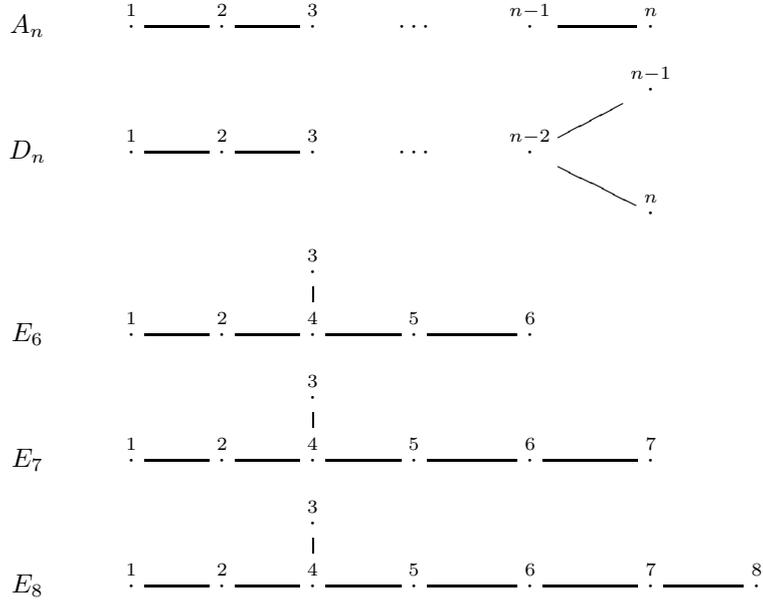
\begin{figure}
$$
\xymatrix@R=5pt{
A_n&\stackrel{1}{\cdot}\ar@{-}[r]&\stackrel{2}{\cdot}\ar@{-}[r]&\stackrel{3}{\cdot}&\cdots&\stackrel{n-1}{\cdot}\ar@{-}[r]&\stackrel{n}{\cdot}\\
&&&&&&\stackrel{n-1}{\cdot}\\
D_n&\stackrel{1}{\cdot}\ar@{-}[r]&\stackrel{2}{\cdot}\ar@{-}[r]&\stackrel{3}{\cdot}&\cdots&\stackrel{n-2}{\cdot}\ar@{-}[ur]\ar@{-}[dr]&\\
&&&&&&\stackrel{n}{\cdot}\\
&&&\stackrel{3}{\cdot}&&&\\
E_6&\stackrel{1}{\cdot}\ar@{-}[r]&\stackrel{2}{\cdot}\ar@{-}[r]&\stackrel{4}{\cdot}\ar@{-}[r]\ar@{-}[u]&\stackrel{5}{\cdot}\ar@{-}[r]&\stackrel{6}{\cdot}&\\
&&&\stackrel{3}{\cdot}&&&\\
E_7&\stackrel{1}{\cdot}\ar@{-}[r]&\stackrel{2}{\cdot}\ar@{-}[r]&\stackrel{4}{\cdot}\ar@{-}[r]\ar@{-}[u]&\stackrel{5}{\cdot}\ar@{-}[r]&\stackrel{6}{\cdot}\ar@{-}[r]&\stackrel{7}{\cdot}\\
&&&\stackrel{3}{\cdot}&&&\\
E_8&\stackrel{1}{\cdot}\ar@{-}[r]&\stackrel{2}{\cdot}\ar@{-}[r]&\stackrel{4}{\cdot}\ar@{-}[r]\ar@{-}[u]&\stackrel{5}{\cdot}\ar@{-}[r]&\stackrel{6}{\cdot}\ar@{-}[r]&\stackrel{7}{\cdot}\ar@{-}[r]&\stackrel{8}{\cdot}
}
$$
\caption{Simply--laced Dynkin diagrams}\label{Fig:Dynkin}
\end{figure}

Let $Q$ be a Dynkin quiver.  A representation of $Q$ is a pair of tuples $V=((V_i)_{i\in Q_0}, (V_\alpha)_{\alpha\in Q_1})$ where $V_i$ is a vector space over a field K and $V_\alpha:V_{s(\alpha)}\rightarrow V_{t(\alpha)}$  is a linear map. In this paper we only deal with complex representations, and hence K denotes always the field of complex numbers. 
A $Q$--morphism $\psi:V\rightarrow W$ from two $Q$--representations is a collections $(\psi_i:V_i\rightarrow W_i)_{i\in Q_0}$ of linear maps such that the following square 
$$
\xymatrix{
V_{s(\alpha)}\ar^{V_{\alpha}}[r]\ar_{\psi_{s(\alpha)}}[d]&V_{t(\alpha)}\ar^{\psi_{t(\alpha)}}[d]\\
W_{s(\alpha)}\ar^{W_{\alpha}}[r]&W_{t(\alpha)}
}
$$
commutes for every arrow $\alpha$ of $Q$. The set of $Q$--morphisms between two $Q$--representations $V$ and $W$ is a vector space that we denote by $\Hom_Q(V,W)$. The category $\textrm{Rep}_K(Q)$ of $Q$--representations is hence an abelian category (with the obvious notions of kernel and cokernel). 

To a quiver $Q$ is associated its (complex) path-algebra $A=KQ$, which is the algebra formed by concatenation of arrows. The category $\textrm{Rep}_K(Q)$ is equivalent to the category $A$--mod of $KQ$--modules. 

The set $\{e_i\}_{i\in Q_0}$ of paths of length zero form a complete set of pairwise orthogonal idempotents of $A$. As a consequence, the projective indecomposable (left) $A$--modules are $\{P_i:=Ae_i\}_{i\in Q_0}$. The path algebra $A=KQ$, viewed as a left $A$--module, decomposes as $A=\bigoplus_{i\in Q_0} P_i$.   As  $Q$--representation, $P_i$ is described as follows: the vector space at vertex $k$ has a basis given by paths from vertex $i$ to vertex $k$, and the arrows act by ``concatenation''. Since $Q$ is the orientation of a tree, every projective $P_i$ is thin, which means that the vector space $(P_i)_k$ associated to every vertex $k$ is at most one--dimensional. Moreover, it is also clear that  $P_i$ has only one maximal sub-representation which hence coincides with its radical and the quotient is the simple $S_i$:
\begin{equation}\label{Eq:ProjPresS}
\xymatrix{
0\ar[r]& \textrm{rad}(P_i)=\bigoplus_{i\rightarrow j} P_j\ar[r] &P_i\ar[r] &S_i\ar[r] &0.
}
\end{equation} 
We notice that \eqref{Eq:ProjPresS} is the minimal projective presentation of $S_i$. 

Dually, the injective indecomposable (left) $A$--modules are the indecomposable direct summands of $DA$ (viewed as left $A$--module), where $D$ is the standard K--duality. They are denoted by $\{I_k\}_{k\in Q_0}$.  As $Q$--representation, $I_k$ has at vertex $j$ a vector space with basis consisting of all the paths of $Q$ starting in $j$ and ending in $k$, and the arrows act by ``concatenation''.  From this, we see that $I_k$ has simple socle $S_k$ and the quotient is injective again: 
\begin{equation}\label{Eq:InjPresS}
\xymatrix{
0\ar[r]& S_k\ar[r] &I_k\ar[r] &\bigoplus_{j\rightarrow k} I_j\ar[r]& 0.
}
\end{equation} 
The short exact sequence \eqref{Eq:InjPresS} is the minimal injective resolution of $S_k$. The opposite algebra $A^{op}$, which is the path algebra of the opposite quiver, viewed as a (left) $A$--module is given by $A^{op}=\bigoplus_{j\in Q_0}I_j$. 

We hence see that subrepresentations of projectives are projectives, and hence the category $\textrm{Rep}_K(Q)$ is hereditary, in the sense that every module has projective and injective dimension at most 1. In other words, every module $M$ admits a minimal projective resolution
of the form
\begin{equation}\label{Eq:MinProjRes}
\xymatrix{
0\ar[r]&P_1^M\ar^{\iota_M}[r]&P_0^M\ar^{\pi_M}[r]&M\ar[r]&0
}
\end{equation}
and a minimal injective resolution: 
\begin{equation}\label{Eq:MinInjRes}
\xymatrix{
0\ar[r]&M\ar[r]&I_0^M\ar[r]&I_1^M\ar[r]&0.
}
\end{equation}
It is not hard to describe the indecomposable direct summands of $P_0^M$ and $P_1^M$:
\begin{equation}\label{Eq:ProjReso}
\begin{array}{cc}
P_0^M=\bigoplus_{i\in Q_0}P_i^{[M,S_i]}&P_0^M=\bigoplus_{i\in Q_0}P_i^{[M,S_i]^1}
\end{array}
\end{equation}
where the following shorthand is used and will be used throughout the text
$$
\begin{array}{cc}
[M,N]:=\textrm{dim }\Hom_Q(M,N)&[M,N]^1:=\textrm{dim }\Ext_Q^1(M,N)
\end{array}
$$
for any representations $M$ and $N$. Similarly, 
\begin{equation}\label{Eq:injeReso}
\begin{array}{cc}
I_0^M=\bigoplus_{j\in Q_0}I_j^{[S_j,M]}&I_1^M=\bigoplus_{j\in Q_0}I_j^{[S_j,M]^1}.
\end{array}
\end{equation}

For a $Q$--representation M, the collection  $(\dim M_i)_{i\in Q_0}\in\ZZ^{Q_0}_{\geq0}$ of non--negative integers is called the \emph{dimension vector} of $M$, and it is denoted in bold by $\mathbf{dim}\, M$. Once the dimension vector is fixed, a $Q$--representation is uniquely determined by linear maps: this leads us to the variety of $Q$--representations. Let $\mathbf{d}=(d_i)_{i\in Q_0}\in\ZZ^{Q_0}_{\geq0}$ be a dimension vector. The vector space 
$$
R_\mathbf{d}:=\bigoplus_{i\in Q_0} \Hom_K(K^{d_i},K^{d_i})
$$
is the \emph{variety of $Q$--representations of dimension vector $\mathbf{d}$}. The group
$$
G_\mathbf{d}:=\prod_{i\in Q_0}\textrm{GL}_{d_i}(K)
$$
acts on $R_\mathbf{d}$ by base change:
$
(g_i)_\alpha\cdot (V_\alpha)_\alpha:=(g_{t(\alpha)}V_\alpha g_{s(\alpha)}^{-1})_\alpha
$
and $G_\mathbf{d}$--orbits are in bijection with isoclasses of $Q$--representations. 

Given another dimension vector $\mathbf{e}\in\ZZ^{Q_0}_{\geq0}$ we consider the vector space of (``degree zero'') $K$--morphisms
$$
\Hom(\mathbf{e},\mathbf{d})=\bigoplus_{i\in Q_0} \Hom_K(K^{e_i},K^{d_i})
$$ 
and the vector space of (``degree one'') $K$--morphisms
$$
\Hom(\mathbf{e},\mathbf{d}[1])=\bigoplus_{\alpha \in Q_1} \Hom_K(K^{e_{s(\alpha)}},K^{d_{t(\alpha)}}).
$$ 
Given  $N\in R_\mathbf{e}$ and $M\in R_\mathbf{d}$ we consider the map
$$
\Phi^M_N:\Hom(\mathbf{e},\mathbf{d})\rightarrow \Hom(\mathbf{e},\mathbf{d}[1]):\;(f_i)_{i\in Q_0}\mapsto (M_\alpha\circ f_{s_{\alpha}}-f_{t(\alpha)}\circ N_\alpha)_{\alpha\in Q_1}
$$
This is a linear map between finite dimensional vector spaces and one can show quite easily (see e.g. \cite{R},  \cite{ASS}): 
$$
\begin{array}{cc}
\Ker \Phi_N^M=\Hom_Q(N,M),&\textrm{CoKer}\, \Phi_N^M\simeq \Ext^1_Q(N,M).
\end{array}
$$
From these formulas we immediately get:
\begin{equation}\label{Eq:Euler}
\dim \Hom_Q(N,M)- \dim \Ext^1_Q(N,M)=\dim \Hom(\mathbf{e},\mathbf{d})-\dim\Hom(\mathbf{e},\mathbf{d}[1]).
\end{equation}
We notice: $\dim \Hom(\mathbf{e},\mathbf{d})=\sum_{i\in Q_0}\!\!e_id_i$ and $\dim\Hom(\mathbf{e},\mathbf{d}[1])=\sum_{\alpha\in Q_1}\!\!e_{s(\alpha)}d_{t(\alpha)}$. In particular, if $\mathbf{d}$ is a dimension vector we get
\begin{equation}\label{Eq:DimRd}
\dim \Hom(\mathbf{d},\mathbf{d}[1])=\dim R_\mathbf{d}.
\end{equation}
Given two arbitrary integer vectors $\mathbf{e},\mathbf{d}\in \ZZ^{Q_0}$ it is hence natural to define the \emph{Euler form of Q} as a bilinear form
$\langle-,-\rangle_Q:\ZZ^{Q_0}\times\ZZ^{Q_0}\rightarrow \ZZ$
 given by
$$
\langle\mathbf{e},\mathbf{d}\rangle:=\sum_{i\in Q_0}e_id_i-\sum_{\alpha\in Q_1}e_{s(\alpha)}d_{t(\alpha)}.
$$
From \eqref{Eq:Euler} above, we immediately get
\begin{equation}\label{Eq:EulForm}
\dim \Hom_Q(N,M)- \dim \Ext^1_Q(N,M)=\langle\mathbf{dim}\,N,\mathbf{dim\,M}\rangle.
\end{equation}
Formula~\eqref{Eq:EulForm} is called the homological interpretation of the Euler form. 

The category $\textrm{Rep}_K(Q)$ is Krull--Schmidt, in the sense that every finite dimensional $Q$--representation can be decomposed in an essentially unique way as direct sum of its  indecomposable direct summands. It is known that a $KQ$--module $M$ is indecomposable if and only if $\textrm{End}_Q(M)\simeq K$ (see e.g. \cite[Corollary VII.5.14]{ASS}  or \cite[Corollary 4.2, Example 4.2]{Ralf}). A famous theorem of P.~Gabriel \cite{Gabriel} (see also \cite{BGP} for a different proof and \cite[Section~VII.5]{ASS} for a survey) states that a quiver $Q$ admits only a finite number of indecomposable representations if and only if $Q$ is a Dynkin quiver.  As a consequence, we get that if $Q$ is Dynkin, then $R_\mathbf{d}$ consists of finitely many $G_\mathbf{d}$--orbits, and hence, since such orbits are connected and locally closed, there is one orbit which is dense. The corresponding representation is called \emph{the generic representation} of dimension vector 
$\mathbf{d}$ and we denote it by $\tilde{M}_\mathbf{d}$. The stabilizer of a point  $M\in R_\mathbf{d}$ is  
$$
\textrm{dim Stab}_{G_\mathbf{d}}(M)=\textrm{Aut}_Q(M)
$$
where $\textrm{Aut}_Q(M)$ denotes the open subvariety of $\Hom_Q(M,M)$ consisting of invertible morphisms. In particular, $\textrm{dim }\textrm{Aut}_Q(M)=\textrm{dim }\Hom_Q(M,M)$. In view of  \eqref{Eq:DimRd} and \eqref{Eq:EulForm}, we hence get
$$
\textrm{codim}_{R_\mathbf{d}}\,(G_\mathbf{d}\cdot M)=\textrm{dim}\,R_\mathbf{d}-\textrm{dim Stab}_{G_\mathbf{d}}(M)=\textrm{dim}\,\Ext^1_Q(M,M).
$$
We conclude that the orbit of $M$ is dense in $R_\mathbf{d}$ if and only if $\Ext^1_Q(M,M)=0$. A representation $M$ such that $\Ext^1_Q(M,M)=0$ is called \emph{rigid}. We thus reformulate the above remark by: for Dynkin quivers a representation is generic if and only if it is rigid. 

The second part of Gabriel's theorem recalled above, states that if $Q$  is a Dynkin quiver, then  the dimension vector restricts to a bijection between the indecomposable $Q$--representations  and the positive roots of the root system associated with the underlying Dynkin graph of Q. Such dimension vector $\mathbf{d}$ satisfies the equation $\langle\mathbf{d},\mathbf{d}\rangle=1$. In particular, for such a dimension vector, the generic representation in $R_\mathbf{d}$ is indecomposable. 

\subsubsection{Almost split sequences}
We conclude this section by recalling the fundamental notions of \emph{irreducible morphism} and of \emph{Auslander--Reiten quiver} of a quiver $Q$ (see e.g \cite{ARS}, \cite{ASS}). A morphism $f:M\rightarrow N$ between two $Q$--representations is called \emph{irreducible} if $f$ is neither mono--spli, nor epi--split (i.e. it does not admit neither a left nor a right inverse) and whenever there is a factorization $f=f_2\circ f_1$, then either $f_1$ is  epi--split  or $f_2$ is  epi--split . It can be shown that if  $f$ is an irreducible monomorphism, then $M$ is indecomposable and, dually, if $f$ is an irreducible epimorphism then $N$ is indecomposable. A short exact sequence 
$$
\xymatrix{
\delta:&0\ar[r]&N\ar^f[r]&E\ar^g[r]&M\ar[r]&0
}
$$ 
is called \emph{almost split} if $f$ and $g$ are irreducible (in particular both $N$ and $M$ are indecomposable) \cite[Proposition~V.5.9]{ARS}. Almost split sequences are characterized by the following property, which will be used later: the short exact sequence $\delta$ is almost split if and only if it is non--split, both $N$ and $M$ are indecomposable and for any morphism $h:X\rightarrow M$ which is not a split--epimorphism, there exists $t:X\rightarrow E$ such that $h=g\circ t$. In particular, if $\delta$ is an almost split sequence, and $M$ is not a direct summand of $X$, then $[X,E]=[X,N\oplus M]$. 
Dually, it can be shown that $\delta$ is almost split if and only if it is non--split, both $N$ and $M$ are indecomposable and for any morphism $h:N\rightarrow X$ which is not a split--monomorphism, there exists $t:E\rightarrow X$ such that $h=t\circ f$. A fundamental result of Auslander and Reiten \cite[Theorem~V.1.15]{ARS} states that for every indecomposable $M$ which is not projective, there exists an almost split sequence $\delta$ as above (ending in $M$), which is unique up to isomorphism \cite[Theorem~V.1.16]{ARS}. One can show that almost split sequences are \emph{rigid}, in the sense that they are uniquely determined (up to isomorphisms) by the three modules $N$, $E$ and $M$ \cite[Proposition~V.2.3]{ARS}. 

In our situation, which is the case of an hereditary algebra of finite representation type, the (indecomposable) module $N$ is obtained in the following way from the (indecomposable non--projective) module $M$: we apply the contra-variant functor $\Hom_Q(-,A): Rep(Q)\rightarrow Rep(Q^{op})$ to the minimal projective resolution \eqref{Eq:MinProjRes} of $M$ and, since $\Hom_Q(M,A)=0$, we get
$$
\xymatrix@C=10pt{
0\ar[r]& \Hom_Q(P_0^M, A)\ar[r]&\Hom_Q(P_1^M,A)\ar[r]&\Ext^1_Q(M,A)\ar[r]&0
}
$$
Now we apply the standard K--duality $D:=\Hom_K(-,K):Rep(Q^{op})\rightarrow Rep(Q)$ and get
\begin{equation}\label{Eq:MinInjResTau1}
\xymatrix@C=10pt{
0\ar[r]& D\Ext^1_Q(M,A)\ar[r]&D\Hom_Q(P_1^M,A)\ar[r]&D\Hom_Q(P_0^M, A)\ar[r]&0.
}
\end{equation}
The composition of endofunctors $D\circ \Hom(-,A)$ of  $\textrm{Rep}_K(Q)$ induces an equivalence $\nu:\textrm{Proj}(Q)\rightarrow \textrm{Inj}(Q)$ between the full subcategory $\textrm{Proj}(Q)$ of projective modules and the full subcategory $\textrm{Inj}(Q)$ of injective modules. The functor $\nu$ is called the \emph{Nakayama} functor and it is characterized by the following property 
\begin{equation}\label{Eq:PropNu}
\nu(P_j)=I_j
\end{equation} 
for all $j\in Q_0$. The short exact sequence \eqref{Eq:MinInjResTau1} is nothing but the minimal injective resolution of the $A$--module $D\Ext^1_Q(M,A)$. We claim that $N\simeq D\Ext^1_Q(M,A)$. To convince ourselves that this is true, we notice that, since $P_0^M$ is projective, there is a non--degenerate bilinear map
$$
\chi: \Hom_Q(P_0^M,M)\times \Hom_Q(M,\nu(P_0^M))\rightarrow K
$$
(to see this, consider the linear map $\omega_M:D\Hom_Q(P_0^M,M)\rightarrow \Hom_Q(M,\nu(P_0^M))$ given by $\omega_M(\psi)(n)(f):=\psi(p\mapsto f(p)n)$. This map is an isomorphism \cite[Proposition~II.4.4(b)]{ARS}). Let $h\in  \Hom_Q(M,\nu(P_0^M))$ such that $\chi(\pi_M, h)\neq 0$. Then the pull--back of \eqref{Eq:MinInjResTau1} by $h$ is an almost split sequence ending in $M$ and hence it is isomorphic to $\delta$. This shows that $N\simeq D\Ext^1_Q(M,A)$. It is customary to denote $\tau M=N=D\Ext^1_Q(M,A)$ and the almost split sequence ending in $M$ becomes
\begin{equation}\label{Eq:AlmostSplit}
\xymatrix{
\delta:&0\ar[r]&\tau M\ar^f[r]&E\ar^g[r]&M\ar[r]&0.
}
\end{equation}

To summarize, we have seen that given the minimal projective resolution \eqref{Eq:MinProjRes} of $M$ then the minimal injective resolution of $\tau M$ is:
\begin{equation}\label{Eq:MinInjResTau}
\xymatrix@C=10pt{
0\ar[r]& \tau M\ar[r]&\nu(P_1^M)\ar[r]&\nu(P_0^M)\ar[r]&0.
}
\end{equation}
This fact will be used in section~\ref{Sec:GVect}. 

To conclude this sub--section we briefly recall the construction of the Auslander--Reiten quiver of a Dynkin quiver Q. This is a quiver, denoted with $\Gamma_Q$, whose vertices are isoclasses of indecomposable $Q$--representations, and there is an arrow $[M]\rightarrow [N]$ if there exists an irreducible morphism $f:M\rightarrow N$ (see e.g. \cite[Section~IV.4]{ASS}). If $Q$ is a connected Dynkin quiver, its AR--quiver is connected and it is explicitely described as follows. Let $\hat{Q}$ be the infinite quiver obtained by ``repeating'' the opposite quiver $Q^{op}$: its vertices are pairs $(i,k)$ with $i\in Q_0$ and $k\in \ZZ$; there is an arrow $(i;k)\rightarrow (j;\ell)$ if either 1) $k=\ell$ and there is an arrow $j\rightarrow i$ in $Q$ or 2) if $j=k+1$ and there is an arrow $i\rightarrow j$ in Q.  Let $\tau:\hat{Q}_0\rightarrow \hat{Q}_0$ defined by $\tau(i;k):=(i;k-1)$. The pair $(\hat{Q},\tau)$ is called a translation quiver. Every vertex of $\hat{Q}$ lies in a unique $\tau$--orbit. The AR--quiver of Q, is obtained from $\hat{Q}$ as follows: consider the set of vertices $(i;0)$ in the zero copy of $Q^{op}$. Identify each such vertex $(i;0)$ with $P_i$. Let $k(i)$ be the unique index such that $\tau^{-k(i)}P_i$ is injective.  Then the AR--quiver is the full subquiver of $\hat{Q}$ containing all vertices $\{(i;k)|\,0\leq k\leq k(i)\}$. Given a vertex $(i;k)$ of $\Gamma_Q$ we denote by $M(i;k)$ the corresponding indecomposable module. The almost split sequences have the form
\begin{equation}\label{Eq:ARQuiverVertices}
\xymatrix@C=15pt{
0\ar[r]&M(i;k)\ar[r]&\displaystyle{
\bigoplus_{j\rightarrow i\in Q_1} M(j;k)\oplus\bigoplus_{i\rightarrow j\in Q_1} M(j;k+1)}\ar[r]&M(i;k+1)\ar[r]&0
}
\end{equation}
I recommend the book \cite{Ralf} for more details about the construction of Auslander-Reiten quivers of Dynkin quivers. 
\section{Quiver Grassmannians}\label{Sec:QG}
Let $Q$ be a finite quiver  with $n$ vertices and let $A=KQ$ be the associated (complex) path algebra. Given a dimension vector $\mathbf{d}$, an  $A$--module $M\in R_\mathbf{d}$ and another dimension vector $\mathbf{e}$ such that $\mathbf{d-e}\in\ZZ^{Q_0}_{\geq0}$, in this section we define the projective variety $Gr_\mathbf{e}(M)$ whose points parametrize submodules of $M$ of dimension vector $\mathbf{e}$.  We need to ask ourselves ``what is a submodule?''. This question has two answers: first of all, a submodule is a collection $(N_i)_{i\in Q_0}$ of vector subspaces $N_i\subseteq M_i$ such that $M_\alpha(N_i)\subseteq N_j$ for every arrow $\alpha:i\rightarrow j$ of Q. On the other hand, a submodule $N\subset M$ can be tought of as an $A$--module $N$ endowed with an injective $A$--morphism $\iota: N\rightarrow M$.  The two answers provide two different realizations of $Gr_\mathbf{e}(M)$.

\subsection{First realization: universal quiver Grassmannians}
Let $\mathbf{d}$ and $\mathbf{e}$ be two dimension vector for $Q$ such that $e_i\leq d_i$ for all $i\in Q_0$. Let us consider the product of usual Grassmannians of  vector spaces over the field K of complex numbers: $Gr_\mathbf{e}(\mathbf{d}):=\prod_{i\in Q_0}Gr_{e_i}(K^{d_i})$. Given $M\in R_\mathbf{d}(Q)$ and a point $N\in Gr_\mathbf{e}(\mathbf{d})$, the condition that $N$ defines a sub-representation of $M$ is $M_\alpha(N_{s_{\alpha}})\subseteq N_{t_{\alpha}}$. We hence consider the incidence variety inside $Gr_\mathbf{e}(\mathbf{d})\times R_\mathbf{d}$ given by:
\begin{equation}\label{Eq:DefUnivQG}
Gr_{\mathbf{e}}^{Q}(\mathbf{d}):=\{(N,M)\in Gr_\mathbf{e}(\mathbf{d})\times R_\mathbf{d}|\, M_\alpha(N_{s_{\alpha}})\subseteq N_{t_{\alpha}},\,\forall \alpha\in Q_1\}.
\end{equation}
The variety $Gr_{\mathbf{e}}^{Q}(\mathbf{d})$ is called the \emph{universal quiver Grassmannian} associated with $\mathbf{e}$, $\mathbf{d}$ and $Q$. The two projections $p_1:Gr_\mathbf{e}(\mathbf{d})\times R_\mathbf{d}\rightarrow Gr_\mathbf{e}(\mathbf{d})$ and $p_2:Gr_\mathbf{e}(\mathbf{d})\times R_\mathbf{d}\rightarrow R_\mathbf{d}$ induce two maps 
$$
\xymatrix{
&Gr_\mathbf{e}^Q(\mathbf{d})\ar_{p_\mathbf{e}}@{->}[dl]\ar^{p_\mathbf{d}}@{->}[dr]&\\
Gr_\mathbf{e}(\mathbf{d})&&R_\mathbf{d}
}
$$
The group $G_\mathbf{d}$ acts diagonally on $Gr_{\mathbf{e}}^{Q}(\mathbf{d})$ and the two maps $p_\mathbf{e}$ and $p_\mathbf{d}$ are $G_\mathbf{d}$--equivariant. Since $Gr_\mathbf{e}(\mathbf{d})$ is a projective variety, the map $p_2$ is proper; moreover $Gr_\mathbf{e}^Q(\mathbf{d})$ is closed in $Gr_\mathbf{e}(\mathbf{d})\times R_\mathbf{d}$ and the closed embedding $Gr_\mathbf{e}^Q(\mathbf{d})\rightarrow Gr_\mathbf{e}(\mathbf{d})\times R_\mathbf{d}$ is proper. It follows that the map $p_\mathbf{d}$ is proper, being the composition of two proper maps.  Its image is the \emph{closed} subset of $R_\mathbf{d}$ consisting of those points $M\in R_\mathbf{d}$ which admit a sub-representation of dimension vector $\mathbf{e}$. The \emph{quiver Grassmannian} $Gr_\mathbf{e}(M)$ associated with a point $M\in R_\mathbf{d}$ is defined as the fiber of $p_\mathbf{d}$ over $M$. 

As shown in \cite[section~2.2]{CFR}, the map $p_\mathbf{e}$ realizes $Gr_\mathbf{e}^Q(\mathbf{d})$ as the total space of an homogeneous vector bundle over $Gr_\mathbf{e}(\mathbf{d})$ of rank $\sum_{\alpha\in Q_1}d_{s(\alpha)}d_{t(\alpha)}+e_{s(\alpha)}e_{t(\alpha)}-e_{s(\alpha)}d_{t(\alpha)}$. In particular, $Gr_\mathbf{e}^Q(\mathbf{d})$ is smooth and irreducible of dimension
$$
\textrm{dim }Gr_\mathbf{e}^Q(\mathbf{d})=\langle\mathbf{e},\mathbf{d}-\mathbf{e}\rangle+\textrm{dim }R_\mathbf{d}.
$$
By upper--semicontinuity of the fiber dimension, we see that for any point $M$ in the image of $p_\mathbf{d}$ we have
\begin{equation}\label{Eq:IneqDimQuivGrass}
\textrm{dim }Gr_\mathbf{e}(M)\geq \textrm{dim }Gr_\mathbf{e}^Q(\mathbf{d})-\textrm{dim }Im(p_\mathbf{d})\geq \langle\mathbf{e},\mathbf{d}-\mathbf{e}\rangle.
\end{equation}
Moreover, since $p_\mathbf{d}$ is $G_\mathbf{d}$--equivariant, the image of $p_\mathbf{d}$ contains a dense orbit of a point whose fiber has dimension precisely $\textrm{dim }Gr_\mathbf{e}^Q(\mathbf{d})-\textrm{dim }Im(p_\mathbf{d})$. If such a dense orbit is the orbit of the rigid representation $\tilde{M}_\mathbf{d}$ of dimension vector $\mathbf{d}$ (see Corollary~\ref{Cor:Criterion NonEmpty}  for a criterion for this), then $p_\mathbf{d}$ is surjective and hence
$$
\textrm{dim }Gr_\mathbf{e}(\tilde{M}_\mathbf{d})=\langle\mathbf{e},\mathbf{d}-\mathbf{e}\rangle.
$$
(For more properties of $Gr_\mathbf{e}(\tilde{M}_\mathbf{d})$ see Theorem~\ref{Thm:RigQuivGras} below.) 

Let $D:Rep(Q)\rightarrow Rep(Q^{op})$ be the standard duality which associates to a $Q$--representation $M$ its linear dual $DM$. There is an isomorphism of projective varieties
\begin{equation}\label{Eq:DualityGrass}
\zeta:Gr_\mathbf{e}(M)\rightarrow Gr_{\mathbf{d}-\mathbf{e}}(DM):\, L\mapsto \textrm{Ann}_M(L):=\{\varphi\in DM|\, \varphi(L)=0\}
\end{equation}
where $\mathbf{d}:=\textbf{dim }M$ and $\mathbf{e}\in\ZZ_{\geq 0}^{Q_0}$ is any dimension vector. 

If $Q=Q'\cup Q''$ is a disjoint union of two sub-quivers, then any $Q$--representation $M$ is a direct sum $M=M'\oplus M''$ of a representation M' of $Q'$ and of a representation $M''$ of $Q''$. Any quiver Grassmannian $Gr_\mathbf{e}(M)$ is a product of the form:
\begin{equation}\label{Eq:QuivGrassProduct}
Gr_\mathbf{e}(M)=Gr_{\mathbf{e}'}(M')\times Gr_{\mathbf{e}''}(M'').
\end{equation}
for some $\mathbf{e}'\in \ZZ_{\geq0}^{Q_0'}$ and $\mathbf{e}''\in \ZZ_{\geq0}^{Q_0''}$.
\subsection{Second realization: quiver Grassmannians as geometric quotients and stratification}\label{Sec:Stratification}
Following Caldero and Reineke \cite{CR}, one  can realize quiver Grassmannians as geometric quotients.
Recall the two vector spaces $\Hom(\mathbf{e},\mathbf{d})$ and $\Hom(\mathbf{e},\mathbf{d}[1])$ of section~\ref{Sec:1} and the linear map $\Phi_L^M:\Hom(\mathbf{e},\mathbf{d})\rightarrow \Hom(\mathbf{e},\mathbf{d}[1])$ associated with $L\in R_\mathbf{e}(Q)$ and $M\in R_\mathbf{d}(Q)$. Let us assume that $e_i\leq d_i$ for all $i\in Q_0$. Given $M\in R_\mathbf{d}(Q)$ the algebraic map
$$
\Phi^M: R_\mathbf{e}\times \Hom(\mathbf{e},\mathbf{d})\rightarrow \Hom(\mathbf{e},\mathbf{d}[1]): (L,f)\mapsto \Phi^M_L(f)
$$
is used to define the following closed subvariety of $R_\mathbf{e}\times \Hom(\mathbf{e},\mathbf{d})$:
$$
\Hom(\mathbf{e},M):=\{(L,f)\in  R_\mathbf{e}\times \Hom(\mathbf{e},\mathbf{d})|\, \Phi^M_L(f)=0\}.
$$
Inside $\Hom(\mathbf{e},\mathbf{d})$ there is the open (and dense) subvariety $\Hom^0(\mathbf{e},\mathbf{d})$ consisting of collections of injective linear maps; the induced open subvariety  $\Hom^0(\mathbf{e},M):=\Hom(\mathbf{e},M)\cap \left(R_\mathbf{e}\times \Hom^0(\mathbf{e},\mathbf{d})\right)$ is of particular importance for us. Indeed the map 
$$
\phi: \Hom^0(\mathbf{e},M)\rightarrow Gr_\mathbf{e}(M):\, (L,f)\mapsto f(L)
$$
is surjective and each fiber of $\phi$ is a free orbit for the algebraic group $G_\mathbf{e}=\prod_{i\in Q_0}GL(e_i)$ (see \cite[Lemma~2]{CR}). This implies that the quiver Grassmannian $Gr_\mathbf{e}(M)$ is a geometric quotient:
\begin{equation}\label{Eq:QuotQuivGrass}
Gr_\mathbf{e}(M)\simeq \Hom^0(\mathbf{e},M)/G_\mathbf{e}.
\end{equation}
With this formulation, a point $p$ of $Gr_\mathbf{e}(M)$ is represented (up to the $G_\mathbf{e}$--action) by a pair $(L,\iota)$ where $L\in R_\mathbf{e}(Q)$ and $\iota:L\rightarrow M$ is an injective homomorphism of $Q$--representations; in this case we use the notation $p=[(L,\iota)]$. As shown by Caldero and Reineke, formula \eqref{Eq:QuotQuivGrass} implies the following description of the (scheme-theoretic) tangent space $T_{p}(Gr_\mathbf{e}(M))$ at a point $p$ of the quiver Grassmannian.
\begin{theorem}
Given $M\in R_\mathbf{d}(Q)$ and a point $p=[(L,\iota)]\in Gr_\mathbf{e}(M)$, there is an isomorphism of vector spaces
$$
T_{p}(Gr_\mathbf{e}(M))\simeq \Hom_Q(L,M/\iota(L))
$$ 
where $T_{p}(Gr_\mathbf{e}(M))$ denotes the (scheme--theoretic) tangent space 
at $p$ of $Gr_\mathbf{e}(M)$. 
\end{theorem}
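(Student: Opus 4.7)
The plan is to exploit the geometric quotient description $Gr_\mathbf{e}(M)\simeq \Hom^0(\mathbf{e},M)/G_\mathbf{e}$ from \eqref{Eq:QuotQuivGrass}. Since $G_\mathbf{e}$ acts freely on $\Hom^0(\mathbf{e},M)$, the quotient map is a principal $G_\mathbf{e}$--bundle and hence smooth; its differential at $(L,\iota)$ is therefore surjective on scheme--theoretic tangent spaces with kernel equal to the tangent space to the orbit $G_\mathbf{e}\cdot(L,\iota)$. The entire proof then reduces to the linear--algebra task of identifying the corresponding quotient of tangent spaces with $\Hom_Q(L,M/\iota(L))$.

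First I would compute both tangent spaces explicitly. Since $\Hom^0(\mathbf{e},M)$ is cut out inside the open subset $R_\mathbf{e}\times\Hom^0(\mathbf{e},\mathbf{d})$ by the vanishing of the polynomial map $(L,f)\mapsto(M_\alpha f_{s(\alpha)}-f_{t(\alpha)}L_\alpha)_\alpha$, linearizing at $(L,\iota)$ shows that the tangent vectors are precisely the pairs $(\dot L,\dot f)\in R_\mathbf{e}\times\Hom(\mathbf{e},\mathbf{d})$ satisfying
$$
M_\alpha\dot f_{s(\alpha)}-\dot f_{t(\alpha)}L_\alpha=\iota_{t(\alpha)}\dot L_\alpha \qquad (\alpha\in Q_1).
$$
Differentiating the action $h\cdot(L,\iota)=(hLh^{-1},\iota h^{-1})$ at the identity identifies the orbit tangent space as the set of pairs of the form $\bigl(\xi_{t(\alpha)}L_\alpha-L_\alpha\xi_{s(\alpha)},\,-\iota\xi\bigr)_\alpha$, with $\xi=(\xi_i)_i$ ranging over $\prod_{i\in Q_0}\Hom_K(L_i,L_i)$.

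Next I would exhibit the isomorphism explicitly. Define a linear map $\Psi\colon T_{(L,\iota)}\Hom^0(\mathbf{e},M)\to \Hom_Q(L,M/\iota(L))$ by $\Psi(\dot L,\dot f)_i:=\pi_i\circ\dot f_i$, where $\pi_i\colon M_i\to M_i/\iota_i(L_i)$ is the canonical projection; the displayed equation above guarantees that this family commutes with the arrows modulo $\iota(L)$, so $\Psi$ indeed lands in $\Hom_Q(L,M/\iota(L))$. Surjectivity is a straightforward lifting argument: choose any $K$--linear lifts $\dot f_i\colon L_i\to M_i$ of the components of a given $\psi$; the quantity $M_\alpha\dot f_{s(\alpha)}-\dot f_{t(\alpha)}L_\alpha$ then takes values in $\iota_{t(\alpha)}(L_{t(\alpha)})$ because $\psi$ is a $Q$--morphism, and injectivity of $\iota_{t(\alpha)}$ produces a unique $\dot L_\alpha$ satisfying the required constraint.

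The step I expect to be the main obstacle is checking that $\ker\Psi$ coincides exactly with the orbit tangent space, so that $\Psi$ descends to an isomorphism on the quotient. A tangent vector lies in $\ker\Psi$ precisely when $\dot f_i(L_i)\subseteq\iota_i(L_i)$ for each $i$; injectivity of $\iota$ then produces a unique $\xi_i\in\Hom_K(L_i,L_i)$ with $\dot f_i=-\iota_i\xi_i$. A short direct computation, using the relation $M_\alpha\iota_{s(\alpha)}=\iota_{t(\alpha)}L_\alpha$ (which holds since $\iota$ itself is a $Q$--morphism), transforms the $\Phi^M$--constraint into $\iota_{t(\alpha)}\dot L_\alpha=\iota_{t(\alpha)}(\xi_{t(\alpha)}L_\alpha-L_\alpha\xi_{s(\alpha)})$, and a final appeal to injectivity of $\iota_{t(\alpha)}$ yields $\dot L_\alpha=\xi_{t(\alpha)}L_\alpha-L_\alpha\xi_{s(\alpha)}$. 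This exhibits $(\dot L,\dot f)$ as the infinitesimal action of $\xi$, matching the orbit tangent space exactly and completing the proof.
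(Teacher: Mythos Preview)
Your argument is correct and is precisely the computation that the paper points to but does not itself carry out: the paper states the theorem without proof, attributing it to Caldero--Reineke and indicating only that it follows from the geometric quotient description \eqref{Eq:QuotQuivGrass}. Your proposal fills in exactly those details---linearizing the defining equations of $\Hom^0(\mathbf{e},M)$, differentiating the free $G_\mathbf{e}$--action, and identifying the quotient with $\Hom_Q(L,M/\iota(L))$---so there is no genuine divergence of approach, only a difference in level of detail.
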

\begin{remark}
The tangent space formula only holds at level of schemes. The usual example in this sense is given by considering a regular  (indecomposable) representation $R_2$ of the Kronecker quiver of quasi--lenght 2 whose dimension vector is $(2,2)$. The quiver Grassmannian $Gr_{(1,1)}(R_2)$ is a point, but the tangent space has dimension one. 
\end{remark}
Formula \eqref{Eq:QuotQuivGrass} allows to define a stratification of $Gr_\mathbf{e}(M)$ as follows  (see \cite[Section~2.3]{CFR} for more details): let $p$ be the projection from $\Hom^0_Q(\mathbf{e},M)$ to $R_\mathbf{e}$; its fiber over a point $N\in R_\mathbf{e}$ is the space of injective linear maps $\Hom^0_Q(N,M)$. For each isoclass $[N]$ in $R_\mathbf{e}$ we can consider the subset $\mathcal{S}_{[N]}$ of $Gr_\mathbf{e}(M)$ corresponding under the previous isomorphism to $p^{-1}(G_\mathbf{e}\cdot N)/G_\mathbf{e}$. In \cite[Lemma~2.4]{CFR} it is shown that $\mathcal{S}_{[N]}$ is a locally closed subset of dimension
$$
\textrm{dim }\mathcal{S}_{[N]}=[N,M]-[N,N].
$$
In particular, a quiver Grassmannian  $Gr_\mathbf{e}(M)$ admits a finite (since $Q$ is Dynkin) stratification
$$
Gr_\mathbf{e}(M)=\coprod_{[N]}\mathcal{S}_{[N]}.
$$
The irreducible components of $Gr_\mathbf{e}(M)$ are hence closure of some strata which we called the \emph{generic sub--representation types} of $Gr_\mathbf{e}(M)$ (see \cite{CFR3}).

\section{Degeneration of $Q$--representations: Bongartz's theorem and applications to quiver Grassmannians}\label{Sec:Degeneration}
Given $M,N\in R_\mathbf{d}$,  $M$ is said to \emph{degenerate} to $N$ and in this case it is customary to write $M\leq_{\textrm{deg}} N$, if the closure of the orbit of $M$ contains $N$: 
$$
M\Leq N\;\;\;\stackrel{def}{\Longleftrightarrow}\;\;\; \overline{G_\mathbf{d}\cdot M}\supseteq G_\mathbf{d}\cdot N.
$$
For arbitrary finite--dimensional algebras, it is a hard problem to control such a notion. On the other hand, for algebras of finite representation type (i.e. admitting a finite number of indecomposable modules) the following very useful characterization holds: 
\begin{equation}\label{Eq:DegZwara}
\begin{array}{ccccc}
M\Leq N&\Longleftrightarrow&[X,M]\leq [X,N]&\Longleftrightarrow&[M,X]\leq [N,X].\\
&&\forall\;X\in\textrm{Rep}(Q)&&\forall\;X\in\textrm{Rep}(Q)
\end{array}
\end{equation}
For Dynkin quivers this result was obtained by Bongartz \cite{B} (but many other people should be mentioned here: e.g. Riedtmann \cite{Ried}, Abeasis-Del Fra \cite{AdF1, AdF2, AdF3}). The surprising generalization to  any algebra of finite representation type was obtained by  Zwara \cite{Z} (the second equivalence follows from Auslander--Reiten theory \cite[section~2.2]{Z}, \cite{AR85}). 

In the analysis of the geometry of quiver Grassmannians developed in collaboration with Reineke and Feigin, the following result of Bongartz played a prominent r\^ole (it is stated below for Dynkin quivers but it holds in full generality): in order to formulate it we need to recall the notion of a generic quotient from Bongartz's paper \cite[Section~2.4]{B}. Suppose that  $U\in R_\mathbf{e}$ and $M\in R_\mathbf{d}$ are given, and also that there exists a monomorphism $\iota:U\rightarrow M$; in particular $\mathbf{d-e}\in\ZZ^{Q_0}_{\geq0}$ is a dimension vector. The set of all possible quotients of $M$ by $U$ is an  irreducible constructible  subset of $R_{\mathbf{d}-\mathbf{e}}$ which is $G_{\mathbf{d}-\mathbf{e}}$--invariant. In particular, since $Q$ is Dynkin, it is the closure of a $G_{\mathbf{d}-\mathbf{e}}$--orbit of a point $S$ called the \emph{generic quotient} of $M$ by $U$. 

\begin{theorem}\label{Thm:Bongartz}(\cite[Theorem~2.4]{B})
Let $M,N\in R_\mathbf{d}$ such that $M\Leq N$. Let $U$ be a representation such that $[U,M]=[U,N]$ then the following holds: 
\begin{enumerate}
\item if $U$ embeds into $N$,  it embeds into $M$ too;
\item in this case every quotient of $N$ by $U$ is  a degeneration of the generic quotient of $M$ by U.
\end{enumerate}
\end{theorem}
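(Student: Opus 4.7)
The plan is to exploit the upper semicontinuity of the hom-dimension $[U,-]$ on $R_\mathbf{d}$ together with the irreducibility of the orbit closure $Z:=\overline{G_\mathbf{d}\cdot M}$. Because $[U,-]$ can only increase under degeneration (see \eqref{Eq:DegZwara}), we have $[U,M']\geq [U,M]$ for every $M'\in Z$, while upper semicontinuity tells us that strict inequality cuts out a Zariski closed subset of $R_\mathbf{d}$. Intersecting its complement with $Z$ yields the open subset $Z_0:=\{M'\in Z\mid [U,M']=[U,M]\}$, which contains the dense orbit $G_\mathbf{d}\cdot M$ and, in view of the hypothesis $[U,M]=[U,N]$, also the point $N$.

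Over $Z_0$ the family of kernels $\ker \Phi^{M'}_U=\Hom_Q(U,M')\subseteq \Hom(\mathbf{e},\mathbf{d})$ has constant dimension $[U,M]$ and assembles into an algebraic vector subbundle $\mathcal{H}\to Z_0$ of the trivial bundle $Z_0\times\Hom(\mathbf{e},\mathbf{d})$, with projection $\pi:\mathcal{H}\to Z_0$. Componentwise injectivity of $f$ is an open condition (non-vanishing of appropriate minors), so the locus $\mathcal{H}^0\subseteq \mathcal{H}$ of pairs $(f,M')$ with $f:U\hookrightarrow M'$ is open in $\mathcal{H}$. Since the projection of a vector bundle is an open map, the image $Z_0^+:=\pi(\mathcal{H}^0)\subseteq Z_0$ is $G_\mathbf{d}$--invariant, open in $Z$, and contains $N$ by hypothesis. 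Being a non-empty open subset of the irreducible variety $Z$, $Z_0^+$ must meet (hence contain, by $G_\mathbf{d}$--invariance) the dense orbit $G_\mathbf{d}\cdot M$; in particular $M\in Z_0^+$, which is precisely statement (1).

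For statement (2), note that $\mathcal{H}^0|_{Z_0^+}$ is irreducible, being an open subset of a vector bundle over an irreducible base. The assignment $(f,M')\mapsto [M'/f(U)]$ defines (after choosing local trivializations of the quotient) an algebraic map $\psi:\mathcal{H}^0|_{Z_0^+}\to R_{\mathbf{d}-\mathbf{e}}$ whose image is $G_{\mathbf{d}-\mathbf{e}}$--invariant. The preimage $\pi^{-1}(G_\mathbf{d}\cdot M)$ is open and dense in $\mathcal{H}^0|_{Z_0^+}$, so the closure of $\psi(\pi^{-1}(G_\mathbf{d}\cdot M))$ equals $\overline{\psi(\mathcal{H}^0|_{Z_0^+})}$; by definition the former is $\overline{G_{\mathbf{d}-\mathbf{e}}\cdot S}$, where $S$ is the generic quotient of $M$ by $U$, while the latter contains every quotient of $N$ by $U$, since $N\in Z_0^+$. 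The main technical points that need to be pinned down are the local freeness of $\mathcal{H}\to Z_0$ (a family of kernels of constant fiber dimension over a reduced base) and the algebraicity of $\psi$; both are routine but require a moment's care, and the rest of the argument is essentially a dimension-count dressed up as a bundle-projection.
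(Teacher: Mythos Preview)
The paper does not supply its own proof of this theorem: it is quoted verbatim as \cite[Theorem~2.4]{B} and used as a black box, with the next line being ``We immediately get an interesting corollary\ldots''. So there is nothing in the paper to compare your argument against.

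That said, your sketch is essentially the standard argument (and close in spirit to Bongartz's original). The logic for part~(1) is sound: constancy of $[U,-]$ on the open locus $Z_0\subseteq Z=\overline{G_\mathbf{d}\cdot M}$ makes $\mathcal{H}\to Z_0$ a vector bundle over a reduced base, the injective locus $\mathcal{H}^0$ is open, and the bundle projection is flat hence open, so $Z_0^+$ is open in the irreducible variety $Z$ and must contain the dense orbit. For part~(2) your outline is correct but, as you yourself flag, the map $\psi$ is only locally defined as a morphism to $R_{\mathbf{d}-\mathbf{e}}$; what is globally well-defined is the $G_{\mathbf{d}-\mathbf{e}}$--orbit of the quotient. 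One clean way to make the argument global is to pass to the $G_\mathbf{d}$--equivariant variety of pairs $(M',g)$ with $g\in G_\mathbf{d}$ carrying a fixed coordinate subspace $K^\mathbf{e}\subset K^\mathbf{d}$ onto $f(U)$, which gives an honest morphism to $R_{\mathbf{d}-\mathbf{e}}$; the closure statement then follows exactly as you indicate. With that bookkeeping in place the proof is complete.
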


We immediately get an interesting corollary which says that the generic sub-representation of a generic representation is generic and its generic quotient is generic. This was noticed also in the paper of Schofield \cite{S}, but the proof that I give here relies entirely on Bongartz's theorem.  Here is the precise statement:
\begin{corollary}\label{Cor:EmbeddingGeneric}
Let $\tilde{M}_\mathbf{d}$ be a rigid representation of dimension vector $\mathbf{d}$. Let $N\subseteq \tilde{M}_\mathbf{d}$ be a sub-representation of dimension vector $\mathbf{e}$. Then the rigid representation $\tilde{M}_\mathbf{e}$ of dimension vector $\mathbf{e}$ embeds into $\tilde{M}_\mathbf{d}$ with generic quotient $\tilde{M}_\mathbf{d-e}$. In particular, there is a short exact sequence
\begin{equation}\label{Eq:GenericSequence}
\xymatrix{
0\ar[r]&\tilde{M}_\mathbf{e}\ar[r]&\tilde{M}_\mathbf{d}\ar[r]&\tilde{M}_\mathbf{d-e}\ar[r]&0}
\end{equation}
\end{corollary}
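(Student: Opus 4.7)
The plan is to deduce both halves of the statement from Bongartz's Theorem~\ref{Thm:Bongartz} applied to the degeneration $\tilde{M}_\mathbf{d}\Leq \tilde{M}_\mathbf{e}\oplus \tilde{M}_\mathbf{d-e}$ with test module $U=\tilde{M}_\mathbf{e}$. The degeneration itself is immediate since $\tilde{M}_\mathbf{d}$ has dense orbit in $R_\mathbf{d}$, and $\tilde{M}_\mathbf{e}$ obviously embeds as a direct summand into $\tilde{M}_\mathbf{e}\oplus \tilde{M}_\mathbf{d-e}$. The only hypothesis requiring genuine work is the Hom equality $[\tilde{M}_\mathbf{e},\tilde{M}_\mathbf{d}]=[\tilde{M}_\mathbf{e},\tilde{M}_\mathbf{e}\oplus \tilde{M}_\mathbf{d-e}]$, and this is precisely where the assumed sub-representation $N\subseteq\tilde{M}_\mathbf{d}$ enters the argument.

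Using the rigidity of $\tilde{M}_\mathbf{e}$ together with \eqref{Eq:EulForm}, this Hom equality reduces to the $\Ext^1$--equality $[\tilde{M}_\mathbf{e},\tilde{M}_\mathbf{d}]^1=[\tilde{M}_\mathbf{e},\tilde{M}_\mathbf{d-e}]^1$. I plan to establish this by a pinching chain of three inequalities. Setting $S:=\tilde{M}_\mathbf{d}/N$, one has $\tilde{M}_\mathbf{d-e}\Leq S$ by genericity of $\tilde{M}_\mathbf{d-e}$. Applying $\Hom(\tilde{M}_\mathbf{e},-)$ to the sequence $0\to N\to \tilde{M}_\mathbf{d}\to S\to 0$ and using hereditarity ($\Ext^2=0$) yields a surjection $\Ext^1(\tilde{M}_\mathbf{e},\tilde{M}_\mathbf{d})\to \Ext^1(\tilde{M}_\mathbf{e},S)$, whence $[\tilde{M}_\mathbf{e},\tilde{M}_\mathbf{d}]^1\geq[\tilde{M}_\mathbf{e},S]^1$. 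Upper semi--continuity of $\Ext^1$ under degeneration (which follows from that of $\Hom$ via \eqref{Eq:EulForm}) applied to $\tilde{M}_\mathbf{d-e}\Leq S$ gives $[\tilde{M}_\mathbf{e},S]^1\geq[\tilde{M}_\mathbf{e},\tilde{M}_\mathbf{d-e}]^1$, while applied to $\tilde{M}_\mathbf{d}\Leq \tilde{M}_\mathbf{e}\oplus \tilde{M}_\mathbf{d-e}$ together with the vanishing $\Ext^1(\tilde{M}_\mathbf{e},\tilde{M}_\mathbf{e})=0$ it gives $[\tilde{M}_\mathbf{e},\tilde{M}_\mathbf{d-e}]^1\geq[\tilde{M}_\mathbf{e},\tilde{M}_\mathbf{d}]^1$. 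The three quantities therefore coincide; this pinching is the main obstacle of the proof.

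With the Hom equality in hand, part~(1) of Theorem~\ref{Thm:Bongartz} immediately produces the desired embedding $\tilde{M}_\mathbf{e}\hookrightarrow \tilde{M}_\mathbf{d}$. For the generic quotient, part~(2) asserts that $\tilde{M}_\mathbf{d-e}$, arising as the quotient of $\tilde{M}_\mathbf{e}\oplus \tilde{M}_\mathbf{d-e}$ by its direct summand $\tilde{M}_\mathbf{e}$, is a degeneration of the generic quotient $T$ of $\tilde{M}_\mathbf{d}$ by $\tilde{M}_\mathbf{e}$; but since $\tilde{M}_\mathbf{d-e}$ has dense orbit in $R_\mathbf{d-e}$, the condition $T\Leq \tilde{M}_\mathbf{d-e}$ forces $T\simeq \tilde{M}_\mathbf{d-e}$. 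The generic embedding therefore has quotient $\tilde{M}_\mathbf{d-e}$, yielding the short exact sequence~\eqref{Eq:GenericSequence}.
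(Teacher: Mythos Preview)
Your proof is correct and follows the same overall strategy as the paper: apply Bongartz's Theorem~\ref{Thm:Bongartz} with test module $U=\tilde{M}_\mathbf{e}$ to a suitable degeneration of $\tilde{M}_\mathbf{d}$. The difference lies in the choice of the degenerate comparison module. The paper uses $R:=\tilde{M}_\mathbf{e}\oplus(\tilde{M}_\mathbf{d}/N)$, while you use $R':=\tilde{M}_\mathbf{e}\oplus\tilde{M}_\mathbf{d-e}$.

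This trade--off cuts both ways. With the paper's $R$, the Hom equality is almost immediate: one has $[N,\tilde{M}_\mathbf{d}/N]^1\leq[\tilde{M}_\mathbf{d},\tilde{M}_\mathbf{d}]^1=0$, and then $\tilde{M}_\mathbf{e}\Leq N$ forces $[\tilde{M}_\mathbf{e},\tilde{M}_\mathbf{d}/N]^1=0$, so $[\tilde{M}_\mathbf{e},R]=\langle\mathbf{e},\mathbf{d}\rangle=[\tilde{M}_\mathbf{e},\tilde{M}_\mathbf{d}]$ in two lines; your pinching chain is longer. On the other hand, your choice of $R'$ makes the generic--quotient step entirely transparent: part~(2) of Theorem~\ref{Thm:Bongartz} yields $T\Leq\tilde{M}_\mathbf{d-e}$ directly (since $\tilde{M}_\mathbf{d-e}$ is the quotient of $R'$ by its summand $\tilde{M}_\mathbf{e}$), whereas with the paper's $R$ part~(2) only gives $T\Leq \tilde{M}_\mathbf{d}/N$, and the passage from this to $T\simeq\tilde{M}_\mathbf{d-e}$ requires an extra word. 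Both variants are valid; yours is arguably the cleaner packaging of the second half.
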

\begin{proof}
For simplicity of notation, we put $M:=\tilde{M}_\mathbf{d}$ and $L:=\tilde{M}_\mathbf{e}$.  If $\mathbf{e}$ is either zero or $\mathbf{d}$, there is nothing to prove. Thus, let us assume that $0\subsetneq N\subsetneq M$ is a proper sub-representation of $M$. Then the quotient of $M$ by the image of the embedding $N\subseteq M$ is a non--zero representation of $Q$ of dimension vector $\mathbf{d}-\mathbf{e}\neq\mathbf{0}$ that we denote by the symbol $M/N$ (this notation is misleading since it is not sensitive  to the particular embedding $N\subseteq M$ but it is commonly used). We notice that $[N,M/N]^1=0$: indeed $[N,M/N]^1\leq [M,M]^1=0$ (see \cite[proof of Corollary~3]{CR}).  The representation $R:=L\oplus M/N$ is a representation of dimension vector $\mathbf{d}$ and hence $M\Leq R$. Since $L\Leq N$, in view of \eqref{Eq:DegZwara} , we get $[L,M/N]^1\leq [N,M/N]^1=0$ and hence $[L,R]^1=0$.  In particular, $[L,R]=\langle\mathbf{e},\mathbf{d}\rangle$. 
In view of \eqref{Eq:DegZwara}, we also have $[L,M]\leq [L,R]=\langle\mathbf{e},\mathbf{d}\rangle\leq [L,M]$. In conclusion, $[L,M]=[L,R]=\langle\mathbf{e},\mathbf{d}\rangle$. Since $L$ embeds into $R$ by construction, the first  part of Theorem~\ref{Thm:Bongartz} guarantees that $L$ embeds into $M$ too. The second part of Theorem~\ref{Thm:Bongartz} implies that the generic quotient of $M$ by $L$ degenerates to $\tilde{M}_{\mathbf{d-e}}$. In particular the generic quotient of $M$ by $L$ is $\tilde{M}_{\mathbf{d-e}}$ itself, proving \eqref{Eq:GenericSequence}.
\end{proof}

In corollary~\ref{Cor:EmbeddingGeneric}, the non--emptiness of  the quiver Grassmannian $Gr_\mathbf{e}(\tilde{M}_\mathbf{d})$ was assumed. The next result is a criterion to decide when this is the case. 

\begin{corollary}\label{Cor:Criterion NonEmpty}
Let $\mathbf{d}, \mathbf{e}\in\ZZ^{Q_0}_{\geq0}$ be two dimension vectors such that $\mathbf{d}-\mathbf{e}\in\ZZ^{Q_0}_{\geq0}$ is again a dimension vector. Then the quiver Grassmannian $Gr_\mathbf{e}(\tilde{M}_\mathbf{d})$ is non--empty if and only if $[\tilde{M}_\mathbf{e},\tilde{M}_\mathbf{d-e}]^1=0$.
\end{corollary}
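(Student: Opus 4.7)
The plan is to prove both implications by combining the short exact sequence furnished by Corollary~\ref{Cor:EmbeddingGeneric} with Bongartz's theorem (Theorem~\ref{Thm:Bongartz}); rigidity of the generic modules and the fact that $\textrm{Rep}_K(Q)$ is hereditary are the two structural inputs.

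For the forward implication, assume $Gr_\mathbf{e}(\tilde{M}_\mathbf{d}) \neq \emptyset$. Then Corollary~\ref{Cor:EmbeddingGeneric} provides the short exact sequence $0 \to \tilde{M}_\mathbf{e} \to \tilde{M}_\mathbf{d} \to \tilde{M}_\mathbf{d-e} \to 0$. I will first apply $\Hom_Q(\tilde{M}_\mathbf{d},-)$: since the category is hereditary, the long exact sequence terminates in $\Ext^1_Q(\tilde{M}_\mathbf{d},\tilde{M}_\mathbf{d}) \to \Ext^1_Q(\tilde{M}_\mathbf{d},\tilde{M}_\mathbf{d-e}) \to 0$, and rigidity of $\tilde{M}_\mathbf{d}$ forces $[\tilde{M}_\mathbf{d},\tilde{M}_\mathbf{d-e}]^1 = 0$. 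A second application, this time of $\Hom_Q(-,\tilde{M}_\mathbf{d-e})$ to the same sequence, yields the surjection $\Ext^1_Q(\tilde{M}_\mathbf{d},\tilde{M}_\mathbf{d-e}) \twoheadrightarrow \Ext^1_Q(\tilde{M}_\mathbf{e},\tilde{M}_\mathbf{d-e})$, delivering the desired vanishing $[\tilde{M}_\mathbf{e},\tilde{M}_\mathbf{d-e}]^1 = 0$.

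For the backward implication, suppose $[\tilde{M}_\mathbf{e},\tilde{M}_\mathbf{d-e}]^1 = 0$. I will introduce the auxiliary representation $R := \tilde{M}_\mathbf{e} \oplus \tilde{M}_\mathbf{d-e} \in R_\mathbf{d}$, in which $\tilde{M}_\mathbf{e}$ sits as a direct summand, hence as a submodule. Since $\tilde{M}_\mathbf{d}$ has dense orbit in $R_\mathbf{d}$, we have $\tilde{M}_\mathbf{d} \Leq R$. The strategy is to invoke Theorem~\ref{Thm:Bongartz} with $U := \tilde{M}_\mathbf{e}$, $M := \tilde{M}_\mathbf{d}$, $N := R$; since $U$ already embeds into $N$, all that remains is to verify the Hom-equality $[U,M] = [U,N]$. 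Using additivity of $\Ext^1_Q$ on direct sums, rigidity of $\tilde{M}_\mathbf{e}$, and the hypothesis, I compute $[\tilde{M}_\mathbf{e},R]^1 = [\tilde{M}_\mathbf{e},\tilde{M}_\mathbf{e}]^1 + [\tilde{M}_\mathbf{e},\tilde{M}_\mathbf{d-e}]^1 = 0$, whence \eqref{Eq:EulForm} gives $[\tilde{M}_\mathbf{e},R] = \langle \mathbf{e},\mathbf{d}\rangle$. Meanwhile, \eqref{Eq:DegZwara} applied to $\tilde{M}_\mathbf{d}\Leq R$ yields $[\tilde{M}_\mathbf{e},\tilde{M}_\mathbf{d}] \leq [\tilde{M}_\mathbf{e},R] = \langle \mathbf{e},\mathbf{d}\rangle$, while \eqref{Eq:EulForm} forces $[\tilde{M}_\mathbf{e},\tilde{M}_\mathbf{d}] \geq \langle \mathbf{e},\mathbf{d}\rangle$. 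Thus equality holds throughout, Bongartz's theorem applies, $\tilde{M}_\mathbf{e}$ embeds into $\tilde{M}_\mathbf{d}$, and $Gr_\mathbf{e}(\tilde{M}_\mathbf{d})$ is non-empty.

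The delicate point is the direction of the degeneration: since $\tilde{M}_\mathbf{d}$ sits on the open orbit, the naive monotonicity of ``admits a subrepresentation of dimension $\mathbf{e}$'' along orbit closures runs the wrong way, and Bongartz's theorem is precisely what lifts the trivial embedding $\tilde{M}_\mathbf{e} \hookrightarrow R$ to an embedding into the more generic $\tilde{M}_\mathbf{d}$. The Ext-vanishing hypothesis enters exactly because it is what triggers the Hom-equality needed to apply the theorem.
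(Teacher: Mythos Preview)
Your proof is correct and follows essentially the same approach as the paper: both directions hinge on Bongartz's theorem for the backward implication (via the same auxiliary $R=\tilde{M}_\mathbf{e}\oplus\tilde{M}_\mathbf{d-e}$ and the Hom-equality $[\tilde{M}_\mathbf{e},\tilde{M}_\mathbf{d}]=[\tilde{M}_\mathbf{e},R]$), and on Corollary~\ref{Cor:EmbeddingGeneric} plus hereditary long exact sequences for the forward one. The only cosmetic difference is that for the forward direction the paper takes an arbitrary quotient $Q$ and uses the degeneration $\tilde{M}_\mathbf{d-e}\Leq Q$, whereas you invoke the full conclusion of Corollary~\ref{Cor:EmbeddingGeneric} that the generic quotient is already $\tilde{M}_\mathbf{d-e}$ and work directly with that sequence.
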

\begin{proof}
Suppose that $[\tilde{M}_\mathbf{e},\tilde{M}_\mathbf{d-e}]^1=0$. 
Let us consider the representation $R:=\tilde{M}_\mathbf{e}\oplus \tilde{M}_\mathbf{d-e}$. We have $\tilde{M}_\mathbf{d}\Leq R$. It follows that $[\tilde{M}_{\mathbf{e}}, \tilde{M}_\mathbf{d}]^1\leq [\tilde{M}_{\mathbf{e}}, R]^1=0$ and hence  $[\tilde{M}_{\mathbf{e}}, \tilde{M}_\mathbf{d}]=[\tilde{M}_{\mathbf{e}}, R]=\langle\mathbf{e},\mathbf{d}\rangle$. Since by construction $\tilde{M}_\mathbf{e}$ embeds into R, by Bongartz's result, it embeds into $\tilde{M}_\mathbf{d}$ too, proving that $Gr_\mathbf{e}(\tilde{M}_\mathbf{d})$ is non--empty.

On the other hand, suppose that $Gr_\mathbf{e}(\tilde{M}_\mathbf{d})$ is non--empty. Then, by corollary~\ref{Cor:EmbeddingGeneric}, $\tilde{M}_\mathbf{e}$ embeds into $\tilde{M}_\mathbf{d}$. Let $Q$ be a quotient of  $\tilde{M}_\mathbf{d}$ by an embedding of $\tilde{M}_\mathbf{e}$. Then $[\tilde{M}_\mathbf{e}, \tilde{M}_\mathbf{d-e}]^1\leq [\tilde{M}_\mathbf{e},Q]^1\leq [\tilde{M}_\mathbf{d}, \tilde{M}_\mathbf{d}]^1=0$ as desired. 
\end{proof}

Corollary~\ref{Cor:Criterion NonEmpty} can be reformulated in terms of generic extensions  \cite{RGen}. 

\begin{corollary}
If $[\tilde{M}_\mathbf{e},\tilde{M}_\mathbf{d-e}]^1=0$,  the generic extension of $\tilde{M}_\mathbf{d-e}$ by $\tilde{M}_\mathbf{e}$ is $\tilde{M}_\mathbf{d}$.
\end{corollary}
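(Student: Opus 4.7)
The statement is essentially an immediate unpacking of Reineke's definition of generic extension combined with the two preceding corollaries. Recall from \cite{RGen} that the generic extension of $\tilde{M}_\mathbf{d-e}$ by $\tilde{M}_\mathbf{e}$ is the unique representation $X\in R_\mathbf{d}$ whose $G_\mathbf{d}$--orbit is open in the (irreducible, constructible, $G_\mathbf{d}$--invariant) subset $\mathcal{E}\subseteq R_\mathbf{d}$ consisting of those modules arising as middle terms of short exact sequences $0\to \tilde{M}_\mathbf{e}\to X\to \tilde{M}_\mathbf{d-e}\to 0$. My plan is therefore to exhibit $\tilde{M}_\mathbf{d}$ as such a middle term and then to observe that its orbit is automatically the open one inside $\mathcal{E}$.

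The first step is to apply Corollary~\ref{Cor:Criterion NonEmpty}: the hypothesis $[\tilde{M}_\mathbf{e},\tilde{M}_\mathbf{d-e}]^1=0$ guarantees that $Gr_\mathbf{e}(\tilde{M}_\mathbf{d})$ is non--empty. Picking any point $N\in Gr_\mathbf{e}(\tilde{M}_\mathbf{d})$ and invoking Corollary~\ref{Cor:EmbeddingGeneric} produces the short exact sequence
$$
0\to \tilde{M}_\mathbf{e}\to \tilde{M}_\mathbf{d}\to \tilde{M}_\mathbf{d-e}\to 0,
$$
so that $\tilde{M}_\mathbf{d}$ lies in $\mathcal{E}$. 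Since $\tilde{M}_\mathbf{d}$ is by definition the unique representation in $R_\mathbf{d}$ whose $G_\mathbf{d}$--orbit is open and dense in the whole of $R_\mathbf{d}$, its orbit is \emph{a fortiori} open and dense in any $G_\mathbf{d}$--invariant constructible subvariety of $R_\mathbf{d}$ in which it is contained, in particular in $\mathcal{E}$. By uniqueness of the open orbit, $\tilde{M}_\mathbf{d}$ is the generic extension of $\tilde{M}_\mathbf{d-e}$ by $\tilde{M}_\mathbf{e}$.

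The only ingredient hiding behind this argument, and hence the sole potential obstacle, is the irreducibility of the set $\mathcal{E}$ of admissible middle terms, which is what makes the language of generic extension meaningful; this is part of Reineke's construction in \cite{RGen} and I would simply quote it. Beyond that, the proof is a three--line chain: non--emptiness from Corollary~\ref{Cor:Criterion NonEmpty}, realization of $\tilde{M}_\mathbf{d}$ as a middle term from Corollary~\ref{Cor:EmbeddingGeneric}, and the defining density property of $\tilde{M}_\mathbf{d}$ in $R_\mathbf{d}$.
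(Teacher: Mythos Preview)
Your proposal is correct and follows the same route as the paper: both use Corollary~\ref{Cor:Criterion NonEmpty} for non--emptiness and then Corollary~\ref{Cor:EmbeddingGeneric} to place $\tilde{M}_\mathbf{d}$ among the extensions of $\tilde{M}_\mathbf{d-e}$ by $\tilde{M}_\mathbf{e}$. The only cosmetic difference is the final step: the paper invokes \cite[Lemma~2.1]{RGen} (the generic extension is the one with minimal $\dim\mathrm{End}$, and $\tilde{M}_\mathbf{d}$ is rigid), whereas you argue directly that the $G_\mathbf{d}$--orbit dense in all of $R_\mathbf{d}$ is \emph{a fortiori} dense in $\mathcal{E}$; both are immediate.
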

\begin{proof}
In view of Corollary~\ref{Cor:Criterion NonEmpty}, if $[\tilde{M}_\mathbf{e},\tilde{M}_\mathbf{d-e}]^1=0$ then $Gr_\mathbf{e}(\tilde{M}_\mathbf{d})$ is non--empty. In this case,  there is a short exact sequence \eqref{Eq:GenericSequence} whose middle term is rigid, and hence its endomorphism ring has minimal dimension (among all the representations of dimension vector $\mathbf{d}$). In view of  \cite[Lemma~2.1]{RGen} the proof is complete.
\end{proof}

\begin{remark}
An interesting homological criterion for non--emptiness of a quiver Grassmannian associated with an arbitrary $Q$--representation  can be found in  \cite{MR}. 
\end{remark}

The next result collects properties of the quiver Grassmannians associated with rigid representations of a Dynkin quiver.

\begin{theorem}\label{Thm:RigQuivGras}
Let $\mathbf{e},\mathbf{d}\in\ZZ^{Q_0}_{\geq0}$ be dimension vectors such that $\mathbf{d}-\mathbf{e}\in\ZZ^{Q_0}_{\geq0}$. If $[\tilde{M}_\mathbf{e},\tilde{M}_\mathbf{d-e}]^1=0$ then $Gr_\mathbf{e}(\tilde{M}_\mathbf{d})$ is smooth and irreducible of dimension $\langle\mathbf{e},\mathbf{d}-\mathbf{e}\rangle$. 
\end{theorem}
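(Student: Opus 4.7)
The plan is to settle the three assertions in the following order: non--emptiness from Corollary~\ref{Cor:Criterion NonEmpty}; smoothness and the dimension formula via the Caldero--Reineke tangent space formula combined with an $\Ext^1$--vanishing; and irreducibility by transferring it from the universal quiver Grassmannian $Gr_\mathbf{e}^Q(\mathbf{d})$ through the $G_\mathbf{d}$--action.

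For smoothness I fix an arbitrary point $p=[(L,\iota)]\in Gr_\mathbf{e}(\tilde{M}_\mathbf{d})$. By the tangent space formula of Section~\ref{Sec:Stratification}, $\dim T_p Gr_\mathbf{e}(\tilde{M}_\mathbf{d})=[L,\tilde{M}_\mathbf{d}/\iota(L)]$, which by the Euler form identity \eqref{Eq:EulForm} equals $\langle\mathbf{e},\mathbf{d-e}\rangle+[L,\tilde{M}_\mathbf{d}/\iota(L)]^1$. The crucial point is therefore to show that $[L,\tilde{M}_\mathbf{d}/\iota(L)]^1=0$, which I would prove by a double $\Hom$/$\Ext$ chase on the short exact sequence $0\to\iota(L)\to\tilde{M}_\mathbf{d}\to\tilde{M}_\mathbf{d}/\iota(L)\to 0$: applying $\Hom_Q(\tilde{M}_\mathbf{d},-)$ together with rigidity of $\tilde{M}_\mathbf{d}$ and hereditariness of $KQ$ gives $[\tilde{M}_\mathbf{d},\tilde{M}_\mathbf{d}/\iota(L)]^1=0$, and applying $\Hom_Q(-,\tilde{M}_\mathbf{d}/\iota(L))$ with hereditariness once more transfers this vanishing to $[L,\tilde{M}_\mathbf{d}/\iota(L)]^1=0$. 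Since the tangent space dimension is then the constant $\langle\mathbf{e},\mathbf{d-e}\rangle$ at every point, and this matches the lower bound \eqref{Eq:IneqDimQuivGrass}, I conclude that $Gr_\mathbf{e}(\tilde{M}_\mathbf{d})$ is smooth of pure dimension $\langle\mathbf{e},\mathbf{d-e}\rangle$.

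For irreducibility I examine $U:=p_\mathbf{d}^{-1}(G_\mathbf{d}\cdot\tilde{M}_\mathbf{d})\subseteq Gr_\mathbf{e}^Q(\mathbf{d})$. By non--emptiness of the fibre, $U$ is a non--empty open subvariety of the smooth irreducible $Gr_\mathbf{e}^Q(\mathbf{d})$, hence itself irreducible; and the restriction of $p_\mathbf{d}$ realises $U$ as a $G_\mathbf{d}$--equivariant fibre bundle over the homogeneous space $G_\mathbf{d}\cdot\tilde{M}_\mathbf{d}$, namely $U\simeq G_\mathbf{d}\times^{\textrm{Aut}_Q(\tilde{M}_\mathbf{d})}Gr_\mathbf{e}(\tilde{M}_\mathbf{d})$. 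Since the stabiliser $\textrm{Aut}_Q(\tilde{M}_\mathbf{d})$ is open in the vector space $\Hom_Q(\tilde{M}_\mathbf{d},\tilde{M}_\mathbf{d})$ and hence connected, and $G_\mathbf{d}$ is connected too, irreducible components of $U$ correspond bijectively to irreducible components of $Gr_\mathbf{e}(\tilde{M}_\mathbf{d})$; irreducibility of $U$ thus forces irreducibility of $Gr_\mathbf{e}(\tilde{M}_\mathbf{d})$. The main obstacle I anticipate is this last step: the $\Ext$--chase is essentially formal once rigidity and hereditariness are in hand, whereas realising $U$ as an associated bundle and translating $U$--irreducibility into fibre--irreducibility requires a careful appeal to the $G_\mathbf{d}$--equivariance of $p_\mathbf{d}$ and to the connectedness of the stabiliser.
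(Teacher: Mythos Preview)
Your proof is correct. The smoothness argument is essentially the same as the paper's: both compute the tangent space via the Caldero--Reineke formula and use that $[L,\tilde{M}_\mathbf{d}/\iota(L)]^1\leq[\tilde{M}_\mathbf{d},\tilde{M}_\mathbf{d}]^1=0$; you simply spell out the two long exact sequences behind this inequality, whereas the paper states it in one line.

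For irreducibility, however, you take a genuinely different route. The paper argues via the stratification $Gr_\mathbf{e}(\tilde{M}_\mathbf{d})=\coprod_{[N]}\mathcal{S}_{[N]}$ of Section~\ref{Sec:Stratification}: it invokes Corollary~\ref{Cor:EmbeddingGeneric} to ensure that $\tilde{M}_\mathbf{e}$ embeds in $\tilde{M}_\mathbf{d}$, and then checks by a dimension count that $\mathcal{S}_{[\tilde{M}_\mathbf{e}]}$ is the unique stratum of maximal dimension, so $Gr_\mathbf{e}(\tilde{M}_\mathbf{d})=\overline{\mathcal{S}_{[\tilde{M}_\mathbf{e}]}}$. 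Your argument instead transfers irreducibility down from the universal Grassmannian: the open set $U=p_\mathbf{d}^{-1}(G_\mathbf{d}\cdot\tilde{M}_\mathbf{d})$ is irreducible, and the associated-bundle description $U\simeq G_\mathbf{d}\times^{\mathrm{Aut}_Q(\tilde{M}_\mathbf{d})}Gr_\mathbf{e}(\tilde{M}_\mathbf{d})$ together with connectedness of $G_\mathbf{d}$ and of $\mathrm{Aut}_Q(\tilde{M}_\mathbf{d})$ forces the fibre to be irreducible. Your approach is cleaner in that it bypasses both the stratification machinery and Corollary~\ref{Cor:EmbeddingGeneric}; the paper's approach has the advantage of yielding extra information, namely that the dense open stratum is precisely $\mathcal{S}_{[\tilde{M}_\mathbf{e}]}$, which is used elsewhere in the paper.
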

\begin{proof}
Let $[(N,\iota)]\in Gr_\mathbf{e}(M)$. Since $M$ is rigid, $[N,M/\iota(N)]^1\leq [M,M]^1=0$ and hence the tangent space at $p=[(N,\iota)]$ has dimension  $[N,M/\iota(N)]=\langle\mathbf{e},\mathbf{d}-\mathbf{e}\rangle$, proving smoothness. Consider the stratification $Gr_\mathbf{e}(M)=\coprod_{[N]}\mathcal{S}_{[N]}$. We know from Corollary~\ref{Cor:EmbeddingGeneric} that the rigid representation $L:=\tilde{M}_{\mathbf{e}}$ of dimension vector $\mathbf{e}$ embeds into $M$. From the dimension formula for the strata we get
$$
\begin{array}{r}
\textrm{dim }\mathcal{S}_{[N]}=[N,M]-[N,N]\leq [N,M/N]=\langle\mathbf{e},\mathbf{d}-\mathbf{e}\rangle=[L,M/L]=\\\\
{}[L,M]-[L,L]=\textrm{dim }\mathcal{S}_{[L]}.
\end{array}
$$
If equality holds, then $[N,M]-[N,N]=[N,M/N]$ and hence $[N,N]^1\leq[N,M]^1=0$, proving that $N$ is rigid and hence isomorphic to $L$. We conclude that $Gr_\mathbf{e}(M)=\overline{\mathcal{S}_{[L]}}$ is irreducible. 
\end{proof}

The next result provides an application of Theorem~\ref{Thm:Bongartz} to quiver Grassmannians which will be used later. 

\begin{corollary}\label{Cor:QGAlmostSplit}
Let $\xymatrix@1{0\ar[r]&\tau M\ar^\iota[r]&E\ar^\pi[r]&M\ar[r]&0}$ be an almost split sequence. Then the quiver Grassmannian $\textrm{Gr}_{\mathbf{dim }M}(E)$ is empty and the quiver Grassmannian $\textrm{Gr}_{\mathbf{dim }M}(\tau M\oplus M)$ is a reduced point. In particular, 
\begin{equation}\label{Eq:DimenPoint}
\textrm{dim Gr}_{\mathbf{dim }M}(\tau M\oplus M)=1>\langle\textbf{dim }M,\textbf{dim }\tau M\rangle=-1.
\end{equation}
\end{corollary}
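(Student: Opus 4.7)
The plan is to establish the three assertions in order using three tools from the excerpt: the non-emptiness criterion of Corollary~\ref{Cor:Criterion NonEmpty}, the defining property of almost split sequences, and Bongartz's Theorem~\ref{Thm:Bongartz}. Throughout I will use that, for a Dynkin quiver, every rigid representation $R$ (in particular every indecomposable, and therefore $M$, $\tau M$ and the middle term $E$ of an AR sequence) coincides with the generic representation $\tilde M_{\mathbf{dim}\, R}$.

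For the emptiness of $\textrm{Gr}_{\mathbf{dim}\, M}(E)$, I apply Corollary~\ref{Cor:Criterion NonEmpty} with $\mathbf{d}=\mathbf{dim}\, E$ and $\mathbf{e}=\mathbf{dim}\, M$, so that $\mathbf{d}-\mathbf{e}=\mathbf{dim}\,\tau M$ and $\tilde M_\mathbf{d}=E$, $\tilde M_\mathbf{e}=M$, $\tilde M_{\mathbf{d}-\mathbf{e}}=\tau M$. The criterion reduces non-emptiness to the vanishing $[M,\tau M]^1=0$. But the almost split sequence is itself a non-split class in $\Ext^1_Q(M,\tau M)$, so this group is non-zero and the Grassmannian is forced to be empty.

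For the second claim the canonical inclusion $M\hookrightarrow\tau M\oplus M$ already gives a point, so the Grassmannian is non-empty. To show it is the only point, let $N\subseteq \tau M\oplus M$ be any subrepresentation with $\mathbf{dim}\, N=\mathbf{dim}\, M$. The almost split sequence yields the degeneration $E\Leq\tau M\oplus M$. Using the defining property of almost split sequences (every morphism $X\to M$ that is not a split epimorphism factors through $\pi:E\to M$), one deduces that, whenever $M$ is not a direct summand of $X$, the map $\Hom_Q(X,E)\to\Hom_Q(X,M)$ is surjective; the connecting map $\Hom_Q(X,M)\to\Ext^1_Q(X,\tau M)$ attached to the AR sequence therefore vanishes, giving the identity $[X,E]=[X,\tau M\oplus M]$. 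If $M$ were not a direct summand of $N$, Theorem~\ref{Thm:Bongartz}(1) applied with $U=N$ would produce an embedding $N\hookrightarrow E$, contradicting part~(1). So $M$ is a direct summand of $N$, and the dimension constraint forces $N\simeq M$. Modulo $\textrm{Aut}_Q(M)=K^{\ast}$, the injective morphisms $M\to\tau M\oplus M$ lie in $\Hom_Q(M,\tau M)\oplus\textrm{End}_Q(M)=0\oplus K$, where the first summand vanishes by the Auslander--Reiten formula $\Hom_Q(M,\tau M)\simeq D\Ext^1_Q(M,M)$ together with the rigidity of $M$. Hence only one orbit remains and the Grassmannian is set-theoretically a single point. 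Reducedness follows from the tangent-space description recalled in Section~\ref{Sec:QG}: at the unique point $p=[(M,\iota)]$ the tangent space is $\Hom_Q(M,(\tau M\oplus M)/\iota(M))=\Hom_Q(M,\tau M)=0$.

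Finally, \eqref{Eq:DimenPoint} follows from the direct bookkeeping $\langle\mathbf{dim}\, M,\mathbf{dim}\,\tau M\rangle=[M,\tau M]-[M,\tau M]^1=0-1=-1$, which is strictly smaller than the dimension of the non-empty reduced point. The subtlest step of the plan is the Hom/Ext identity $[X,E]=[X,\tau M\oplus M]$ for $X$ not admitting $M$ as a summand: it is the bridge between the almost split sequence and the degeneration $E\Leq\tau M\oplus M$, and it is exactly what allows Bongartz's theorem to rule out every stratum other than the one containing the canonical $M$.
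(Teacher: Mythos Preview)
Your argument is correct and follows essentially the same route as the paper: the paper also shows $[N,E]=[N,\tau M\oplus M]$ via the almost split property and then invokes Bongartz's theorem to force $N\simeq M$, concluding reducedness from $\Hom_Q(M,\tau M)=0$. The only cosmetic difference is in the emptiness step, where the paper argues via Corollary~\ref{Cor:EmbeddingGeneric} together with $[M,E]=0$, while you go through the equivalent non-emptiness criterion of Corollary~\ref{Cor:Criterion NonEmpty} and $[M,\tau M]^1\neq 0$; both rely on $E$, $M$, $\tau M$ being rigid.
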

\begin{proof}
Since $E$ and $M$ are both rigid, if $\textrm{Gr}_{\mathbf{dim }M}(E)$ was non--empty, then, by Corollary~\ref{Cor:EmbeddingGeneric},  it would contain $M$, which is not the case since $[M,E]=0$. 

The quiver Grassmannian $\textrm{Gr}_{\mathbf{dim }M}(\tau M\oplus M)$ contains the canonical embedding of $M$ into $\tau M\oplus M$. Let us show that this is its only point. Let $[(N,j)]\in \textrm{Gr}_{\mathbf{dim }M}(\tau M\oplus M)$. Suppose that $N$ is not isomorphic to $M$. Then every map $f:N\rightarrow M$ factors through $\pi$ by the almost split property. In other words, the map $\Hom(N,\pi): \Hom_Q(N,E)\rightarrow \Hom_Q(N,M)$ induced by $\pi$ is surjective and its kernel is $\Hom_Q(N,\tau M)$. From this we see that $[N,\tau M\oplus M]=[N,E]$. By theorem~\ref{Thm:Bongartz} this yields an embedding of $N$ into $E$, contradicting the emptiness of $Gr_\mathbf{dim M}(E)$. Thus $N\simeq M$. Since $[M,\tau M]=0$, the only embedding of $M$ into $\tau M\oplus M$ is the canonical one, proving that $\textrm{Gr}_\mathbf{dim M}(\tau M\oplus M)$ is just a point. The tangent space at this point is isomorphic to $\Hom_Q(M,\tau M)$ which is zero dimensional, proving that $\textrm{Gr}_\mathbf{dim M}(\tau M\oplus M)$  is a reduced point. 
\end{proof}

In Corollary~\ref{Cor:QGAlmostSplit} above the specific quiver Grassmannian $Gr_{\mathbf{dim}\,M}(M\oplus\tau M)$ was considered. The next result collects properties of the remaining quiver Grassmannians associated with $M\oplus\tau M$. 

\begin{proposition}\label{Prop:QGMTauM}
Let $M$ be a non--projective indecomposable $Q$--representation, and let $\mathbf{e}\neq\mathbf{dim}\,M$  be a dimension vector such that $Gr_\mathbf{e}(M\oplus \tau M)$ is non--empty. Then $Gr_\mathbf{e}(M\oplus \tau M)$  is smooth of dimension $\langle\mathbf{e},\mathbf{d}-\mathbf{e}\rangle$ where $\mathbf{d}:=\mathbf{dim }(M\oplus \tau M)$. 
\end{proposition}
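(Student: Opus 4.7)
The plan is to show that at every point $p=[(L,\iota)]\in Gr_\mathbf{e}(E)$, where $E=M\oplus\tau M$, one has $\Ext^1_Q(L,E/\iota(L))=0$. By the tangent space formula of Section~\ref{Sec:Stratification} together with the homological interpretation of the Euler form \eqref{Eq:EulForm}, this vanishing forces $\dim T_pGr_\mathbf{e}(E)=\langle\mathbf{e},\mathbf{d}-\mathbf{e}\rangle$; combined with the lower bound \eqref{Eq:IneqDimQuivGrass}, it delivers simultaneously the smoothness of $Gr_\mathbf{e}(E)$ and the minimality of its dimension. Throughout, I plan to exploit the consequence of Auslander--Reiten duality that $\Hom_Q(M,\tau M)=0$ (the rigidity of the indecomposable $M$ expressed homologically), which is the same input $[M,E']=0$ already used in Corollary~\ref{Cor:QGAlmostSplit}.

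I would split the argument into two cases according to whether $M$ appears as a direct summand of $L$. In the first case, $L$ has no $M$-summand: the defining property of the almost split sequence $0\to\tau M\to E'\to M\to 0$ yields $[L,E']=[L,M]+[L,\tau M]=[L,E]$; together with the degeneration $E'\Leq E$ and the given embedding of $L$ in $E$, Theorem~\ref{Thm:Bongartz}(1) then produces an embedding $L\hookrightarrow E'$. Since $E'$ is rigid, the long exact sequence for $\Hom_Q(-,E')$ applied to $0\to L\to E'\to E'/L\to 0$ forces $\Ext^1_Q(L,E')=0$, and the equality of Euler characteristics $[L,E']=[L,E]$ then upgrades this to $\Ext^1_Q(L,E)=0$. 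The long exact sequence for $\Hom_Q(L,-)$ applied to $0\to L\to E\to E/L\to 0$ ends, because $Q$ is hereditary, with a surjection $\Ext^1_Q(L,E)\twoheadrightarrow\Ext^1_Q(L,E/L)$, which concludes this case.

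In the second case, $L$ has $M$ as a direct summand. I would first show that the multiplicity is exactly one: any morphism $M\to E$ has zero $\tau M$-component, so if it is a monomorphism its $M$-component must be an isomorphism (as $M$ is indecomposable); hence an embedding $M^{\oplus a}\hookrightarrow E$ would restrict to an impossible embedding $M^{\oplus a}\hookrightarrow M$ for $a\geq 2$. Writing $L=M\oplus L'$, one has $L'\neq 0$ since $\mathbf{e}\neq\mathbf{dim}\,M$, and $L'$ has no $M$-summand by construction. After changing the splittings of $L$ and $E$ accordingly, the embedding takes the normal form $(m,l')\mapsto(m,v(l'))$ for some monomorphism $v\colon L'\hookrightarrow\tau M$, so $E/\iota(L)\simeq\tau M/v(L')$. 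Identifying $L'$ with $v(L')$ gives
\[
\Ext^1_Q(L,E/\iota(L))=\Ext^1_Q(M,\tau M/L')\oplus\Ext^1_Q(L',\tau M/L').
\]
The second summand vanishes because $L'$ is a subrepresentation of the rigid indecomposable $\tau M$: Corollaries~\ref{Cor:EmbeddingGeneric} and \ref{Cor:Criterion NonEmpty} force $L'\simeq\tilde{M}_{\mathbf{dim}\,L'}$, $\tau M/L'\simeq\tilde{M}_{\mathbf{dim}\,\tau M-\mathbf{dim}\,L'}$, and $\Ext^1_Q(L',\tau M/L')=0$. The first summand vanishes by Auslander--Reiten duality, $\Ext^1_Q(M,\tau M/L')\simeq D\Hom_Q(\tau M/L',\tau M)$: a morphism $\tau M/L'\to\tau M$ lifts to an endomorphism of the indecomposable $\tau M$ that kills $L'$, but $\textrm{End}_Q(\tau M)=K\cdot\textrm{id}$, and the identity kills $L'$ only when $L'=0$.

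The hardest part will be the reduction to normal form in Case B: it relies crucially on $\Hom_Q(M,\tau M)=0$ both to make $\iota|_M$ land in the $M$-summand of $E$ via an isomorphism, and to legitimate the subsequent change of decomposition of $L$ that removes the $M$-valued component of $\iota|_{L'}$. All remaining steps reduce to bookkeeping with Euler forms and with the long exact sequences afforded by the hereditary hypothesis on $Q$.
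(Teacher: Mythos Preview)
Your overall architecture matches the paper's proof closely: both split into the cases ``$M$ is not a summand of $L$'' versus ``$M$ is a summand of $L$'', both invoke Bongartz's Theorem~\ref{Thm:Bongartz} in the first case to transport the embedding into the rigid middle term, and both reduce the second case to analysing $\Ext^1_Q(M\oplus L',\tau M/L')$ after putting the embedding in normal form. Your Case~A is in fact slightly cleaner than the paper's: you deduce $\Ext^1_Q(L,E)=0$ from $[L,E]=[L,E']$ together with $\Ext^1_Q(L,E')=0$, whereas the paper bounds $[L,E/L]^1\leq[E',E]^1$ and then needs the (true but unexplained) vanishing $[E',M\oplus\tau M]^1=0$. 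In Case~B your treatment of the summand $\Ext^1_Q(M,\tau M/L')$ via AR duality and $\mathrm{End}_Q(\tau M)=K$ is a legitimate alternative to the paper's use of the almost split property on the quotient side (the paper observes that $\tau M/L'$ is a quotient of the middle term $E'$, whence $[M,\tau M/L']^1\leq[M,E']^1=0$).

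There is, however, a genuine slip in your justification of $\Ext^1_Q(L',\tau M/L')=0$. You claim that Corollaries~\ref{Cor:EmbeddingGeneric} and \ref{Cor:Criterion NonEmpty} force $L'\simeq\tilde{M}_{\mathbf{dim}\,L'}$ and $\tau M/L'\simeq\tilde{M}_{\mathbf{dim}\,\tau M-\mathbf{dim}\,L'}$. They do not: those corollaries guarantee that the \emph{generic} representation of the relevant dimension vector embeds (with generic quotient), but they say nothing about an arbitrary subrepresentation $L'\subseteq\tau M$, which need not be rigid. The vanishing you want is nevertheless immediate from the rigidity of $\tau M$ alone, by the same two-step argument used in the proof of Theorem~\ref{Thm:RigQuivGras}: from $0\to L'\to\tau M\to\tau M/L'\to 0$ and hereditariness one gets surjections $\Ext^1_Q(\tau M,\tau M)\twoheadrightarrow\Ext^1_Q(L',\tau M)\twoheadrightarrow\Ext^1_Q(L',\tau M/L')$, and the left-hand side is zero. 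Replace your sentence invoking the two corollaries by this observation and the proof goes through.
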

\begin{proof}
For simplicity of notation, we put $F:=M\oplus \tau M$. As above, we denote by $E$ the middle term of a (and hence any) almost split sequence ending in $M$. Let $p=[(N,j)]\in Gr_\mathbf{e}(F)$. If every morphism $f\in\textrm{Hom }(N,M)$ is not split--epi (i.e. $M$ is not a direct summand of N) then the almost split property implies that $[N,F]=[N,E]$ and $N$ embeds into $E$ (this can be also deduced by Bongartz's Theorem~\ref{Thm:Bongartz}). In particular $[N, F/j(N)]^1\leq [E,F]^1=0$ and hence $p$ is a smooth point of $Gr_\mathbf{e}(F)$. 

If there is a homomorphism $f\in\Hom_Q(N,M)$ which is epi--split, then $M$ embeds into $N$. The quotient $N/M$ is a sub--representation of $(M\oplus\tau M)/M\simeq \tau M$; in other words $N$ is the middle term of an exact sequence $0\rightarrow M\rightarrow N\rightarrow U\rightarrow 0$ for some $U\subseteq \tau M$. Since $[U,M]^1\leq [\tau M,M]^1=0$, we see that $N\simeq M\oplus U$. The embedding of $N$ into $F$ has the form
$$
\xymatrix@C=80pt{
j:N=M\oplus U\ar^{\left[\begin{array}{cc} 1_M&f\\0&g\end{array}\right]}[r]& F=M\oplus\tau M
}
$$ 
where $g:N\rightarrow \tau M$ is a monomorphism. The quotient $F/N$ is isomorphic to $\tau M/g(U)$. Then $[N,F/N]^1=[U\oplus M, \tau M/U]^1=[M,\tau M/U]^1$. If $U=\tau M$ then the point $p$ is clearly smooth. If $U$ is a non--zero proper sub--representation of $\tau M$, then the almost split property implies that $\tau M/g(U)$ is a quotient of E and hence $[M,\tau M/U]^1\leq [M,E]^1=0$ and  $p$  is smooth in this case. If $U$ is the zero representation, then $N=M$ and $\mathbf{e}$ would be equal to $\mathbf{dim}\,M$, against the hypothesis. We have shown that for any point $p\in Gr_\mathbf{e}(F)$ the tangent space at $p$ has dimension  equal to $\langle\mathbf{e},\mathbf{d-e}\rangle$ and hence $\textrm{Gr}_\mathbf{e}(M)$ is smooth of dimension $\textrm{dim }Gr_\mathbf{e}(F)\leq T_p(Gr_\mathbf{e}(F))=\langle\mathbf{e},\mathbf{d}-\mathbf{e}\rangle$. In particular, we see that if $\mathbf{e}\neq\mathbf{dim}\,M$ is such that $\textrm{Gr}_\mathbf{e}(M)$ is non--empty, then $\langle\mathbf{e},\mathbf{d-e}\rangle\geq0$. Moreover, in view of \eqref{Eq:IneqDimQuivGrass} we conclude that   $\textrm{dim }Gr_\mathbf{e}(F)=\langle\mathbf{e},\mathbf{d}-\mathbf{e}\rangle$ as desired. 
\end{proof}

\begin{remark}
Theorem~\ref{Thm:New} below will imply that $Gr_\mathbf{e}(M\oplus \tau M)$ is also irreducible. 
\end{remark}

The next result characterizes pairs $(\mathbf{e},\mathbf{d})$  of dimension vectors such that there exists a quiver Grassmannian $\textrm{Gr}_\mathbf{e}(M)$ associated with a point $M\in \textrm{Im }p_2\subseteq R_\mathbf{d}$ which is smooth of minimal dimension. 
\begin{proposition}\label{Rem:SmoothMinDim}
Let $\mathbf{e},\mathbf{d}\in\ZZ^{Q_0}_{\geq0}$ be dimension vectors such that $\mathbf{d}-\mathbf{e}\in\ZZ^{Q_0}_{\geq0}$. We consider the subset of $R_\mathbf{d}$ given by 
$$
\mathcal{S}_{\mathbf{e},\mathbf{d}}=\{M\in R_\mathbf{d}|\, \textrm{Gr}_\mathbf{e}(M)\textrm{ is non--empty and smooth of dimension }\langle\mathbf{e},\mathbf{d-e}\rangle\}.
$$ 
Then $\mathcal{S}_{\mathbf{e},\mathbf{d}}$ is non--empty if and only if $[\tilde{M}_\mathbf{e},\tilde{M}_\mathbf{d-e}]^1=0$. In this case, $\mathcal{S}_{\mathbf{e},\mathbf{d}}$ is open and dense in $R_\mathbf{d}$.
\end{proposition}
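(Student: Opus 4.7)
My plan is to deduce both directions from properties of the proper $G_\mathbf{d}$-equivariant projection $p_\mathbf{d}: Gr^Q_\mathbf{e}(\mathbf{d})\to R_\mathbf{d}$ of the universal quiver Grassmannian, whose source is smooth and irreducible of dimension $\langle\mathbf{e},\mathbf{d-e}\rangle+\dim R_\mathbf{d}$.

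For the ``if'' direction, assume $[\tilde{M}_\mathbf{e},\tilde{M}_\mathbf{d-e}]^1=0$. By Corollary~\ref{Cor:Criterion NonEmpty} the fiber $Gr_\mathbf{e}(\tilde{M}_\mathbf{d})$ is non-empty, and by Theorem~\ref{Thm:RigQuivGras} it is smooth of minimal dimension, so $\tilde{M}_\mathbf{d}\in\mathcal{S}_{\mathbf{e},\mathbf{d}}$. Since the image of $p_\mathbf{d}$ is closed, $G_\mathbf{d}$-invariant, and contains the dense orbit of $\tilde{M}_\mathbf{d}$, the map $p_\mathbf{d}$ is surjective and hence $Gr_\mathbf{e}(M)\neq\emptyset$ for every $M\in R_\mathbf{d}$. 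To upgrade this to openness of $\mathcal{S}_{\mathbf{e},\mathbf{d}}$, I would invoke the Caldero--Reineke tangent space formula: at $p=[(L,\iota)]$ the (scheme-theoretic) tangent space of $Gr_\mathbf{e}(p_\mathbf{d}(p))$ equals $\ker d(p_\mathbf{d})_p$, whose dimension is upper semicontinuous on the smooth ambient $Gr^Q_\mathbf{e}(\mathbf{d})$. Consequently the set
$$
W:=\bigl\{p\in Gr^Q_\mathbf{e}(\mathbf{d})\,\big|\,\dim\ker d(p_\mathbf{d})_p>\langle\mathbf{e},\mathbf{d-e}\rangle\bigr\}
$$
is closed, and by properness $p_\mathbf{d}(W)\subset R_\mathbf{d}$ is closed as well. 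For $M\notin p_\mathbf{d}(W)$ every tangent space to $Gr_\mathbf{e}(M)$ has dimension exactly $\langle\mathbf{e},\mathbf{d-e}\rangle$, which combined with the universal lower bound~\eqref{Eq:IneqDimQuivGrass} forces $Gr_\mathbf{e}(M)$ to be smooth of that dimension; conversely every $M\in\mathcal{S}_{\mathbf{e},\mathbf{d}}$ avoids $p_\mathbf{d}(W)$ by definition. Thus $\mathcal{S}_{\mathbf{e},\mathbf{d}}=R_\mathbf{d}\setminus p_\mathbf{d}(W)$ is open, and it is dense because it is $G_\mathbf{d}$-invariant and contains $\tilde{M}_\mathbf{d}$.

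For the converse, pick any $M\in\mathcal{S}_{\mathbf{e},\mathbf{d}}$ and apply the standard fiber dimension inequality to $p_\mathbf{d}$:
$$
\langle\mathbf{e},\mathbf{d-e}\rangle=\dim Gr_\mathbf{e}(M)\geq\dim Gr^Q_\mathbf{e}(\mathbf{d})-\dim\overline{p_\mathbf{d}(Gr^Q_\mathbf{e}(\mathbf{d}))}=\langle\mathbf{e},\mathbf{d-e}\rangle+\dim R_\mathbf{d}-\dim\overline{p_\mathbf{d}(Gr^Q_\mathbf{e}(\mathbf{d}))}.
$$
This forces $\dim\overline{p_\mathbf{d}(Gr^Q_\mathbf{e}(\mathbf{d}))}\geq\dim R_\mathbf{d}$, so $p_\mathbf{d}$ is surjective. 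In particular $Gr_\mathbf{e}(\tilde{M}_\mathbf{d})$ is non-empty and Corollary~\ref{Cor:Criterion NonEmpty} yields $[\tilde{M}_\mathbf{e},\tilde{M}_\mathbf{d-e}]^1=0$.

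The step I expect to require most care is the upper-semicontinuity argument: one must work with the scheme-theoretic (not reduced) fibers of $p_\mathbf{d}$ so that the Caldero--Reineke formula identifies the fiber tangent space with $\ker d(p_\mathbf{d})_p$, and then observe that the kernel of the differential $d(p_\mathbf{d})$, viewed as a morphism of vector bundles on the smooth source, has upper-semicontinuous dimension. Once this is granted, the remainder is routine bookkeeping with properness of $p_\mathbf{d}$ and $G_\mathbf{d}$-equivariance.
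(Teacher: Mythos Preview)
Your argument is correct. The ``if'' direction and the openness proof are essentially the paper's approach made explicit: the paper also invokes upper--semicontinuity of fiber and tangent space dimension to conclude the complement is closed, but leaves implicit the passage from the source $Gr^Q_\mathbf{e}(\mathbf{d})$ to the target $R_\mathbf{d}$ via properness, which you spell out cleanly with the set $W$ and its image $p_\mathbf{d}(W)$.

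The genuine difference is in the converse. The paper argues pointwise: given $M\in\mathcal{S}_{\mathbf{e},\mathbf{d}}$ and any $[(N,j)]\in Gr_\mathbf{e}(M)$, smoothness at minimal dimension forces $[N,M/j(N)]^1=0$, and then the degeneration inequalities $\tilde{M}_\mathbf{e}\Leq N$, $\tilde{M}_\mathbf{d-e}\Leq M/j(N)$ give $[\tilde{M}_\mathbf{e},\tilde{M}_\mathbf{d-e}]^1\leq[N,M/j(N)]^1=0$ directly. You instead run a global dimension count to force $p_\mathbf{d}$ surjective, land $\tilde{M}_\mathbf{d}$ in the image, and quote Corollary~\ref{Cor:Criterion NonEmpty}. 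Both are short; the paper's route is more self--contained (it does not re--enter Corollary~\ref{Cor:Criterion NonEmpty}) and extracts the stronger intermediate fact $[N,M/j(N)]^1=0$ for \emph{every} subrepresentation, while yours has the virtue of not touching the degeneration order at all and working uniformly through the geometry of $p_\mathbf{d}$.
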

\begin{proof}
If $[\tilde{M}_\mathbf{e},\tilde{M}_\mathbf{d-e}]^1=0$ then $\tilde{M}_\mathbf{d}\in \mathcal{S}_{\mathbf{e},\mathbf{d}}$ which is hence non--empty. 
On the other hand, let $M\in \mathcal{S}_{\mathbf{e},\mathbf{d}}$. Then there is a point $[(N,j)]\in Gr_\mathbf{e}(M)$. We have $[N,M/j(N)]^1=0$ by assumption. It thus follows that  $[\tilde{M}_\mathbf{e}, \tilde{M}_\mathbf{d-e}]^1\leq [N,M/j(N)]^1=0$.

If $\mathcal{S}_{\mathbf{e},\mathbf{d}}$ is non--empty, then it contains the open orbit and hence it is dense in $R_\mathbf{d}$. Moreover, $\mathcal{S}_{\mathbf{e},\mathbf{d}}$ is (finite) union of $G_\mathbf{d}$--orbits (since the map $p_2:\textrm{Gr}_\mathbf{e}^Q(\mathbf{d})\rightarrow R_\mathbf{d}$ is $G_\mathbf{d}$--equivariant) and so is its complement. By upper--semicontinuity of the fiber dimension and of the dimension of the tangent space we see that the complement is closed. 
\end{proof}

\section{Main results}\label{Sec:Cluster}

This section contains the main results of the paper, already discussed in the introduction. In the whole section $Q$ denotes a Dynkin quiver and a representation of $Q$ is complex and finite dimensional. 

%
Recall that two projective varieties $X_1$ and $X_2$ are called \emph{deformation equivalent} if they are fibres of a proper smooth family over a connected base space. In this case $X_1$ and $X_2$ are diffeomorphic but the opposite is not true in general (there is a vast literature concerning this topic. For high dimension the reader could look at the classical papers \cite{LW1, LW2}, in dimension two there is a more recent paper by Manetti \cite{Ma}). In particular $X_1$ and $X_2$ share the same topological invariants (e.g. Poincar\'e polynomials, Euler characteristic..) and moreover they also have the same Hodge numbers (which is not the case for diffeomorphic varieties).  

\begin{theorem}\label{Thm:New}
Let $\mathbf{e}, \mathbf{d}\in\ZZ_{\geq 0}^{Q_0}$ such that $\mathbf{d-e}\in \ZZ_{\geq0}^{Q_0}$. Suppose that the set $\mathcal{S}_{\mathbf{e},\mathbf{d}}$ defined in Proposition~\ref{Rem:SmoothMinDim} is non--empty. Then for every $M_1,M_2\in \mathcal{S}_{\mathbf{e},\mathbf{d}}$ the quiver Grassmannians $Gr_\mathbf{e}(M_1)$ and $Gr_\mathbf{e}(M_2)$ are deformation equivalent. In particular, they are all diffeomorphic, irreducible, with the same Poincar\'e polynomial  and hence same Euler characteristic. Moreover they have the same Hodge numbers. 
\end{theorem}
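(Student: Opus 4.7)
The plan is to exhibit a proper smooth family over the open locus $\mathcal{S}_{\mathbf{e},\mathbf{d}}$ whose fibers are exactly the quiver Grassmannians in question, and then invoke Ehresmann's theorem together with the deformation-invariance of Hodge numbers. The natural candidate is the restriction of the second projection from the universal quiver Grassmannian of Section~\ref{Sec:QG}:
\[
\widetilde{p}:p_\mathbf{d}^{-1}(\mathcal{S}_{\mathbf{e},\mathbf{d}})\longrightarrow \mathcal{S}_{\mathbf{e},\mathbf{d}}.
\]
This map is surjective by the very definition of $\mathcal{S}_{\mathbf{e},\mathbf{d}}$, and proper as the pullback of the proper morphism $p_\mathbf{d}$ along an open immersion.

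First I would verify that $\widetilde{p}$ is a smooth morphism in the algebraic sense. The source is open in the smooth irreducible variety $Gr_\mathbf{e}^Q(\mathbf{d})$ of dimension $\langle\mathbf{e},\mathbf{d}-\mathbf{e}\rangle + \dim R_\mathbf{d}$, the target is open in the affine space $R_\mathbf{d}$, and each fiber is, by membership in $\mathcal{S}_{\mathbf{e},\mathbf{d}}$, smooth of dimension exactly $\langle\mathbf{e},\mathbf{d}-\mathbf{e}\rangle$, i.e.\ of the expected relative dimension. Since source and target are Cohen--Macaulay (even smooth) and the fibers are equidimensional, \emph{miracle flatness} ensures that $\widetilde{p}$ is flat; a flat morphism with smooth fibers between smooth varieties is smooth, hence a proper surjective submersion in the analytic topology.

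Next, $\mathcal{S}_{\mathbf{e},\mathbf{d}}$ is connected: by Proposition~\ref{Rem:SmoothMinDim} it is obtained from the affine space $R_\mathbf{d}$ by removing a proper Zariski-closed subset, which has real codimension at least two, so the complement remains connected in the complex topology. Ehresmann's theorem then produces a locally trivial $C^\infty$-fibration, which directly witnesses the deformation equivalence of all fibers $Gr_\mathbf{e}(M)$ for $M\in\mathcal{S}_{\mathbf{e},\mathbf{d}}$. Diffeomorphism invariance gives equality of Poincar\'e polynomials and Euler characteristics, while the constancy of Hodge numbers in a proper smooth family (a standard Hodge-theoretic consequence of Ehresmann) gives equality of Hodge numbers. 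For irreducibility, I would specialize to $\tilde{M}_\mathbf{d}\in\mathcal{S}_{\mathbf{e},\mathbf{d}}$ (which does lie in $\mathcal{S}_{\mathbf{e},\mathbf{d}}$ because non-emptiness of the latter forces $[\tilde{M}_\mathbf{e},\tilde{M}_{\mathbf{d}-\mathbf{e}}]^1=0$ by Proposition~\ref{Rem:SmoothMinDim}) and invoke Theorem~\ref{Thm:RigQuivGras} to conclude that $Gr_\mathbf{e}(\tilde{M}_\mathbf{d})$ is irreducible; since every other fiber is smooth and diffeomorphic to it, each is connected and smooth, hence irreducible.

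The step I expect to be the real obstacle is the verification that $\widetilde{p}$ is a smooth morphism, not merely a map with smooth fibers: one needs flatness, and without it algebraic smoothness simply does not follow from smoothness of fibers. Miracle flatness is the right tool, but one must check the equidimensionality hypothesis carefully and confirm that the fibers have the expected common dimension throughout the open locus and not only generically --- this is exactly what membership in $\mathcal{S}_{\mathbf{e},\mathbf{d}}$ encodes, so one should emphasise the role of Proposition~\ref{Rem:SmoothMinDim} here. Everything afterwards --- Ehresmann, deformation invariance of topological and Hodge invariants, and the irreducibility bootstrap from $\tilde{M}_\mathbf{d}$ --- is standard.
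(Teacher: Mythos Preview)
Your proposal is correct and follows essentially the same route as the paper: restrict $p_\mathbf{d}$ to $\mathcal{S}_{\mathbf{e},\mathbf{d}}$, use equidimensionality of fibers to get flatness (the paper cites Matsumura's criterion, you invoke miracle flatness --- same content), conclude smoothness and apply Ehresmann over the connected base. The only cosmetic differences are that the paper argues Hodge--number constancy via upper semicontinuity plus constancy of Betti numbers rather than citing it as a packaged result, and leaves the irreducibility bootstrap from $\tilde{M}_\mathbf{d}$ implicit where you spell it out.
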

\begin{proof}
Let us consider the universal quiver Grassmannian $Gr_\mathbf{e}^Q(\mathbf{d})\subset R_\mathbf{d}\times Gr_\mathbf{e}(\mathbf{d})$ and the map $p_\mathbf{d}:Gr_\mathbf{e}^Q(\mathbf{d})\rightarrow R_\mathbf{d}$ induced by the projection to $R_\mathbf{d}$. It was already observed that the map $p_\mathbf{d}$ is proper and $G_\mathbf{d}$--equivariant. Let us consider the restriction 
$$
p_\mathbf{d}|:p_\mathbf{d}^{-1}(\mathcal{S}_{\mathbf{e},\mathbf{d}})\rightarrow \mathcal{S}_{\mathbf{e},\mathbf{d}}.
$$
Since $\mathcal{S}_{\mathbf{e},\mathbf{d}}$ is non--empty by hypothesis, it is open and dense in $R_\mathbf{d}$, in particular it is smooth and irreducible. The counterimage $p^{-1}(\mathcal{S}_{\mathbf{e},\mathbf{d}})$ is smooth and irreducible being open and dense in the irreducible smooth variety $Gr_\mathbf{e}^Q(\mathbf{d})$. The restriction map $p_\mathbf{d}|$ is proper, since $p_\mathbf{d}$ is. By hypothesis, the fiber of $p_\mathbf{d}|$ over a point $N\in \mathcal{S}_{\mathbf{e},\mathbf{d}}$ is the quiver Grassmannian $Gr_\mathbf{e}(N)$ which is smooth and of dimension $\langle\mathbf{e}, \mathbf{d-e}\rangle$. Since the fibers have all the same dimension, $p_\mathbf{d}|$ is flat (see  \cite[Corollary of Theorem~23.1]{Mats}).  A proper flat morphism with smooth fibers is smooth \cite[Theorem~3', Ch.~III.10]{Mumf}. This shows that $Gr_\mathbf{e}(M_1)$ and $Gr_\mathbf{e}(M_2)$ are deformation equivalent.  By Ehresmann's trivialisation theorem (see e.g. \cite[Theorem~9.3]{Voisin}), $p_\mathbf{d}|$ is locally trivial, and hence (since Y is connected) its fibers are all diffeomorphic. In particular, all the fibers share the same topological invariants. They also have the same Hodge numbers: indeed Hodge numbers are upper semi--continuous and they sum up to the dimension of the cohomology spaces which are topological invariants. 
\end{proof}

\begin{remark}
The proof of Theorem~\ref{Thm:New} is inspired by \cite[Proof of Theorem~3.2]{CFR} where the flatness of a restriction morphism of $p_\mathbf{d}$ was used to deduce that degenerate flag varieties are flat degenerations of flag varieties. 
\end{remark}

\begin{theorem}\label{Thm:ARQG}
Let $\xymatrix@C=8pt{
0\ar[r]&\tau M\ar^\iota[r]&E\ar^\pi[r]&M\ar[r]&0
}$ be an almost split sequence in $Rep(Q)$. Then the quiver Grassmannians $Gr_\mathbf{e}(M\oplus \tau M)$ and $Gr_\mathbf{e}(E)$ are deformation equivalent if $\mathbf{e}\neq \mathbf{dim }M$. In particular they are diffeomorphic, $\chi(Gr_\mathbf{e}(M\oplus \tau M))=\chi(Gr_\mathbf{e}(E))$ and they have the same Poincar\'e polynomial and the same Hodge polynomial. 
\end{theorem}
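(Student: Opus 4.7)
The plan is to reduce everything to Theorem~\ref{Thm:New} by showing that, whenever $\mathbf{e}\neq\mathbf{dim}\,M$, both $E$ and $M\oplus\tau M$ belong to the locus $\mathcal{S}_{\mathbf{e},\mathbf{d}}$ of Proposition~\ref{Rem:SmoothMinDim}, where $\mathbf{d}:=\mathbf{dim}\,E=\mathbf{dim}(M\oplus\tau M)$. The middle term of an almost split sequence is rigid, so $E\simeq\tilde{M}_\mathbf{d}$, and Theorem~\ref{Thm:RigQuivGras} guarantees that $Gr_\mathbf{e}(E)$ is smooth and irreducible of dimension $\langle\mathbf{e},\mathbf{d-e}\rangle$ whenever non--empty. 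Proposition~\ref{Prop:QGMTauM} takes care of the other side: under the assumption $\mathbf{e}\neq\mathbf{dim}\,M$, the variety $Gr_\mathbf{e}(M\oplus\tau M)$, if non--empty, is automatically smooth of the expected dimension $\langle\mathbf{e},\mathbf{d-e}\rangle$.

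The remaining (and, I think, only genuine) point to verify is that $Gr_\mathbf{e}(E)$ is non--empty if and only if $Gr_\mathbf{e}(M\oplus\tau M)$ is non--empty; when both are empty the theorem holds trivially. For the direction ``$E$ non--empty $\Rightarrow$ $M\oplus\tau M$ non--empty'', I would use the universal family: the image of the proper map $p_\mathbf{d}:Gr_\mathbf{e}^Q(\mathbf{d})\to R_\mathbf{d}$ is a $G_\mathbf{d}$--invariant closed subset of $R_\mathbf{d}$ containing the orbit of $E$; since $E$ is rigid its orbit is dense, so the image is all of $R_\mathbf{d}$, and in particular contains $M\oplus\tau M$. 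For the converse, non--emptiness of $Gr_\mathbf{e}(M\oplus\tau M)$ combined with Proposition~\ref{Prop:QGMTauM} places $M\oplus\tau M$ in $\mathcal{S}_{\mathbf{e},\mathbf{d}}$, and then Proposition~\ref{Rem:SmoothMinDim} says $\mathcal{S}_{\mathbf{e},\mathbf{d}}$ is open and dense, hence contains the generic orbit $G_\mathbf{d}\cdot E$, so $Gr_\mathbf{e}(E)$ is non--empty as well.

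Once both representations are seen to lie in $\mathcal{S}_{\mathbf{e},\mathbf{d}}$, Theorem~\ref{Thm:New} applied to $M_1=E$ and $M_2=M\oplus\tau M$ immediately produces the deformation equivalence between $Gr_\mathbf{e}(E)$ and $Gr_\mathbf{e}(M\oplus\tau M)$, and hence their common diffeomorphism type, common Poincar\'e polynomial, common Euler characteristic, and common Hodge polynomial. The main obstacle in the whole argument is really the non--emptiness bookkeeping sketched above: one must be careful to exploit both the density of the orbit of the rigid representative $E$ (to push non--emptiness from $E$ to an arbitrary $M\oplus\tau M$) and the openness of $\mathcal{S}_{\mathbf{e},\mathbf{d}}$ (to push it back in the opposite direction); the hypothesis $\mathbf{e}\neq\mathbf{dim}\,M$ is exactly what is needed so that Proposition~\ref{Prop:QGMTauM} applies and the stratum containing the canonical embedding $M\hookrightarrow M\oplus\tau M$ (which forces dimension $1>\langle\mathbf{e},\mathbf{d-e}\rangle$ by \eqref{Eq:DimenPoint}) is ruled out.
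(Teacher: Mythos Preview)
Your proposal is correct and follows essentially the same route as the paper: reduce to Theorem~\ref{Thm:New} via Proposition~\ref{Prop:QGMTauM} and Theorem~\ref{Thm:RigQuivGras}, with the only remaining work being the non--emptiness equivalence, which you handle exactly as the paper does (density of the rigid orbit for one direction, openness of $\mathcal{S}_{\mathbf{e},\mathbf{d}}$ from Proposition~\ref{Rem:SmoothMinDim} for the other). Your write--up is in fact a bit more explicit than the paper's about why each implication goes through.
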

\begin{proof}
In view of Proposition~\ref{Prop:QGMTauM}, Proposition~\ref{Rem:SmoothMinDim} and Theorem~\ref{Thm:New}, it remains to prove $Gr_\mathbf{e}(E)$ is non--empty if and only if $Gr_\mathbf{e}(M\oplus \tau M)$ is non--empty. Since $E$ is rigid, if $Gr_\mathbf{e}(E)$ is non--empty then every representation with the same dimension vector as $E$ admits a subrepresentation of dimension vector $\mathbf{e}$, since the map $p_\mathbf{d}$ is surjective in this case. In particular,  $Gr_\mathbf{e}(M\oplus \tau M)$ is non--empty. Viceversa, if $Gr_\mathbf{e}(M\oplus \tau M)$ is non--empty, then $M\oplus \tau M$ belongs to $\mathcal{S}_{\mathbf{e,d}}$ which is hence non--empty and contains $E$ in view of Proposition~\ref{Rem:SmoothMinDim}. 
 \end{proof}

\begin{remark}
The diffeomorphism between $Gr_\mathbf{e}(E)$ and $Gr_\mathbf{e}(M\oplus\tau M)$ does not preserve the sub-representation types, in general. For example, it is not true that the rigid representation $\tilde{M}_\mathbf{e}$ embeds into $M\oplus\tau M$, even if it does embed into E. For a counterexample consider the quiver $Q:1\rightarrow 2 \rightarrow 3$ and the almost split sequence
$0\rightarrow P_2\rightarrow P_1\oplus S_2\rightarrow I_2\rightarrow 0$. Then $Gr_{(1,1,1)}(P_1\oplus S_2)=\mathcal{S}_{[P_1]}$ and $Gr_{(1,1,1)}(P_2\oplus I_2)=\mathcal{S}_{[I_2\oplus S_3]}$. They are both (reduced) points but with different sub-representation types. 
\end{remark}

In type A, Theorem~\ref{Thm:ARQG} can be straightened by proving that the two quiver Grassmannians $Gr_\mathbf{e}(E)$ and $Gr_\mathbf{e}(M\oplus\tau M)$ are actually isomorphic. This follows from the explicit description of the almost split sequences given in \cite{BR} (since a type A quiver algebra is a string algebra) and induction. On the other hand in type D and hence E this is not the case: the following is a (counter-)example. 

\begin{example}
Let 
$$
Q:\xymatrix{
&2\ar[d]&\\
1\ar[r]&4&3\ar[l]
}
$$
be a quiver of type $D_4$ with subspace orientation. Consider the almost split sequence
$$
\xymatrix@C=20pt{
0\ar[r]& (1,1,1,2)\ar[r]&(1,1,0,1)\oplus (1,0,1,1)\oplus (0,1,1,1)\ar[r]&(1,1,1,1)\ar[r]&0
}
$$
where the indecomposables are described by their dimension vectors. Let $E:=(1,1,0,1)\oplus (1,0,1,1)\oplus (0,1,1,1)$  and $F:= (1,1,1,2)\oplus (1,1,1,1)$. They have the following presentations
$$
\begin{array}{c|c}
E:\xymatrix@C=40pt@R=50pt{
&\mathbf{C}^2\ar_{\left[\begin{array}{cc}1&0\\0&0\\0&1\end{array}\right]}[d]&\\
\mathbf{C}^2\ar_{\left[\begin{array}{cc}1&0\\1&1\\0&1\end{array}\right]}[r]&\mathbf{C}^3&
\mathbf{C}^2\ar^{\left[\begin{array}{cc}0&0\\1&0\\0&1\end{array}\right]}[l]}
&
F:\xymatrix@C=40pt@R=50pt{
&\mathbf{C}^2\ar_{\left[\begin{array}{cc}1&0\\0&0\\0&1\end{array}\right]}[d]&\\
\mathbf{C}^2\ar_{\left[\begin{array}{cc}1&0\\1&0\\0&1\end{array}\right]}[r]&\mathbf{C}^3&
\mathbf{C}^2\ar^{\left[\begin{array}{cc}0&0\\1&0\\0&1\end{array}\right]}[l]}
\end{array}
$$
Notice that the restriction of both E and F to the sub-quiver of $Q$ obtained by removing vertex 1 defines the same representation. This is a general fact that follows from Ringel's paper \cite{R} (in a previous version of this paper, this fact played an important r\^ole). Let us consider dimension vector $\mathbf{e}=(1,1,1,2)$. The quiver Grassmannian $Gr_{(1,1,1,2)}(E)$ is $(\PP^2)^\vee$ blown up in the three points $P_0=[1:0:0]$, $P_1=[1:-1:1]$ and $P_2=[0:1:0]$. On the other hand  $Gr_{(1,1,1,2)}(F)$ is $(\PP^2)^\vee$ blown up in the three points $Q_0=[1:0:0]$, $Q_1=[1:-1:0]$ and $Q_2=[0:1:0]$. Notice that the three points $P_0$, $P_1$ and $P_2$ are in generic position while the three points $Q_0$, $Q_1$ and $Q_2$ are collinear. It follows that they are not isomorphic (to see this one can notice that $Gr_{(1,1,1,2)}(E)$ is Fano, while  $Gr_{(1,1,1,2)}(F)$ is not). 
\end{example}

\subsection{Positivity}
In this section we prove that quiver Grassmannians which are smooth of minimal dimension have positive Euler characteristic. This is based on the following key result. 
\begin{theorem}\label{Thm:NoOddInd}
For every indecomposable representation $M$ of a Dynkin quiver $Q$, and every dimension vector $\mathbf{e}$, the quiver Grassmannian $Gr_\mathbf{e}(M)$ has zero odd cohomology. In particular, $\chi(Gr_\mathbf{e}(M))\geq0$. 
\end{theorem}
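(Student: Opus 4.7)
The plan is to induct on indecomposables $M$ of $Q$ ordered by any fixed linear extension $<$ of the Auslander--Reiten quiver $\Gamma_Q$ (a finite acyclic graph, so such an extension exists). The base cases are trivial: if $M=P_i$ is projective then $P_i$ is thin, so $Gr_\mathbf{e}(P_i)$ is either empty or a reduced point; if $\mathbf{e}\in\{\mathbf{0},\mathbf{dim}\,M\}$ the Grassmannian is a single point.

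Otherwise $M$ is indecomposable non-projective and I would invoke its almost split sequence $0\to\tau M\to E\to M\to 0$. By the explicit description~\eqref{Eq:ARQuiverVertices} together with the absence of loops and multiple arrows in a Dynkin quiver, $E=\bigoplus_{s=1}^{N}E_s$ decomposes as a multiplicity-free direct sum of pairwise non-isomorphic indecomposables, each admitting an irreducible morphism to $M$. Hence every $E_s$, as well as $\tau M$, lies strictly below $M$ in $<$, so the induction hypothesis applies to each of their quiver Grassmannians.

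The heart of the argument uses two Bialynicki--Birula decompositions. First, scale the summands of $E$ by a generic one-parameter subgroup of $(\mathbb{C}^\times)^{N}$: the fixed locus in $Gr_\mathbf{e}(E)$ (smooth projective because $E$ is rigid) is $\coprod_{\mathbf{e}_1+\cdots+\mathbf{e}_N=\mathbf{e}}\prod_s Gr_{\mathbf{e}_s}(E_s)$, since fixed subrepresentations must respect the grading. Bialynicki--Birula then decomposes $Gr_\mathbf{e}(E)$ into affine bundles of even real rank over these products, and the induction hypothesis combined with K\"unneth gives $H^{\mathrm{odd}}(Gr_\mathbf{e}(E))=0$. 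Theorem~\ref{Thm:ARQG} and Proposition~\ref{Prop:QGMTauM} transfer this vanishing to $Gr_\mathbf{e}(M\oplus\tau M)$ for every $\mathbf{e}\neq\mathbf{dim}\,M$. Second, act on $M\oplus\tau M$ by the $\mathbb{C}^\times$ of weight $1$ on $M$ and weight $0$ on $\tau M$; fixed subrepresentations are again graded, so the induced fixed locus in $Gr_\mathbf{e}(M\oplus\tau M)$ is $\coprod_{\mathbf{f}+\mathbf{g}=\mathbf{e}}Gr_\mathbf{f}(M)\times Gr_\mathbf{g}(\tau M)$, and Bialynicki--Birula gives
$$
H^*\bigl(Gr_\mathbf{e}(M\oplus\tau M)\bigr)\cong\bigoplus_{\mathbf{f}+\mathbf{g}=\mathbf{e}}H^*\bigl(Gr_\mathbf{f}(M)\times Gr_\mathbf{g}(\tau M)\bigr)[-2n_{\mathbf{f},\mathbf{g}}]
$$
with each shift $n_{\mathbf{f},\mathbf{g}}\in\ZZ_{\geq 0}$. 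Since the left side is concentrated in even degrees and all shifts are even, each summand on the right must also be even-concentrated; the component $(\mathbf{f},\mathbf{g})=(\mathbf{e},\mathbf{0})$ equals $Gr_\mathbf{e}(M)$, so $H^{\mathrm{odd}}(Gr_\mathbf{e}(M))=0$, closing the induction. Positivity $\chi(Gr_\mathbf{e}(M))\geq 0$ follows immediately.

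The main obstacle is certifying that every indecomposable summand of the middle term $E$ lies strictly below $M$ in the chosen linear extension of $\Gamma_Q$; this rests on the mesh relation~\eqref{Eq:ARQuiverVertices} and the fact that a Dynkin quiver has no loops or multiple arrows, which together ensure that each summand of $E$ is a proper AR-predecessor of $M$. A secondary technicality is the evenness of the Bialynicki--Birula shifts, which is automatic because the torus weights act on tangent spaces through complex characters, so every shift is $2n_{\mathbf{f},\mathbf{g}}$ for a complex dimension.
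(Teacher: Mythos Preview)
Your argument is essentially the paper's own: induct along a linear extension of the AR quiver, apply Bialynicki--Birula to $Gr_\mathbf{e}(E)$ to reduce to its indecomposable summands, transfer via Theorem~\ref{Thm:ARQG} to $Gr_\mathbf{e}(M\oplus\tau M)$, and decompose once more to isolate $Gr_\mathbf{e}(M)$.

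The one step that needs more care is your displayed isomorphism $H^*(Gr_\mathbf{e}(M\oplus\tau M))\cong\bigoplus H^*(Gr_\mathbf{f}(M)\times Gr_\mathbf{g}(\tau M))[-2n_{\mathbf{f},\mathbf{g}}]$. Bialynicki--Birula by itself only gives an $\alpha$--partition into affine bundles, hence a filtration and a spectral sequence; the standard way to force degeneration is to know in advance that every fixed component has vanishing odd cohomology, which for the component $(\mathbf{f},\mathbf{g})=(\mathbf{e},\mathbf{0})$ is precisely what you are trying to prove. The paper avoids this circularity by working at the level of Poincar\'e polynomials: for smooth projective $X$ the identity $P_X(q)=\sum_i q^{2p_i}P_{V_i}(q)$ holds unconditionally (it is motivic, cf.\ \cite{BB74}), and since each $P_{V_i}$ has non-negative coefficients and the shifts are even, evenness of $P_X$ forces evenness of every $P_{V_i}$ by non-cancellation. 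Once you invoke that unconditional Poincar\'e-polynomial identity (or equivalently degeneration of the BB spectral sequence for smooth projective varieties via weights), your argument goes through and is equivalent to the paper's.
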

\begin{proof}
Since by assumption $M$ is indecomposable, its support is all contained in a connected subquiver of Q. In particular, we can assume that $Q$ itself is connected.  Let $\Gamma$ be its AR-quiver. Since $Q$ is connected, $\Gamma$ is connected. We consider a total ordering on the set $\{(i;k)|i\in Q_0,\, 0\leq k\leq k(i)\}$ of vertices of $\Gamma$ generated by the relation: $(i;k)\leq (j;\ell)$ if either $k<\ell$ or if $k=\ell$ then there is an arrow $i\rightarrow j$ in $Q^{op}$. We proceed by induction on such an ordering.

If $M=M(i;0)$ is projective, then the non--empty quiver Grassmannians associated with $M$ are points and hence the result holds. We hence proceed by induction, and we assume that $M=M(i;k)$ is a non--projective indecomposable module (hence $k>0$) and $\xymatrix@C=10pt{
0\ar[r]&\tau M\ar^\iota[r]&E\ar^\pi[r]&M\ar[r]&0
}$ is the almost split sequence ending in it. We prove that $Gr_\mathbf{e}(M)$ has no odd cohomology. Since $\tau M=M(i;k-1)$, we can assume by induction that $Gr_\mathbf{g}(\tau M)$ has no odd cohomology for every dimension vector $\mathbf{g}$. Let us show that the same holds for $Gr_\mathbf{e}(E)$. This is based on the following lemma. 
\begin{lem}\label{Lemma:Poincare}
Let $N_1$ and $N_2$ be two rigid $Q$--representations such that $N_1\oplus N_2$ is rigid. Let $\mathbf{e}$ be a dimension vector such that $Gr_\mathbf{e}(N_1\oplus N_2)$ is non--empty. Then the Poincar\'e polynomial of $Gr_\mathbf{e}(N_1\oplus N_2)$ is expressed in terms of the Poincar\'e polynomials of the quiver Grassmannians associated with $N_1$ and $N_2$ by the formula
\begin{align}\label{Eq:FormulaPoincareGeneral}
P_{Gr_\mathbf{e}(N_1\oplus N_2)}(q)&=&\sum_{\mathbf{f}+\mathbf{g}=\mathbf{e}}q^{2\langle\mathbf{f},\mathbf{dim}N_2-\mathbf{g}\rangle}P_{Gr_\mathbf{f}(N_1)}(q)P_{Gr_\mathbf{g}(N_2)}(q)\\\nonumber
&=&\sum_{\mathbf{f}+\mathbf{g}=\mathbf{e}}q^{2\langle\mathbf{g},\mathbf{dim }N_1-\mathbf{f}\rangle}P_{Gr_\mathbf{f}(N_1)}(q)P_{Gr_\mathbf{g}(N_2)}(q)
\end{align}
In particular, if $Gr_\mathbf{f}(N_1)$ and $Gr_\mathbf{g}(N_2)$ have no odd--cohomology for all $\mathbf{f}+\mathbf{g}=\mathbf{e}$, then the same holds for $Gr_\mathbf{e}(N_1\oplus N_2)$. 
\end{lem}
\begin{proof}
Recall the following fact (see \cite[Section~4]{BB} or \cite{I} or \cite[Sec.~1]{DLP}): let $X$ be a complex, projective and smooth variety on which the one--dimensional torus $T=\mathbf{C}^\ast$ acts algebraically. Let $X^T$ be the set of T--fixed points. The set $X^T$ is a smooth projective variety whose irreducible components we denote by $V_i$. Let $X_i=\{x\in X|\,\lim_{\lambda\rightarrow0}\lambda\cdot x\in V_i\}$ be the subset of points of $X$ which are attracted by points of $V_i$. The subsets $X_i$ of $X$ form an  \emph{$\alpha$--partition} (see \cite[Sec.~1]{DLP} or \cite[Section~3]{BB}) in the following sense:  they can be indexed $X_1,\cdots, X_n$  in such a way that $X_1\cup X_2\cup\cdots\cup X_i$ is closed in $X$ for every $i=1,\cdots, n$.   In \cite[Theorem~4.3]{BB} it is shown that the map $X_i\rightarrow V_i$ which sends $x\mapsto\lim_{\lambda\rightarrow 0}\lambda\cdot x$ is a locally trivial affine bundle (in the Zariski topology) whose fibers are complex affine spaces of dimension $p_i$. The integer $p_i$ is defined as follows: the action of $T$ on $X$ induces a linear action of $T$ on the tangent space $T_p(X)$ at the fixed points $p\in X^T$ and $p_i$ is the complex dimension of the subspace where the torus acts with positive weights (this dimension is locally constant and hence $p_i$ is well--defined for any irreducible component $V_i$ of $X^T$). 
Then the Poincar\'e polynomial of $X$ and the Poincar\'e polynomial of $V_i$ are related by the following formula (see \cite{BB74} for a proof over any algebraically closed field)
\begin{equation}\label{Eq:PoincareGeneral}
P_X(q)=\sum_{i=1}^n t^{2p_i}P_{V_i}(q).
\end{equation}
In particular, formula~\eqref{Eq:PoincareGeneral} shows that if $X^T$ has no odd cohomology then the same holds for $X$. 

Let us apply formula~\ref{Eq:PoincareGeneral} in our situation. Following Derksen, Weyman and Zelevinsky \cite[proof of Proposition~3.2]{DWZ2}, we let the 1--dimensional torus $T=\mathbf{C}^\ast$ act on $N_1\oplus N_2$ by $\lambda\cdot (n_1,n_2):=( n_1, \lambda n_2)$ for all $n_1\in N_1$ and $n_2\in N_2$. This defines an automorphism of the $Q$--representation $N_1\oplus N_2$ and hence it descends to an action of $T$ on the quiver Grassmannian $X:=Gr_\mathbf{e}(N_1\oplus N_2)$.   The space of T--fixed points 
is 
$$
X^T=\coprod_{\mathbf{f}+\mathbf{g}=\mathbf{e}}Gr_\mathbf{f}(N_1)\times Gr_\mathbf{g}(N_2).
$$
Given a T--fixed point $p=(L,j)$, the torus $T$ acts on the tangent space $T_p(X)\simeq\textrm{Hom }(L,N_1\oplus N_2/j(L))$ by $\lambda f(\ell):=\lambda\cdot f(\lambda^{-1}\cdot \ell)$. Since the point $p$ has the form $p=(L_1\oplus L_2, j)$, where $L_1\subseteq N_1$, $L_2\subseteq N_2$ and j is the diagonal embedding, the tangent space at $p$ is given by 
$$
T_p(X)\simeq \bigoplus_{i\in\{1,2\}}\Hom_Q(L_i, N_1/L_1)\oplus \Hom_Q(L_i, N_2/L_2).
$$ 
By definition, $T$ acts with weight zero on $\Hom_Q(L_1, N_1/L_1)\oplus \Hom_Q(L_2, N_2/L_2)$, with weight 1 on $\Hom_Q(L_1, N_2/L_2)$  and with weight (-1) on $\Hom_Q(L_2, N_1/L_1)$. 

Since $X$ is smooth and irreducible, and so are all the quiver Grassmannians $Gr_\mathbf{f}(N_1)$ and $Gr_\mathbf{g}(N_2)$, formula~\eqref{Eq:FormulaPoincareGeneral} is hence an immediate consequence of formula~\eqref{Eq:PoincareGeneral}, by taking into account  the classical  K\"unneth formula to write 
$$
P_{Gr_\mathbf{f}(N_1)\times Gr_\mathbf{g}(N_2)}(q)=P_{Gr_\mathbf{f}(N_1)}(q)P_{Gr_\mathbf{g}(N_2)}(q).
$$ 
The second equality in \eqref{Eq:FormulaPoincareGeneral} is obtained by Poincar\'e duality.
\end{proof}

In view of Lemma~\ref{Lemma:Poincare}, we see that $Gr_\mathbf{e}(E)$ has no odd cohomology. Indeed, we write $E=E(1)\oplus\cdots\oplus E(t)$ as a direct sum of its indecomposable direct summands  (it can be proved that $t\leq 3$, but this is not important). In view of \eqref{Eq:ARQuiverVertices}, each summand $E(j)$ has the form $E(j)=M(k_j;\ell)$ with $(k_j;\ell)<(i;k)$. By induction we can assume that the quiver Grassmannians associated with $E(j)$ have no odd cohomology. Lemma~\ref{Lemma:Poincare} implies that the same holds true for $Gr_\mathbf{e}(E)$.  

We are now ready to prove the statement for M. If $\mathbf{e}=\mathbf{dim }M$ then $Gr_\mathbf{dim M}(M)$ is a point and the result is clear.
For every $\mathbf{e}\neq \mathbf{dim }M$, by Theorem~\ref{Thm:ARQG}, the projective variety $X:=Gr_\mathbf{e}(M\oplus \tau M)$ is smooth and irreducible, and it is diffeomorphic to $Gr_\mathbf{e}(E)$. In particular, this holds if $\mathbf{e}$ is a dimension vector such that $Gr_\mathbf{e}(M)$ is non--empty. In this case, we have 
$$
\begin{array}{c}
P_{Gr_\mathbf{e}(E)}(q)=P_{Gr_{e}(M\oplus\tau M)}(q)=\\\\=q^{2\langle\mathbf{e},\mathbf{dim }\tau M\rangle}P_{Gr_{e}(M)}(q)+{\displaystyle\sum_{\mathbf{f}+\mathbf{g}=\mathbf{e},\, \mathbf{f}\neq\mathbf{e}}}q^{2\langle\mathbf{f},\mathbf{dim }\tau M-\mathbf{g}\rangle}P_{Gr_{f}(M)}(q)P_{Gr_{\mathbf{g}}(\tau M)}(q)
\end{array}
$$
By inductive hypothesis, the polynomials $P_{Gr_\mathbf{e}(E)}(q)$ and $P_{Gr_{\mathbf{g}}(\tau M)}(q)$ (for any $\mathbf{g}$) have no odd powers of q; thus the same holds for $P_{Gr_{f}(M)}(q)$ (for any $\mathbf{f}$ appearing in the right hand side), since possible odd powers of $Q$ would appear with the same sign, and hence cancellation could not occur. In particular, $P_{Gr_\mathbf{e}(M)}(q)$ has no odd powers of $q$, as desired.
\end{proof}

\begin{corollary}\label{Cor:NoOddExc}
Every quiver Grassmannian $Gr_\mathbf{e}(\tilde{M}_\mathbf{d})$ associated with the rigid module in $R_\mathbf{d}$ has zero odd cohomology. In particular, $\chi(Gr_\mathbf{e}(\tilde{M}_\mathbf{d}))\geq0$. 
\end{corollary}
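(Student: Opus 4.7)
The plan is to reduce the rigid case to the indecomposable case handled by Theorem~\ref{Thm:NoOddInd}, using Lemma~\ref{Lemma:Poincare} as the gluing tool. Write $\tilde{M}_\mathbf{d}=M_1\oplus M_2\oplus\cdots\oplus M_t$ as a direct sum of indecomposable summands. Since $\tilde{M}_\mathbf{d}$ is rigid, $\Ext^1_Q(\tilde{M}_\mathbf{d},\tilde{M}_\mathbf{d})=0$, and this vanishing is inherited by every partial sum; in particular each $N_k:=M_1\oplus\cdots\oplus M_k$ is rigid, and each decomposition $N_k=N_{k-1}\oplus M_k$ satisfies the hypotheses of Lemma~\ref{Lemma:Poincare} (both factors rigid, total sum rigid).

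Now proceed by induction on $k$. If $Gr_\mathbf{e}(\tilde{M}_\mathbf{d})$ is empty there is nothing to prove, so assume non--emptiness; by Theorem~\ref{Thm:RigQuivGras} the varieties $Gr_\mathbf{f}(N_k)$ are smooth and irreducible whenever non--empty, which justifies speaking of their Poincar\'e polynomials. The base case $k=1$ is precisely Theorem~\ref{Thm:NoOddInd} applied to the indecomposable $M_1$. For the inductive step, assume that $Gr_\mathbf{f}(N_{k-1})$ has no odd cohomology for every $\mathbf{f}$, and recall from Theorem~\ref{Thm:NoOddInd} that the same is true for $Gr_\mathbf{g}(M_k)$ for every $\mathbf{g}$. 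Applying Lemma~\ref{Lemma:Poincare} to $N_k=N_{k-1}\oplus M_k$ yields
$$
P_{Gr_\mathbf{e}(N_k)}(q)=\sum_{\mathbf{f}+\mathbf{g}=\mathbf{e}}q^{2\langle\mathbf{f},\mathbf{dim}\,M_k-\mathbf{g}\rangle}P_{Gr_\mathbf{f}(N_{k-1})}(q)P_{Gr_\mathbf{g}(M_k)}(q),
$$
which is a sum of products of polynomials in $q^2$ multiplied by even powers of $q$. Hence $P_{Gr_\mathbf{e}(N_k)}(q)$ itself is a polynomial in $q^2$, i.e. $Gr_\mathbf{e}(N_k)$ has no odd cohomology. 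Taking $k=t$ gives the claim for $\tilde{M}_\mathbf{d}$.

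Finally, the Euler characteristic statement is immediate: since every Betti number $b_i$ of $Gr_\mathbf{e}(\tilde{M}_\mathbf{d})$ is non--negative and $b_i=0$ for odd $i$, we obtain
$$
\chi(Gr_\mathbf{e}(\tilde{M}_\mathbf{d}))=\sum_i(-1)^ib_i=\sum_{i\text{ even}}b_i\geq 0.
$$
There is no real obstacle here beyond verifying that Lemma~\ref{Lemma:Poincare} applies at each step; the only mild subtlety is ensuring rigidity of the intermediate sums $N_k$, which follows because any direct summand of a rigid module is itself rigid and orthogonal extensions with the complementary summand vanish.
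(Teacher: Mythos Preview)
Your proof is correct and follows exactly the paper's approach: decompose $\tilde{M}_\mathbf{d}$ into indecomposable summands and combine Theorem~\ref{Thm:NoOddInd} with Lemma~\ref{Lemma:Poincare}. The paper's proof is a one--line sketch leaving the induction implicit, whereas you have spelled out the inductive step and the verification that the partial sums $N_k$ remain rigid; this is a welcome elaboration but not a different argument.
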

\begin{proof}
Let $\tilde{M}_\mathbf{d}=E(1)\oplus\cdots\oplus E(s)$ be the decomposition of $\tilde{M}_\mathbf{d}$ into its indecomposable direct summands. Lemma~\ref{Lemma:Poincare} together with Theorem~\ref{Thm:NoOddInd} implies the result. 
\end{proof}

\begin{corollary}
Let $\mathbf{e}$ and $\mathbf{d}$ be dimension vectors such that $\mathbf{d-e}$ is again a dimension vector. If $[\tilde{M}_\mathbf{e},\tilde{M}_{\mathbf{d-e}}]^1=0$ then every smooth quiver Grassmannian $Gr_\mathbf{e}(M)$ of dimension $\langle\mathbf{e},\mathbf{d-e}\rangle$ associated with $M\in R_\mathbf{d}$ has no odd cohomology. In particular $\chi(Gr_\mathbf{e}(M))\geq0$.
\end{corollary}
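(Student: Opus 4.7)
The plan is to reduce this statement directly to Corollary~\ref{Cor:NoOddExc}, which handles the rigid case, by invoking Theorem~\ref{Thm:New} to pass between any smooth minimal-dimensional quiver Grassmannian over $R_\mathbf{d}$ and the one sitting over the generic point $\tilde{M}_\mathbf{d}$. The whole argument is essentially a matter of verifying that the relevant sets $\mathcal{S}_{\mathbf{e},\mathbf{d}}$ of Proposition~\ref{Rem:SmoothMinDim} are well-populated under our hypothesis.

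First, I would unpack what it means for $Gr_\mathbf{e}(M)$ to be smooth of dimension $\langle\mathbf{e},\mathbf{d-e}\rangle$: by the very definition of the locus $\mathcal{S}_{\mathbf{e},\mathbf{d}}$ introduced in Proposition~\ref{Rem:SmoothMinDim}, this is precisely the condition that $M\in\mathcal{S}_{\mathbf{e},\mathbf{d}}$ (non-emptiness being built into the statement). Next, under the hypothesis $[\tilde{M}_\mathbf{e},\tilde{M}_{\mathbf{d-e}}]^1=0$, Corollary~\ref{Cor:Criterion NonEmpty} gives non-emptiness of $Gr_\mathbf{e}(\tilde{M}_\mathbf{d})$, and then Theorem~\ref{Thm:RigQuivGras} tells us that $Gr_\mathbf{e}(\tilde{M}_\mathbf{d})$ is smooth and irreducible of the correct dimension $\langle\mathbf{e},\mathbf{d-e}\rangle$. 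Hence $\tilde{M}_\mathbf{d}$ also lies in $\mathcal{S}_{\mathbf{e},\mathbf{d}}$.

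With both $M$ and $\tilde{M}_\mathbf{d}$ inside the non-empty locus $\mathcal{S}_{\mathbf{e},\mathbf{d}}$, Theorem~\ref{Thm:New} applies and gives a diffeomorphism
\[
Gr_\mathbf{e}(M)\;\cong\;Gr_\mathbf{e}(\tilde{M}_\mathbf{d}).
\]
In particular the two varieties share the same Poincar\'e polynomial. Now Corollary~\ref{Cor:NoOddExc} says that $Gr_\mathbf{e}(\tilde{M}_\mathbf{d})$ has vanishing odd cohomology, so the same holds for $Gr_\mathbf{e}(M)$. The positivity $\chi(Gr_\mathbf{e}(M))\geq 0$ is then immediate, since the Euler characteristic is the sum of the (nonnegative) even Betti numbers.

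There is no serious obstacle: the whole content of the corollary is absorbed by Theorem~\ref{Thm:New} (which transports topological invariants across $\mathcal{S}_{\mathbf{e},\mathbf{d}}$) together with Corollary~\ref{Cor:NoOddExc} (which provides the odd-vanishing at the generic point). The only small care needed is to check the two hypothesis-matchings described above, namely that $M$ and $\tilde{M}_\mathbf{d}$ both belong to $\mathcal{S}_{\mathbf{e},\mathbf{d}}$; both verifications are one-liners using the earlier propositions.
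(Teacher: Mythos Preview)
Your proof is correct and follows essentially the same route as the paper: invoke Theorem~\ref{Thm:New} to obtain a diffeomorphism $Gr_\mathbf{e}(M)\cong Gr_\mathbf{e}(\tilde{M}_\mathbf{d})$, then appeal to Corollary~\ref{Cor:NoOddExc}. The paper's proof is a one-liner that leaves implicit the verification that both $M$ and $\tilde{M}_\mathbf{d}$ lie in $\mathcal{S}_{\mathbf{e},\mathbf{d}}$, which you spell out carefully.
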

\begin{proof}
In view of Theorem~\ref{Thm:New},  $Gr_\mathbf{e}(M)$ is diffeomorphic to $Gr_\mathbf{e}(\tilde{M}_\mathbf{d})$ which has the required property in view of Corollary~\ref{Cor:NoOddExc}.
\end{proof}

\begin{remark}
The fact that the Euler characteristic of every quiver Grassmannian of Dynkin type is non--negative was proved by Caldero and Keller \cite[Theorem~3]{CK1} using Hall algebras and Lusztig's canonical bases. 

The fact that quiver Grassmannians associated with rigid representations of an arbitrary acyclic quiver have no odd cohomology, was proved by F.~Qin in \cite[Theorem~3.2.6]{Qin} as a consequence of its formula for the quantum F--polynomials of quantum cluster monomials. A geometric proof of this fact was obtained by Nakajima \cite[Theorem~A.1]{Naka}.  
\end{remark}

\subsection{Homology}
In this section we analyze the homology groups of the quiver Grassmannians (of Dynkin type) which are smooth and of minimal dimension. 
Recall from \cite[Sec.~1.7]{DLP} that  an algebraic variety $X$ is said to have property (\textbf{S})  if the following two properties are satisfied:
\begin{itemize}
\item[(\textbf{S}1)] $H_i(X)$ is zero if i is odd and it has no torsion if i is even; 
\item[(\textbf{S}2)] the cycle map $\varphi_i: A_i(X)\rightarrow H_{2i}(X)$ is an isomorphism for all i.  
\end{itemize}
(Here $H_i(X)$ denotes the Borel--Moore $i$--th homology group and $A_i(X)$ is the group generated by k--dimensional irreducible subvarieties modulo rational equivalences (see \cite[Sec.~1.3]{Fu}).) 
\begin{theorem}\label{Thm:(S)}
Let $Q$ be a Dynkin quiver and let $M$ be an indecomposable $Q$--representation. Let $\mathbf{e}\in\mathbf{Z}^{Q_0}_{\geq0}$ be a dimension vector such that the quiver Grassmannian $Gr_\mathbf{e}(M)$ is non--empty. Then $Gr_\mathbf{e}(M)$ has property (\textbf{S}1). 
\end{theorem}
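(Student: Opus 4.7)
The plan is to upgrade the AR-induction of Theorem~\ref{Thm:NoOddInd} from rational Betti numbers to integer cohomology. Since $M$ is indecomposable its support lies in a single connected component of $Q$, and we may therefore replace $Q$ by that component and induct on the total order on the vertices $(i;k)$ of $\Gamma_Q$ introduced in the proof of Theorem~\ref{Thm:NoOddInd}. For the base case $M=M(i;0)$ projective, every non-empty $Gr_\mathbf{e}(M)$ is a reduced point, so (\textbf{S}1) is immediate. For the inductive step fix $M=M(i;k)$ with $k>0$, consider the almost split sequence $0\to\tau M\to E\to M\to 0$ and decompose $E=E(1)\oplus\cdots\oplus E(t)$; by \eqref{Eq:ARQuiverVertices} both $\tau M$ and every $E(j)$ strictly precede $M$ in the chosen order, so by the inductive hypothesis each $Gr_\mathbf{h}(\tau M)$ and each $Gr_\mathbf{h}(E(j))$ satisfies (\textbf{S}1).

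I would first establish (\textbf{S}1) for $Gr_\mathbf{e}(E)$. The torus $T=\mathbf{C}^\ast$ acting on $E$ with pairwise distinct weights on the summands induces an action on $Gr_\mathbf{e}(E)$ whose fixed locus is $\coprod_{\sum\mathbf{h}_j=\mathbf{e}}\prod_j Gr_{\mathbf{h}_j}(E(j))$. K\"unneth preserves (\textbf{S}1) for products of torsion-free, even-concentrated spaces, so every fixed component has (\textbf{S}1); the forward direction of the $\alpha$-partition argument in \cite[\S 1.7]{DLP} applied to the Bialynicki--Birula decomposition then transports (\textbf{S}1) to $Gr_\mathbf{e}(E)$. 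By Theorem~\ref{Thm:ARQG} the diffeomorphism $Gr_\mathbf{e}(M\oplus\tau M)\simeq Gr_\mathbf{e}(E)$ (for $\mathbf{e}\neq\mathbf{dim}\,M$) transports (\textbf{S}1) to $Gr_\mathbf{e}(M\oplus\tau M)$; the remaining case $\mathbf{e}=\mathbf{dim}\,M$ is handled by Corollary~\ref{Cor:QGAlmostSplit}, which reduces $Gr_\mathbf{e}(M)$ to a reduced point.

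To isolate $Gr_\mathbf{e}(M)$ itself, I would now invoke the torus action $\lambda\cdot(n_1,n_2)=(n_1,\lambda n_2)$ on $M\oplus\tau M$ used in Lemma~\ref{Lemma:Poincare}; its fixed locus on $X:=Gr_\mathbf{e}(M\oplus\tau M)$ is $\coprod_{\mathbf{f}+\mathbf{g}=\mathbf{e}}Gr_\mathbf{f}(M)\times Gr_\mathbf{g}(\tau M)$, and $Gr_\mathbf{e}(M)$ appears as the fixed component $(\mathbf{f},\mathbf{g})=(\mathbf{e},\mathbf{0})$. Since $X$ has (\textbf{S}1) and in particular no odd cohomology, it is equivariantly formal, and the integral Bialynicki--Birula/Morse-Bott decomposition yields an isomorphism of graded abelian groups
\[
H^\ast(X;\mathbf{Z})\;\cong\;\bigoplus_{\mathbf{f}+\mathbf{g}=\mathbf{e}}H^{\ast-2p_{(\mathbf{f},\mathbf{g})}}(Gr_\mathbf{f}(M)\times Gr_\mathbf{g}(\tau M);\mathbf{Z}).
\]
Torsion-freeness and even-concentration of the left-hand side pass to each summand, and the $(\mathbf{e},\mathbf{0})$ summand yields (\textbf{S}1) for $Gr_\mathbf{e}(M)$. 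The main technical obstacle is exactly this integral splitting: one needs to show that the $\alpha$-partition long exact sequences for $X$ degenerate into short exact sequences that are split over $\mathbf{Z}$, so that torsion-freeness of $X$ descends to torsion-freeness of each fixed component. This is the reverse of the forward $\alpha$-partition step used for $Gr_\mathbf{e}(E)$, and it is here that the absence of odd cohomology of $X$ plays the decisive role.
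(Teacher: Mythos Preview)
Your overall architecture coincides with the paper's: same AR-induction, same base case, same reduction to $Gr_\mathbf{e}(E)$ via the forward $\alpha$-partition argument of \cite[\S1.8]{DLP}, and same transport of (\textbf{S}1) to $X=Gr_\mathbf{e}(M\oplus\tau M)$ via the diffeomorphism of Theorem~\ref{Thm:ARQG}. The divergence, and the gap, is in the last step.

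You assert that, because $X$ has (\textbf{S}1), the Bialynicki--Birula filtration splits integrally as
\[
H^\ast(X;\mathbf{Z})\;\cong\;\bigoplus_{\mathbf{f}+\mathbf{g}=\mathbf{e}}H^{\ast-2p_{(\mathbf{f},\mathbf{g})}}(Gr_\mathbf{f}(M)\times Gr_\mathbf{g}(\tau M);\mathbf{Z}),
\]
and then read off (\textbf{S}1) for the summand $(\mathbf{f},\mathbf{g})=(\mathbf{e},\mathbf{0})$. You yourself flag this as ``the main technical obstacle'', and indeed it is not justified: equivariant formality from vanishing odd cohomology is a rational statement, and the forward $\alpha$-partition argument in \cite{DLP} assumes (\textbf{S}) on the \emph{pieces} to conclude it on the total space, not the reverse. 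Knowing only that $H^\ast(X;\mathbf{Z})$ is free and even does not by itself force each fixed component to be torsion-free; the long exact sequences for the filtration can have nontrivial connecting maps absorbing torsion from the pieces.

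The paper avoids this by running a \emph{second} induction, on $\mathbf{e}$, so that every $Gr_\mathbf{f}(M)$ with $\mathbf{f}<\mathbf{e}$ already has (\textbf{S}1), and then using \emph{two} torus actions on $M\oplus\tau M$ rather than one. With $\lambda\cdot(m,n)=(m,\lambda n)$ the stratum over $Gr_\mathbf{e}(M)$ is the \emph{open} attracting cell $U$; the long exact sequence gives $H_{2i+1}(U)\hookrightarrow H_{2i}(Y)$, hence torsion-free, while Theorem~\ref{Thm:NoOddInd} (which must be invoked as a separate input) forces $H_{2i+1}(U)\otimes\mathbf{Q}=0$, so $H_{2i+1}(Gr_\mathbf{e}(M))=0$. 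With the dual action $\lambda\ast(m,n)=(\lambda m,n)$, $Gr_\mathbf{e}(M)$ becomes the \emph{closed} stratum $Y'$; the secondary induction on $\mathbf{e}$ gives $H_{2i+1}(U')=0$, and then $H_{2i}(Gr_\mathbf{e}(M))\hookrightarrow H_{2i}(X)$ yields torsion-freeness. Your proposal is missing both the induction on $\mathbf{e}$ and this open/closed duality trick, and without them the reverse descent of (\textbf{S}1) remains a claim rather than a proof.
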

\begin{proof}
Since by assumption $M$ is indecomposable, its support is all contained in a connected subquiver of Q. In particular, we can assume that $Q$ itself is connected.  Let $\Gamma$ be its AR-quiver. Since $Q$ is connected, $\Gamma$ is connected and it is acyclic. We choose a total ordering on the set $\{(i;k)|i\in Q_0,\, 0\leq k\leq k(i)\}$ of vertices of $\Gamma$ so that $(i;k)\leq (j;\ell)$ if  $k<\ell$ and $(i;k)\leq (j;k)$ whenever there is an arrow $i\rightarrow j$ in $Q^{op}$. We proceed by induction on such an ordering. 

If  $M=M(i;0)$ is projective indecomposable, then the non empty quiver Grassmannians associated with $M$ are points. In particular the claim holds in this case. 

We hence assume that $M=M(i;k)$ is not projective (i.e. $k>0$), and let 
$$
0\rightarrow \tau M\rightarrow E\rightarrow M\rightarrow 0
$$
be the almost split sequence ending in M. If $\mathbf{e}=\mathbf{dim}\, M$, then the quiver Grassmannian $Gr_{\mathbf{dim}\,M}(M)$ is a reduced point, and the claim holds. We hence fix  $\mathbf{e}\neq\mathbf{dim}\,M$ and such that $Gr_\mathbf{e}(M)$ is non--empty. Since $Gr_\mathbf{0}(M)$ is a point, by induction we can assume that every non--empty quiver Grassmannian $Gr_\mathbf{f}(M)$ has property (\textbf{S}1), for every $\mathbf{f}<\mathbf{e}$ (here $\mathbf{f}<\mathbf{e}$ means that $\mathbf{e-f}\in\ZZ_{\geq0}^{Q_0}$ and $\mathbf{e}\neq \mathbf{f}$). Since $\tau M=M(i;k-1)$, the inductive hypothesis guarantees that $Gr_\mathbf{g}(\tau M)$ has property (\textbf{S}1) for every $\mathbf{g}$. Let us show that by induction we can also assume that $Gr_\mathbf{e}(E)$ has property (\textbf{S}1). 
We decompose $E=E(1)\oplus\cdots\oplus E(t)$ as direct sum of its indecomposable direct summands. We let the torus $T=\mathbf{C}^\ast$ act on $Gr_\mathbf{e}(E)$ by $\lambda\cdot (x_1,x_2,\cdots, x_t):=(\lambda x_1, x_2, \cdots, x_t)$ and we consider the induced $\alpha$--partition (since E is rigid, $Gr_\mathbf{e}(E)$ is smooth and irreducible):
$$
Gr_\mathbf{e}(E)=\coprod_{\mathbf{f}}Gr_\mathbf{e}(E)^{\mathbf{f}}
$$
where every piece is the total space of an affine bundle 
$$
\xymatrix{
Gr_\mathbf{e}(E)^{\mathbf{f}}\ar@{->>}[r]&Gr_{\mathbf{f}}(E(1))\times Gr_{\mathbf{e-f}}(E(2)\oplus\cdots\oplus E(t)).
}
$$
By \eqref{Eq:ARQuiverVertices}, $E(1),\cdots, E(t)$ correspond to vertices of $\Gamma$ which are smaller than $(i;k)$, and hence by induction we can assume that both $Gr_{\mathbf{f}}(E(1))$ and $Gr_{\mathbf{e-f}}(E(2)\oplus\cdots\oplus E(t))$ have property (\textbf{S}1). In particular, their product $Gr_{\mathbf{f}_1}(E(1))\times Gr_{\mathbf{f}_2}(E(2)\oplus\cdots\oplus E(t))$ has property (\textbf{S}1). We will use freely the following well--known fact: if $\xymatrix@C=15pt{E\ar@{->>}[r]&X}$ is an affine bundle (locally trivial in the Zarisky topology) and $X$ has property (\textbf{S}) then E has property (\textbf{S}) (see \cite[Sec.~1.8]{DLP}).  The stratum $Gr_\mathbf{e}(E)^{\mathbf{f}}$ has property (\textbf{S}1) and hence, by \cite[Sec.~1.8]{DLP}, we conclude that the whole variety $Gr_\mathbf{e}(E)$ has property (\textbf{S}1). 

We can now prove that $Gr_\mathbf{e}(M)$ has property (\textbf{S}1). In view of Theorem~\ref{Thm:ARQG}, $Gr_\mathbf{e}(E)$ is diffeomorphic to $Gr_\mathbf{e}(M\oplus\tau M)$. In particular, $Gr_\mathbf{e}(M\oplus\tau M)$ has property (\textbf{S}1) and it is smooth and irreducible.  We let the torus $T=\mathbf{C}^\ast$ act on $Gr_\mathbf{e}(M\oplus\tau M)$ by 
\begin{equation}\label{Eq.TorusAct(S)}
\lambda\cdot (m,n):=(m, \lambda  n)
\end{equation} 
for every $\lambda\in\mathbf{C}^\ast$, $m\in M$ and $n\in \tau M$. The variety 
$Gr_\mathbf{e}(M\oplus\tau M)$ has a corresponding $\alpha$--partition
$$
Gr_\mathbf{e}(M\oplus\tau M)=\coprod_{\mathbf{f}} Gr_\mathbf{e}(M\oplus\tau M)^\mathbf{f}
$$
where $Gr_\mathbf{e}(M\oplus\tau M)^\mathbf{f}$ is the total space of an affine bundle
$$
\xymatrix{
Gr_\mathbf{e}(M\oplus\tau M)^\mathbf{f}\ar@{->>}[r]&Gr_\mathbf{f}(M)\times Gr_\mathbf{e-f}(\tau M)
}
$$
of rank $\langle\mathbf{f},\mathbf{dim}\,\tau\,M-\mathbf{e+f}\rangle$ (see Lemma~\ref{Lemma:Poincare}). For simplicity of notation, we put $X:=Gr_\mathbf{e}(M\oplus\tau M)$. With our choice \eqref{Eq.TorusAct(S)} of the torus action, we see that the stratum $X^\mathbf{0}\simeq Gr_\mathbf{e}(\tau M)$ is closed in $X$ while the stratum $X^\mathbf{e}$ is open in $X$. We put $U:=X^\mathbf{e}$ and we notice that $U$ is the total space of an affine bundle over $Gr_\mathbf{e}(M)$.  The closed complement of $U$ is 
$$
Y:=\coprod_{\mathbf{f}<\mathbf{e}}X^\mathbf{f}.
$$
For every $\mathbf{f}<\mathbf{e}$ the stratum $X^\mathbf{f}$ is the total space of an affine  bundle on $Gr_\mathbf{f}(M)\times Gr_\mathbf{e-f}(\tau M)$. Since by inductive hypothesis $Gr_\mathbf{f}(M)\times Gr_\mathbf{e-f}(\tau M)$ has property (\textbf{S}1),  each stratum $X^\mathbf{f}$ has property (\textbf{S}1) and  hence $Y$ itself has property (\textbf{S}1). For every i, we have an exact sequence
$$
\xymatrix@C=15pt{
H_{2i+1}(X)\ar[r]&H_{2i+1}(U)\ar^h[r]&H_{2i}(Y)\ar^f[r]&H_{2i}(X)\ar^g[r]&H_{2i}(U)\ar[r]&H_{2i-1}(Y)
}
$$
Both $X$ and $Y$ have property (\textbf{S}1). Let us show that the same holds for $U$. Since $H_{2i+1}(X)=0$ the homomorphism $h$ is injective and hence $H_{2i+1}(U)$ is torsion--free, being a subgroup of the torsion--free group $H_{2i}(Y)$. Since there is an affine bundle $\xymatrix{U\ar@{->>}[r]&Gr_\mathbf{e}(M)}$ and by Theorem~\ref{Thm:NoOddInd} the base space $Gr_\mathbf{e}(M)$ has no odd cohomology, we see that also $U$ has no odd cohomology. By Poincar\'e duality, this means that $H_{2i+1}(U)\otimes \mathbf{Q}$ is zero. But since $H_{2i+1}(U)$ is torsion--free, we conclude that $H_{2i+1}(U)=0$. It follows that $H_{2i+1}(Gr_\mathbf{e}(M))=0$.  It remains to check that $H_{2i}(Gr_\mathbf{e}(M))$ is torsion--free. We cannot see this from the sequence above. Instead, we change torus action on $Gr_\mathbf{e}(M\oplus \tau M)$: for every $\lambda\in\mathbf{C}^\ast$, $m\in M$ and $n\in \tau M$ we define
\begin{equation}\label{Eq:TorusAction2(S)}
\lambda\ast(m,n):=(\lambda m, n).
\end{equation}
The corresponding $\alpha$--partition is 
$$
Gr_\mathbf{e}(M\oplus\tau M)=\coprod_{\mathbf{f}} Gr_\mathbf{e}(M\oplus\tau M)^\mathbf{f}
$$
where $Gr_\mathbf{e}(M\oplus\tau M)^\mathbf{f}$ is the total space of an affine  bundle
$$
\xymatrix{
Gr_\mathbf{e}(M\oplus\tau M)^\mathbf{f}\ar@{->>}[r]&Gr_\mathbf{f}(M)\times Gr_\mathbf{e-f}(\tau M)
}
$$
of rank $\langle\mathbf{e-f},\mathbf{dim}\,M-\mathbf{f}\rangle$ (see Lemma~\ref{Lemma:Poincare}). This $\alpha$--partition is dual with respect to the one above: attracting sets for one action are repulsive sets for the other. In particular, the closed stratum  is now $Y':=Gr_\mathbf{e}(M\oplus\tau M)^\mathbf{e}\simeq Gr_\mathbf{e}(M)$.  The complement is the open subset
$$
U':=X-Y'=\coprod_{\mathbf{f}<\mathbf{e}} Gr_\mathbf{e}(M\oplus\tau M)^\mathbf{f}.
$$
For all i, we have an exact sequence: 
$$
\xymatrix{
H_{2i+1}(U')\ar[r]&H_{2i}(Gr_\mathbf{e}(M))\ar[r]&H_{2i}(X).
}
$$ 
By induction we can assume that $H_{2i+1}(U')=0$. It follows that $H_{2i}(Gr_\mathbf{e}(M))$ is  a subgroup of $H_{2i}(X)$ which is torsion--free by assumption. We conclude that $H_{2i}(Gr_\mathbf{e}(M))$ is torsion--free and hence $Gr_\mathbf{e}(M)$ has property (\textbf{S}1). 
\end{proof}
\begin{remark}
I conjecture that $Gr_\mathbf{e}(M)$ has property (\textbf{S}2) as well. An evidence for this conjecture is given by the fact that  the Hodge number $h^{p,q}(Gr_\mathbf{e}(\tilde{M}_\mathbf{d}))=0$ if $p\neq q$ (this can be proved with a similar argument as in Corollary~\ref{Cor:NoOddExc}). Indeed the image of the cycle map $\varphi_p$ is always in the $(p,p)$--component.

What is missing to prove this conjecture, is the fact that such a property is not preserved under deformation. This is because the chow groups $A_i(X)$ are not invariant under deformation, while the homology groups are. 

In type A, since  $Gr_\mathbf{e}(E)$ and $Gr_\mathbf{e}(M\oplus\tau M)$ are isomorphic, then this applies.  Actually, for quivers of type A equioriented, a much stronger result is valid: every quiver Grassmannian admits a cellular decomposition (i.e. an $\alpha$--partition into affine spaces)
\cite[Theorem~12]{CFFGR} and a space admitting a cellular decomposition has property (\textbf{S}) \cite[Sec. 1.10]{DLP}. Caldero and Keller \cite{CK1} conjecture that this is true for any Dynkin quiver. 
\end{remark}

\begin{corollary}
Every smooth quiver Grassmannian (of Dynkin type) of minimal dimension has property (\textbf{S}1). 
\end{corollary}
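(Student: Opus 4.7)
The plan is to reduce the general case to the case of the rigid representation $\tilde{M}_\mathbf{d}$ and then bootstrap from the indecomposable result of Theorem~\ref{Thm:(S)}. Concretely, let $X = Gr_\mathbf{e}(M)$ be smooth of minimal dimension $\langle\mathbf{e},\mathbf{d-e}\rangle$, with $\mathbf{d}=\mathbf{dim}\,M$. By definition $M \in \mathcal{S}_{\mathbf{e},\mathbf{d}}$, and Proposition~\ref{Rem:SmoothMinDim} gives $[\tilde{M}_\mathbf{e},\tilde{M}_\mathbf{d-e}]^1=0$, so that $\tilde{M}_\mathbf{d}$ also lies in $\mathcal{S}_{\mathbf{e},\mathbf{d}}$. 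Theorem~\ref{Thm:New} then furnishes a diffeomorphism $X \cong Gr_\mathbf{e}(\tilde{M}_\mathbf{d})$. Borel--Moore homology and its torsion subgroups are topological invariants, so property (\textbf{S}1) transfers across this diffeomorphism, and it suffices to establish (\textbf{S}1) for $Y := Gr_\mathbf{e}(\tilde{M}_\mathbf{d})$.

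To handle $Y$, I would decompose $\tilde{M}_\mathbf{d} = E(1) \oplus \cdots \oplus E(s)$ into indecomposable summands and induct on $s$. The base case $s=1$ is exactly Theorem~\ref{Thm:(S)}. For the inductive step, write $\tilde{M}_\mathbf{d} = N_1 \oplus N_2$ with $N_1 = E(1)$ and $N_2 = E(2)\oplus\cdots\oplus E(s)$, and imitate the torus construction from Lemma~\ref{Lemma:Poincare}: let $\mathbf{C}^\ast$ act by $\lambda\cdot(n_1,n_2) = (\lambda n_1, n_2)$. Since $\tilde{M}_\mathbf{d}$ is rigid, $Y$ is smooth and irreducible by Theorem~\ref{Thm:RigQuivGras}, so the Białynicki--Birula theorem provides an $\alpha$--partition whose fixed-point components are $Gr_\mathbf{f}(N_1) \times Gr_{\mathbf{e-f}}(N_2)$ and whose attracting strata are locally trivial affine bundles over them.

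Observe that $N_1$ and $N_2$ are themselves rigid (being direct summands of a rigid module), so $Gr_\mathbf{f}(N_1)$ and $Gr_{\mathbf{e-f}}(N_2)$, when non-empty, are smooth of minimal dimension. By the inductive hypothesis $Gr_{\mathbf{e-f}}(N_2)$ has property (\textbf{S}1), and by Theorem~\ref{Thm:(S)} the same holds for $Gr_\mathbf{f}(N_1)$; the Künneth formula transfers (\textbf{S}1) to the product, and the principle from \cite[Sec.~1.8]{DLP} that Zariski-locally-trivial affine bundles preserve property (\textbf{S}) transfers it to each stratum. The long exact sequences in Borel--Moore homology associated with the $\alpha$--partition, combined exactly as in the closing portion of the proof of Theorem~\ref{Thm:(S)}, then assemble the vanishing of odd homology and the absence of torsion in even homology on the total space $Y$.

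The only real obstacle is bookkeeping: one must run the homology long exact sequence inductively over the pieces of the $\alpha$--partition, tracking both the odd-degree vanishing and the torsion-freeness simultaneously, exactly as was done in the indecomposable case. Since the dimension vectors on the two factors are componentwise smaller than $\mathbf{e}$ and the number of indecomposable summands strictly decreases when passing from $\tilde{M}_\mathbf{d}$ to $N_2$, the induction closes without circularity, and no new geometric input beyond Theorem~\ref{Thm:New} and Theorem~\ref{Thm:(S)} is needed.
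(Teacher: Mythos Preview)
Your proposal is correct and follows essentially the same route as the paper: reduce to $Gr_\mathbf{e}(\tilde{M}_\mathbf{d})$ via Theorem~\ref{Thm:New}, then induct on the number of indecomposable summands using the Bia{\l}ynicki--Birula $\alpha$--partition, with Theorem~\ref{Thm:(S)} as the base case. One small simplification: in this corollary, unlike in Theorem~\ref{Thm:(S)}, \emph{every} stratum of the $\alpha$--partition already has property~(\textbf{S}1) by induction, so you can invoke \cite[Lemmas~1.8 and~1.9]{DLP} directly rather than rerunning the two-torus-action bookkeeping from the closing portion of Theorem~\ref{Thm:(S)}.
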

\begin{proof}
Let $\tilde{M}_\mathbf{d}$ be the rigid $Q$--represention in $R_\mathbf{d}$ and let $\mathbf{e}$ be a dimension vector such that $Gr_\mathbf{e}(\tilde{M}_\mathbf{d})$ is non--empty. In view of Theorem~\ref{Thm:New} it is enough to prove that $Gr_\mathbf{e}(\tilde{M}_\mathbf{d})$ has property (\textbf{S}1).  We proceed by induction on the number of indecomposable direct summands of $\tilde{M}_\mathbf{d}$. If $\tilde{M}_\mathbf{d}$ is indecomposable, then the claim is just Theorem~\ref{Thm:(S)} above. We write $\tilde{M}_\mathbf{d}=E(1)\oplus\cdots\oplus E(t)$ as a direct sum of its indecomposable direct summands ($t\geq 2$), we let the torus $T=\mathbf{C}^\ast$ act on $Gr_\mathbf{e}(\tilde{M}_\mathbf{d})$ by $\lambda\cdot (x_1,x_2,\cdots, x_t):=(\lambda x_1, x_2, \cdots, x_t)$ for any $x_i\in E(i)$. We consider the induced $\alpha$--partition
$$
Gr_\mathbf{e}(\tilde{M}_\mathbf{d})=\coprod_{\mathbf{f}}Gr_\mathbf{e}(\tilde{M}_\mathbf{d})^{\mathbf{f}}
$$
where every piece is the total space of an affine bundle 
$$
\xymatrix{
Gr_\mathbf{e}(\tilde{M}_\mathbf{d})^{\mathbf{f}}\ar@{->>}[r]&Gr_{\mathbf{f}}(E(1))\times Gr_{\mathbf{e-f}}(E(2)\oplus\cdots\oplus E(t))
}
$$
(the fact that this is an affine  bundle follows from \cite{BB} by using that $Gr_\mathbf{e}(\tilde{M}_\mathbf{d})$ is a smooth and irreducible projective variety). By induction we assume that both $Gr_{\mathbf{f}}(E(1))$ and $Gr_{\mathbf{e-f}}(E(2)\oplus\cdots\oplus E(t))$ have property (\textbf{S}1). In view of \cite[Lemma~1.9]{DLP},  $Gr_\mathbf{e}(\tilde{M}_\mathbf{d})^{\mathbf{f}}
$ has property (\textbf{S}1). In view of \cite[Lemma~1.8]{DLP} the whole variety $Gr_\mathbf{e}(\tilde{M}_\mathbf{d})$ has property (\textbf{S}1). 
\end{proof}

\section{Applications to cluster algebras}\label{Sec:GVect}
In this section we provide a new proof of the formula of Caldero and Chapoton \cite{CC}. This formula associates to each indecomposable $Q$--representation $M$ a Laurent polynomial CC($M$) in $n$--variables, so that the indecomposables correspond to the non--initial cluster variables of the cluster algebras associated with $Q$. The formula can be formulated in full generality, but we restrict to the coefficient--free setting, for simplicity. The CC--formula consists of two ingredients: the $\mathbf{g}$--vector and the F--polynomial of M. 

\subsection{\textbf{g}--vector of a $Q$--representation}
Let $A=KQ$ be the path algebra of a Dynkin quiver $Q$, over the field of complex numbers K. The Grothendieck group of $A$, denoted with $K_0(Q)$,  is, by definition, the free abelian group generated by isoclasses $[M]$ of  A-modules factored out by the subgroup generated by  $[L]+[N]-[M]$ whenever there is a short exact sequence $0\rightarrow L\rightarrow M\rightarrow N\rightarrow 0$ in $A$--mod. The set of elements $[M]$ such that $M$ is an indecomposable $A$--module is denoted with $\textrm{ind }K_0(Q)$. By definition, if $M\simeq \bigoplus M(k)^{m_k}$ is the decomposition of an $A$--module $M$ as a direct sum of its indecomposable direct summands, then $[M]=\sum a_k[M_k]$. Thanks to the Jordan-H\"older property of $A$--mod, the set $\mathcal{S}:=\{[S_i]|\, i\in Q_0\}$ of isoclasses of simple $A$--modules, form a basis of $K_0(Q)$ and the coordinate vector of an element $[M]\in K_0(Q)$ in this basis is nothing but its dimension vector $\mathbf{dim}\,M$ if $M\in A-\textrm{mod}$. 
We fix the standard basis $\{\alpha_i\}_{i\in Q_0}$ of $\ZZ^{Q_0}$ and the map $[S_i]\mapsto \alpha_i$ identifies $K_0(Q)$ with $\ZZ^{Q_0}$ so that $\mathbf{dim}\, M=\sum_{i\in Q_0}d_i\alpha_i$. In view of its homological interpretation \eqref{Eq:EulForm}, the Euler form descends to a bilinear form on $K_0(Q)$ and hence on $\ZZ^{Q_0}$. Let $H$ be the matrix representing this form in the basis $\mathcal{S}$: $\langle\mathbf{e},\mathbf{d}\rangle=\mathbf{e}^t\,\textrm{H}\,\mathbf{d}$. The matrix $H=(h_{ij})_{i,j\in Q_0}$ is given by
$$
h_{ij}=
\left\{
\begin{array}{cc}
1&\textrm{ if }i=j;\\
-1&\textrm{ if there is an arrow }i\rightarrow j;\\
0&\textrm{otherwise.}
\end{array}
\right.
$$
Since $Q$ is acyclic, the Euler form satisfies the following identities:
\begin{equation}\label{Eq:ProjSimpleEuler}
\langle [P_i], [S_j]\rangle=(\mathbf{dim}\,P_i)^t\,H\,\mathbf{dim}\,S_j=[P_i,S_j]=\delta_{ij}
\end{equation}
From \eqref{Eq:ProjSimpleEuler} we see that $H$ is invertible (over $\ZZ$) and its inverse has the i--th \emph{row} equal to $\mathbf{dim}\,P_i$. It is customary to define $C=(c_{ij})_{i,j\in Q_0}$ to be the matrix whose j--th \emph{column} is $\mathbf{dim}\,P_j$ which means
\begin{equation}\label{Eq:C}
c_{ij}=\#\{\textrm{paths from j to i in Q}\}
\end{equation}
In particular, the $k$--th row of $C$ is $\mathbf{dim}\, I_k$ so that the transpose matrix, denoted with $C^t$, has $\mathbf{dim}\,I_k$ for its $k$--th column. From the above discussion we have:
\begin{equation}\label{Eq:HC}
C^tH=\mathbf{1}.
\end{equation}
The matrices $C$ and $H=(C^t)^{-1}$ are respectively called the \emph{Cartan matrix} and the \emph{Euler matrix} of $Q$ (see e.g. \cite{ASS} or \cite{Ralf}). Notice that $C$ does not coincide with the Cartan matrix of the Lie algebra associated with $Q$.  

Since $C$ is invertible (over $\ZZ$), both the set $\mathcal{P}=\{[P_i]\in K_0(Q)|\,i\in Q_0\}$ of isoclasses of projective indecomposables and  the set $\mathcal{I}=\{[I_j]|\, j\in Q_0\}$ of isoclasses of injectives indecomposables form a basis of $K_0(Q)$ (equivalently, this can be deduced by the fact that every $M\in A$--mod admits an essentially  unique minimal projective and minimal injective resolution). We put $\omega_j:=[I_j]$ so that  the group $K_0(Q)$ is identified with the lattice $\bigoplus_{j\in Q_0}\ZZ \omega_j$ with respect to the basis $\mathcal{I}$. This choice is motivated by the standard convention in Lie theory that fundamental weights are $\{\omega_i\}$ while simple roots are $\{\alpha_i\}$ (see Remark~\ref{Rem:Weights}).

\begin{definition}\label{Def:GVect}
Let $X\in K_0(A)$. The \emph{$\mathbf{g}$--vector} or \emph{index} of X, denoted with $\mathbf{g}_X$, is the coordinate vector of $-X$  in the basis $\mathcal{I}$. 
\end{definition}
The name $\mathbf{g}$--vector comes from the Fomin--Zelevinsky theory of cluster algebras (see \cite{FZIV}).
\begin{remark}\label{Rem:Weights}
The reason why we put a minus sign in the definition of $\mathbf{g}_M$ is that we want $\mathbf{g}_{I_j}=-\omega_j$. As explained in \cite{YZ}, $\mathbf{g}$--vectors should be thought as weights, while dimension vectors as roots. 
\end{remark}
One can be more explicit, and  in view of \eqref{Eq:MinInjRes}, $[M]=[I_0^M]-[I_1^M]$   for any $M\in A$--mod, we get the explicit formula for the i--th coordinate of $\mathbf{g}_M$:
$$
(\mathbf{g}_M)_i=-[S_i,M]+[S_i,M]^1=- \langle S_i,M\rangle.
$$

\begin{lem}\label{Cor:GAlmostSplit}
Let $\xymatrix@C=10pt{
0\ar[r]&A\ar^\iota[r]&B\ar^\pi[r]&C\ar[r]&0
}$ be a short exact sequence in Rep($Q$). Then $\mathbf{g}_B=\mathbf{g}_A+\mathbf{g}_C$. In particular, for any $M$ and N
\begin{equation}\label{Eq:GSum}
\mathbf{g}_{M\oplus N}=\mathbf{g}_M+\mathbf{g}_N.
\end{equation}
\end{lem}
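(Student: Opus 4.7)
The statement is essentially a direct unpacking of Definition~\ref{Def:GVect} together with the defining relations of the Grothendieck group $K_0(Q)$, so the plan is very short. The key observation is that, by definition, $K_0(Q)$ is the free abelian group on isoclasses modulo the subgroup generated by $[B]-[A]-[C]$ for every short exact sequence $0\to A\to B\to C\to 0$. Hence already at the level of $K_0(Q)$ one has $[B]=[A]+[C]$, and the same identity holds for $-[B]=-[A]-[C]$.

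The first step is to note that $\mathcal{I}=\{[I_j]\}_{j\in Q_0}$ is a $\ZZ$-basis of $K_0(Q)$, as recalled in the excerpt (since the Cartan matrix $C$ is invertible over $\ZZ$). Consequently the map $X\mapsto \mathbf{g}_X$, which sends an element of $K_0(Q)$ to minus its coordinate vector in the basis $\mathcal{I}$, is a well-defined group homomorphism $K_0(Q)\to \ZZ^{Q_0}$. Applying this homomorphism to the relation $[B]=[A]+[C]$ yields $\mathbf{g}_B=\mathbf{g}_A+\mathbf{g}_C$ at once.

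For the statement on direct sums, one simply applies the identity just proved to the split short exact sequence
$$
\xymatrix{0\ar[r]&M\ar[r]&M\oplus N\ar[r]&N\ar[r]&0}
$$
to obtain $\mathbf{g}_{M\oplus N}=\mathbf{g}_M+\mathbf{g}_N$.

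There is no genuine obstacle; if one wanted an alternative, explicit verification one can instead use the formula $(\mathbf{g}_X)_i=-\langle S_i,X\rangle$ recalled just before the lemma, and combine it with the additivity of the Euler form on short exact sequences (which in turn follows from \eqref{Eq:EulForm} together with the long exact sequence of $\mathrm{Hom}$ and $\mathrm{Ext}^1$, using that $\mathrm{Rep}_K(Q)$ is hereditary so the long exact sequence terminates after $\mathrm{Ext}^1$). Either route gives the conclusion immediately.
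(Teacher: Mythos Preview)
Your proof is correct. The paper's own argument is precisely what you offer as the ``alternative, explicit verification'': it uses the formula $(\mathbf{g}_X)_i=-\langle S_i,X\rangle$ and the additivity of $\langle S_i,-\rangle$ on short exact sequences coming from applying $\Hom_Q(S_i,-)$. Your primary route, reading the identity off directly from the defining relation $[B]=[A]+[C]$ in $K_0(Q)$ together with the fact that taking coordinates in the basis $\mathcal{I}$ is a group homomorphism, is a perfectly valid and arguably cleaner shortcut, since it avoids invoking the long exact sequence altogether; the paper's approach has the minor advantage of being explicit at the level of each coordinate.
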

\begin{proof}
The i--th component of $\mathbf{g}_B$ equals $-\langle S_i, B\rangle$ which equals $-\langle S_i, A\rangle-\langle S_i, C\rangle$ (to see this, apply the covariant functor $\textrm{Hom }(S_i,-)$ to the given exact sequence). 
\end{proof}

By definition, $\mathbf{dim}\, M$ and $-\mathbf{g}_M$ are the coordinate vectors of $[M]$ respectively in the basis $\mathcal{S}$ and in the basis $\mathcal{I}$ of $K_0(Q)$. Since the columns of $C^t$ are the coordinate vectors of the elements of $\mathcal{I}$ in the basis $\mathcal{S}$, and $H$ is its inverse, we get
\begin{equation}\label{Eq:GDim}
\mathbf{g}_M=-\textrm{H}\,\mathbf{dim}\, M.
\end{equation}

Dually to the notion of an index, or $\mathbf{g}$--vector, is the notion of the \emph{coindex}. 
\begin{definition}
The \emph{coindex} of [M], denoted with $\mathbf{g}^M$, is the coordinate vector of 
$-[M]\in K_0(A)$ in the basis $\mathcal{P}$. 
\end{definition}
The following result establishes a relation between index, coindex  and $\tau$.
\begin{lem}
For every indecomposable non--projective representation $M$ we have 
\begin{equation}\label{Eq:GTau}
\mathbf{g}_{\tau M}=-\mathbf{g}^M=H^t\,\mathbf{dim}\, M.
\end{equation}
\end{lem}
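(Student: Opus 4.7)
The plan is to verify both equalities by direct computation, using the dimension vector formula \eqref{Eq:GDim} for the index and the explicit minimal injective resolution of $\tau M$ recorded in \eqref{Eq:MinInjResTau}.

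First I will dispose of the second equality $-\mathbf{g}^M = H^t\,\mathbf{dim}\,M$, which is pure linear algebra. By definition $\mathbf{g}^M$ is the coordinate vector of $-[M]\in K_0(Q)$ in the projective basis $\mathcal{P}$; the change-of-basis matrix from $\mathcal{P}$ to the simple basis $\mathcal{S}$ is precisely $C$, whose $j$-th column is $\mathbf{dim}\,P_j$. Hence $\mathbf{g}^M=-C^{-1}\mathbf{dim}\,M$, and since $C^t=H^{-1}$ by \eqref{Eq:HC} we get $C^{-1}=H^t$, so $\mathbf{g}^M=-H^t\,\mathbf{dim}\,M$, as desired.

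For the first equality $\mathbf{g}_{\tau M}=H^t\,\mathbf{dim}\,M$, I would combine \eqref{Eq:GDim} (which gives $\mathbf{g}_{\tau M}=-H\,\mathbf{dim}\,\tau M$) with an explicit computation of $\mathbf{dim}\,\tau M$. The Nakayama-based minimal injective resolution \eqref{Eq:MinInjResTau},
\[
0\longrightarrow \tau M\longrightarrow \nu(P_1^M)\longrightarrow \nu(P_0^M)\longrightarrow 0,
\]
together with \eqref{Eq:PropNu} and the decomposition \eqref{Eq:ProjReso} of $P_0^M$ and $P_1^M$, yields
\[
\mathbf{dim}\,\tau M=\sum_{j\in Q_0}\bigl([M,S_j]^1-[M,S_j]\bigr)\,\mathbf{dim}\,I_j=-\sum_{j\in Q_0}\langle M,S_j\rangle\,\mathbf{dim}\,I_j,
\]
using \eqref{Eq:EulForm} on the second equality. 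Writing $\langle M,S_j\rangle=(\mathbf{dim}\,M)^t H\,e_j=(H^t\mathbf{dim}\,M)_j$ and recalling that the $j$-th column of $C^t=H^{-1}$ is $\mathbf{dim}\,I_j$, this becomes $\mathbf{dim}\,\tau M=-H^{-1}H^t\,\mathbf{dim}\,M$. Multiplying both sides by $-H$ gives $\mathbf{g}_{\tau M}=-H\,\mathbf{dim}\,\tau M=H^t\,\mathbf{dim}\,M$, which combined with the previous paragraph yields the full chain $\mathbf{g}_{\tau M}=-\mathbf{g}^M=H^t\,\mathbf{dim}\,M$.

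There is no real obstacle in this proof—it is entirely a matter of bookkeeping with the change-of-basis matrices $C$ and $H=(C^t)^{-1}$ and the use of a minimal injective resolution of $\tau M$. The only thing to watch carefully is the distinction between coordinate vectors in $\mathcal{S}$, $\mathcal{P}$, and $\mathcal{I}$, and the resulting transposes; the hypothesis that $M$ be indecomposable non-projective enters only to ensure that the resolution \eqref{Eq:MinInjResTau} is indeed a minimal injective resolution of $\tau M$ (equivalently, that the construction of $\tau M=D\textrm{Ext}^1_Q(M,A)$ is non-degenerate).
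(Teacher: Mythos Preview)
Your proof is correct and uses the same ingredients as the paper: the minimal injective resolution \eqref{Eq:MinInjResTau} of $\tau M$, the Nakayama property \eqref{Eq:PropNu}, and the multiplicities \eqref{Eq:ProjReso}. The only difference is organizational: for the equality $\mathbf{g}_{\tau M}=-\mathbf{g}^M$ the paper reads it off directly from the fact that $\nu$ carries the projective resolution of $M$ to the injective resolution of $\tau M$ with the same multiplicities (so the $\mathcal{P}$-coordinates of $[M]$ equal the $\mathcal{I}$-coordinates of $[\tau M]$), whereas you take a small detour through $\mathbf{dim}\,\tau M$ and then apply \eqref{Eq:GDim}; either way the content is identical.
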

\begin{proof}
In view of \eqref{Eq:ProjReso} we get
$
(\mathbf{g}^M)_i=-\langle  M, S_i\rangle=-d_i+\sum_{j\rightarrow i}d_j=-\sum h_{ji}d_j 
$
and hence
$\mathbf{g}^M=-H^t\,\mathbf{dim}\,M$.  The first equality follows from \eqref{Eq:MinInjResTau}, using \eqref{Eq:PropNu}. 
\end{proof}

\begin{corollary}
For any indecomposable non--projective module M, we have 
\begin{align}\label{Eq:Cox}
\mathbf{dim }\tau M&=&-(H^{-1})(H^t)\mathbf{dim }M\\
\label{Eq:CoxG}
\mathbf{g}_{\tau M}&=&-(C^{-1})(C^t)\,\mathbf{g}_M.
\end{align}
\end{corollary}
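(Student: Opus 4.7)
The plan is to derive both identities by purely mechanical matrix manipulation of three already-established facts: the relation $\mathbf{g}_M = -H\,\mathbf{dim}\,M$ from \eqref{Eq:GDim}, the relation $\mathbf{g}_{\tau M} = H^t\,\mathbf{dim}\,M$ from \eqref{Eq:GTau}, and the identity $C^tH = \mathbf{1}$ from \eqref{Eq:HC}, which gives $H^{-1} = C^t$ and, taking transposes, $H^t = C^{-1}$.

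For the first identity \eqref{Eq:Cox}, I would apply \eqref{Eq:GDim} to the module $\tau M$ itself to obtain $\mathbf{g}_{\tau M} = -H\,\mathbf{dim}\,\tau M$, then invert to get $\mathbf{dim}\,\tau M = -H^{-1}\mathbf{g}_{\tau M}$. Substituting the value of $\mathbf{g}_{\tau M}$ furnished by \eqref{Eq:GTau} yields $\mathbf{dim}\,\tau M = -H^{-1}H^t\,\mathbf{dim}\,M$, which is exactly the claim. Note that the hypothesis that $M$ is indecomposable and non-projective is needed only to apply \eqref{Eq:GTau}, which was derived from the minimal injective resolution of $\tau M$ in \eqref{Eq:MinInjResTau}.

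For the second identity \eqref{Eq:CoxG}, I would again start from \eqref{Eq:GTau}, writing $\mathbf{g}_{\tau M} = H^t\,\mathbf{dim}\,M$, but this time eliminate $\mathbf{dim}\,M$ using \eqref{Eq:GDim} in the form $\mathbf{dim}\,M = -H^{-1}\mathbf{g}_M$. This gives $\mathbf{g}_{\tau M} = -H^tH^{-1}\mathbf{g}_M$. Finally, I would rewrite the matrix $H^tH^{-1}$ in terms of the Cartan matrix $C$ using \eqref{Eq:HC}: since $H = (C^t)^{-1}$ we have $H^{-1} = C^t$ and $H^t = \bigl((C^t)^{-1}\bigr)^t = (C^{-1})^t{}^{t} = C^{-1}$, so $H^tH^{-1} = C^{-1}C^t$.

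There is no serious obstacle here; the only point requiring a bit of care is the transpose bookkeeping in identifying $H^t$ with $C^{-1}$, which follows from the general identity $(A^t)^{-1}=(A^{-1})^t$ applied to the compact form $H=(C^t)^{-1}$ of equation \eqref{Eq:HC}. Once that is clear the two formulas follow by direct substitution, and the corollary is essentially a repackaging of previously established facts in a form suitable for later use (the matrix $-C^{-1}C^t$ being the combinatorial Coxeter transformation acting on $g$-vectors).
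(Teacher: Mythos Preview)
Your argument is correct and is exactly the derivation the paper has in mind: the corollary is stated without proof precisely because it follows by the mechanical substitution of \eqref{Eq:GDim}, \eqref{Eq:GTau} and \eqref{Eq:HC} that you carry out. The only cosmetic point is that the intermediate step $((C^t)^{-1})^t=(C^{-1})^{tt}$ is more transparently written as $((C^t)^{-1})^t=((C^t)^t)^{-1}=C^{-1}$, but the conclusion is the same.
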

The matrix $\Phi:=-(H^{-1})(H^t)$ is called the \emph{Coxeter matrix} of $Q$. The reason for this name is the following: if $M$ is non projective, then both $\mathbf{dim }M$ and $\mathbf{dim }\tau M$ are roots for the underlying  Dynkin diagram of $Q$. Let $s_1,\cdots s_n$ be the simple reflections generating the corresponding Weyl group W. The possible orientations of the underlying Dynkin diagrams are in bijection with the Coxeter elements of W, which are the elements $c=s_{i_1}s_{i_2}\cdots s_{i_n}$ for all possible $i_k\neq i_{\ell}\in[1,n]$: given an orientation $Q$
$$
c_Q^{-1}:=c_{Q'}\circ \left(\prod_{i\in Q_0:\\ i\textrm{ is a sink}}s_i\right)
$$
where $Q'$ is obtained from $Q$ by removing all the sinks. For example 
$$
\xymatrix@R=3pt{&&&&4&&\\Q:&1\ar[r]&2\ar[r]&3\ar[ru]\ar[rd]&&&c_Q^{-1}=s_1\circ s_2\circ s_3\circ s_4\circ s_5\\&&&&5&&
}
$$
where the Weyl group acts on the roots as functions. Then it can be proved that 
\begin{equation}\label{Eq:TauCox}
\textbf{dim }\tau M=c_Q^{-1}(\textbf{dim }M)
\end{equation}
and also
\begin{equation}\label{Eq:TauCoxG}
\textbf{g}_{\tau M}=c_Q^{-1}(\textbf{g}_M)
\end{equation}

\begin{remark}
Equation~\ref{Eq:TauCox} provides a convenient way to compute $\textbf{dim }\tau M$: indeed for any vertex i, 
$$
\textbf{dim }(s_i(\mathbf{d}))_j=\left\{\begin{array}{cc}d_j&\textrm{if }j\neq i\\ -d_i+\sum_{(k-i) \in Q_1}d_k&\textrm{if }j=i\end{array}\right.
$$
\end{remark}

We are now ready to compare  $\mathbf{g}_{\tau M}$ and $\mathbf{g}_M$, for any indecomposable non--projective $A$--module $M$. We need the following definition 
\begin{definition}\label{Def:ExchMatrix} 
The \emph{exchange matrix} of  $Q$ is the  matrix 
$B:=H-H^t$.
\end{definition}
From the definition, it follows that the $ij$--entry $b_{ij}$ of the matrix $B$ is given by
$b_{ij}=\#\{j\rightarrow i\in Q_1\}-\#\{i \rightarrow j\in Q_1\}$ (for $i,j\in Q_0$)\footnote{In the original paper of Caldero and Chapoton \cite{CC}, the authors made dual choices:  they defined the  $\mathbf{g}$--vector of $M$ as what I define to be the coindex of $M$. In order to make theorem~\ref{Thm:BTauG} working, this definition lead them to work with the opposite exchange matrix. Nowadays, after the work of Derksen-Weyman-Zelevinsky \cite{DWZ1, DWZ2} it is customary to associate to $Q$ the exchange matrix as in definition~\ref{Def:ExchMatrix}, and this motivates my choice of the definition of $\mathbf{g}$.}. Once again, the name of $B$ comes from the Fomin--Zelevinsky theory of cluster algebras.

\begin{theorem}\label{Thm:BTauG}
For any indecomposable non--projective $A$--module M, we have
\begin{equation}\label{Eq:G-vector}
\mathbf{g}_M+\mathbf{g}_{\tau M}+\textrm{B }\mathbf{dim}\,M=0
\end{equation}
\end{theorem}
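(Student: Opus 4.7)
The proof is essentially a one-line algebraic manipulation, once the two key identities \eqref{Eq:GDim} and \eqref{Eq:GTau} are in hand. The plan is simply to substitute them into the desired equation and read off the definition of $B$.

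More precisely, I would start by recalling that equation \eqref{Eq:GDim} gives
$$\mathbf{g}_M = -H\,\mathbf{dim}\,M,$$
and equation \eqref{Eq:GTau}, obtained via the Nakayama functor applied to the minimal injective resolution \eqref{Eq:MinInjResTau} of $\tau M$, gives
$$\mathbf{g}_{\tau M} = H^t\,\mathbf{dim}\,M.$$
Adding these two identities and using Definition~\ref{Def:ExchMatrix}, which defines $B = H - H^t$, yields
$$\mathbf{g}_M + \mathbf{g}_{\tau M} = (-H + H^t)\,\mathbf{dim}\,M = -B\,\mathbf{dim}\,M,$$
which is exactly the claimed relation \eqref{Eq:G-vector}.

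There is no real obstacle here: the formula is a book-keeping identity encoding the fact that the indices of $M$ and $\tau M$ record opposite ``halves'' of the homological data of $M$, and the exchange matrix is designed precisely to measure the difference between the two sides of the Euler form. The work has already been done in setting up \eqref{Eq:GDim} and \eqref{Eq:GTau}; the one subtlety worth flagging explicitly in the write-up is that the hypothesis that $M$ be indecomposable and non-projective is needed only to guarantee that $\tau M$ is defined via the Auslander--Reiten translate and that the minimal injective resolution of $\tau M$ has the form \eqref{Eq:MinInjResTau}, so that \eqref{Eq:GTau} applies. Beyond this, the argument is purely a linear-algebraic identity in $K_0(Q) \simeq \mathbf{Z}^{Q_0}$.
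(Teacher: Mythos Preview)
Your proof is correct and is essentially identical to the paper's own proof: both invoke \eqref{Eq:GDim} and \eqref{Eq:GTau} and then read off the definition $B=H-H^t$. The only quibble is a phrasing slip---\eqref{Eq:MinInjResTau} already \emph{is} the minimal injective resolution of $\tau M$ (obtained by applying $\nu$ to the minimal projective resolution of $M$), not something to which $\nu$ is further applied---but this does not affect the argument.
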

\begin{proof}
In view of \eqref{Eq:GDim} and \eqref{Eq:GTau}, we have: 
$$
\textrm{B}\,\mathbf{dim}\,M=\textrm{H}\,\mathbf{dim}\,M-\textrm{H}^t\,\mathbf{dim}\,M=-\mathbf{g}_{M}-\mathbf{g}_{\tau M}.
$$
\end{proof}

\subsection{F--polynomial of a $Q$--representation}
Given a $Q$--representation $N$, its F--polynomial is the generating function of the Euler characteristic of the quiver Grassmannians associated with N: 
$$
F_N(y_1,\cdots, y_n):=\sum_{\mathbf{e}\in \ZZ_{\geq 0}^{Q_0}}\chi (Gr_\mathbf{e}(N))\mathbf{y}^\mathbf{e}
$$
where $\mathbf{y}^\mathbf{e}:=\prod_{i\in Q_0} y_i^{e_i}$. 
Let us discuss some properties of $F$--polynomials. First of all, since isomorphic representations have isomorphic quiver Grassmannian, the F--polynomial $F_M$ is constant along the isoclass of M.  The definition is formally extended to the elements $\{-[P_i]|\, i\in Q_0\}$ by declaring
\begin{equation}
F_{-[P_i]}(\mathbf{y}):=1,\qquad (\forall i\in Q_0).
\end{equation}
\begin{proposition}\label{Prop:DWZDirectSum}
For any $M,N\in \textrm{A--mod}$,  $F_{M\oplus N}=F_MF_N$. 
\end{proposition}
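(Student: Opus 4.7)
The plan is to use the one‐dimensional torus action already introduced in the proof of Lemma~\ref{Lemma:Poincare}. Let $T = \mathbf{C}^\ast$ act on $M\oplus N$ by $\lambda \cdot (m,n) := (m, \lambda n)$. This is an automorphism of the $Q$--representation $M\oplus N$, so it induces an algebraic action of $T$ on the projective variety $X := Gr_{\mathbf{e}}(M\oplus N)$.

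First I would identify the fixed point set. A subrepresentation $L \subseteq M\oplus N$ is $T$--stable if and only if it is stable under the projectors onto $M$ and $N$, which (since these are $Q$--linear and satisfy $p_M + p_N = \mathrm{id}$) forces $L = (L\cap M) \oplus (L\cap N)$. Hence
\[
X^T \;=\; \coprod_{\mathbf{f}+\mathbf{g}=\mathbf{e}} Gr_{\mathbf{f}}(M) \times Gr_{\mathbf{g}}(N),
\]
as a disjoint union of closed subvarieties.

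Next I would invoke the well--known principle that for an algebraic action of $T = \mathbf{C}^\ast$ on a (possibly singular) complex projective variety one has $\chi(X) = \chi(X^T)$; this follows for instance from the Bia{\l}ynicki--Birula decomposition (when $X$ is smooth) and more generally from the fact that the only compact orbits of $T$ are the fixed points, so that the non--fixed orbits contribute $\chi(\mathbf{C}^\ast)=0$ to the Euler characteristic. Combined with the K\"unneth formula in Euler characteristic, this yields
\[
\chi\bigl(Gr_{\mathbf{e}}(M\oplus N)\bigr) \;=\; \sum_{\mathbf{f}+\mathbf{g}=\mathbf{e}} \chi\bigl(Gr_{\mathbf{f}}(M)\bigr)\,\chi\bigl(Gr_{\mathbf{g}}(N)\bigr).
\]

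Finally I would multiply by $\mathbf{y}^{\mathbf{e}} = \mathbf{y}^{\mathbf{f}}\mathbf{y}^{\mathbf{g}}$ and sum over $\mathbf{e} \in \ZZ_{\geq 0}^{Q_0}$ to obtain $F_{M\oplus N}(\mathbf{y}) = F_M(\mathbf{y})\,F_N(\mathbf{y})$. The only conceptual point is the identification of $X^T$; the rest is routine. Note that for the purposes of this paper one may even shortcut the general statement $\chi(X)=\chi(X^T)$: if $M$ and $N$ are rigid the $X$ we care about is smooth by Theorem~\ref{Thm:RigQuivGras}, and the Bia{\l}ynicki--Birula decomposition then gives the equality of Euler characteristics directly, which is the only case needed in the applications below.
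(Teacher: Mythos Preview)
Your proof is correct and is essentially identical to the paper's: the same torus action $\lambda\cdot(m,n)=(m,\lambda n)$, the same identification of the fixed locus as $\coprod_{\mathbf{f}+\mathbf{g}=\mathbf{e}} Gr_\mathbf{f}(M)\times Gr_\mathbf{g}(N)$, and the same appeal to $\chi(X)=\chi(X^T)$. You supply a little more detail on why a $T$--stable subrepresentation splits and on why Euler characteristic localizes to the fixed set, but the argument is the one the paper gives (attributed there to Derksen--Weyman--Zelevinsky); one small slip in your closing remark is that smoothness of $X$ via Theorem~\ref{Thm:RigQuivGras} requires $M\oplus N$ to be rigid, not merely $M$ and $N$ individually.
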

\begin{proof}
This proof is due to Derksen--Weyman--Zelevinsky \cite{DWZ2}. The 1--dimensional torus $T=\mathbf{C}^\ast$ acts on $M\oplus N$ by $\lambda\cdot (m,n):=(m,\lambda n)$. This defines an automorphism of $M\oplus N$ and hence it descends to an action of $T$ on the quiver Grassmannian $Gr_\mathbf{e}(M\oplus N)$. The T--fixed points are direct sums of subrepresentations of $M$ and of N: $Gr_\mathbf{e}(M\oplus N)^T=\prod_{\mathbf{f}+\mathbf{h}=\mathbf{e}}Gr_\mathbf{f}(M)\times Gr_\mathbf{h}(N)$. In particular, $\chi(Gr_\mathbf{e}(M\oplus N))=\chi(Gr_\mathbf{e}(M\oplus N)^T)=\sum_{\mathbf{f}+\mathbf{h}=\mathbf{e}}\chi(Gr_\mathbf{f}(M)) \chi(Gr_\mathbf{h}(N))$ which proves the proposition.
\end{proof}
\begin{remark}
Proposition~\ref{Prop:DWZDirectSum} should be compared with Lemma~\ref{Lemma:Poincare}. While in Proposition~\ref{Prop:DWZDirectSum} there are no assumptions on  $M$, $N$ and $M\oplus N$, this is not the case for Lemma~\ref{Lemma:Poincare}: the reason is that the Euler characteristic of a projective varieties on which a torus acts equal the Euler characteristic of the space of T--fixed points, with no assumptions on X. But in Lemma~\ref{Lemma:Poincare} it was needed that the quiver Grassmannians associated with $M$, $N$ and $M\oplus N$ were all smooth. 
\end{remark}
\begin{theorem}
For any almost split sequence $\xymatrix@C=8pt{
0\ar[r]&\tau M\ar^\iota[r]&E\ar^\pi[r]&M\ar[r]&0
}$ the following formula holds:
\begin{equation}\label{Eq:CCFormulaF}
F_{M}(\mathbf{y})F_{\tau M}(\mathbf{y})=F_E(\mathbf{y})+\mathbf{y}^{\text{dim }M}.
\end{equation}
\end{theorem}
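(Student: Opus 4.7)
The plan is to compute both sides of \eqref{Eq:CCFormulaF} coefficient by coefficient in $\mathbf{y}$, using the three results already established for almost split sequences: Proposition~\ref{Prop:DWZDirectSum} (multiplicativity of $F$ under direct sums), Theorem~\ref{Thm:ARQG} (the Euler characteristics of $Gr_\mathbf{e}(E)$ and $Gr_\mathbf{e}(M\oplus\tau M)$ agree for $\mathbf{e}\neq\mathbf{dim}\,M$), and Corollary~\ref{Cor:QGAlmostSplit} (which controls the exceptional dimension vector).

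First I would rewrite the left--hand side using Proposition~\ref{Prop:DWZDirectSum}, obtaining
\[
F_M(\mathbf{y})\,F_{\tau M}(\mathbf{y}) \;=\; F_{M\oplus\tau M}(\mathbf{y}) \;=\; \sum_{\mathbf{e}\in\ZZ^{Q_0}_{\geq 0}} \chi\bigl(Gr_\mathbf{e}(M\oplus\tau M)\bigr)\,\mathbf{y}^\mathbf{e}.
\]
Then I would form the difference $F_M F_{\tau M} - F_E$ as a single generating function whose coefficient at $\mathbf{y}^\mathbf{e}$ is $\chi(Gr_\mathbf{e}(M\oplus\tau M))-\chi(Gr_\mathbf{e}(E))$, and split the sum according to whether $\mathbf{e}=\mathbf{dim}\,M$ or not.

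For all $\mathbf{e}\neq\mathbf{dim}\,M$ the difference of Euler characteristics vanishes by Theorem~\ref{Thm:ARQG}, since the two quiver Grassmannians are diffeomorphic (and both empty when neither exists). For the remaining dimension vector $\mathbf{e}=\mathbf{dim}\,M$, Corollary~\ref{Cor:QGAlmostSplit} says that $Gr_{\mathbf{dim}\,M}(E)$ is empty, so contributes $0$, while $Gr_{\mathbf{dim}\,M}(M\oplus\tau M)$ is a reduced point and contributes $\chi=1$. Adding the two cases yields
\[
F_M(\mathbf{y})\,F_{\tau M}(\mathbf{y}) - F_E(\mathbf{y}) \;=\; \mathbf{y}^{\mathbf{dim}\,M},
\]
which is \eqref{Eq:CCFormulaF}.

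There is essentially no obstacle left at this point: all the geometric content has been isolated in the earlier results. The only subtle bookkeeping is to make sure that dimension vectors $\mathbf{e}$ for which $Gr_\mathbf{e}(M\oplus\tau M)$ is non--empty while $Gr_\mathbf{e}(E)$ is empty (or vice versa) are correctly handled; but Theorem~\ref{Thm:ARQG} asserts the biconditional non--emptiness statement precisely for $\mathbf{e}\neq\mathbf{dim}\,M$, and Corollary~\ref{Cor:QGAlmostSplit} handles the single exceptional vector, so the two cases together exhaust $\ZZ^{Q_0}_{\geq 0}$ and produce exactly the monomial $\mathbf{y}^{\mathbf{dim}\,M}$ as the defect.
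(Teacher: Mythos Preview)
Your proposal is correct and follows essentially the same route as the paper: rewrite $F_M F_{\tau M}$ as $F_{M\oplus\tau M}$ via Proposition~\ref{Prop:DWZDirectSum}, then match coefficients with $F_E$ using Theorem~\ref{Thm:ARQG} for $\mathbf{e}\neq\mathbf{dim}\,M$ and Corollary~\ref{Cor:QGAlmostSplit} for $\mathbf{e}=\mathbf{dim}\,M$. The paper's proof is slightly terser (it cites only Theorem~\ref{Thm:ARQG} in the computation, leaving the appeal to Corollary~\ref{Cor:QGAlmostSplit} implicit), but your more explicit handling of the exceptional dimension vector and of the biconditional non--emptiness is entirely in line with the argument.
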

\begin{proof}
In view of Proposition~\ref{Prop:DWZDirectSum}, $F_{M\oplus\tau M}=F_MF_{\tau M}$. 
The rest follows by Theorem~\ref{Thm:ARQG}: 
\begin{align*}
F_{M\oplus \tau M}(\mathbf{y})&=&\sum_{\mathbf{e}}\chi(Gr_\mathbf{e}(M\oplus \tau M))\mathbf{y}^\mathbf{e}\\
&=&\sum_{\mathbf{e}\neq\mathbf{dim}\,M}\chi(Gr_\mathbf{e}(E))\mathbf{y}^\mathbf{e}+\mathbf{y}^{\mathbf{dim}\,M}\\
&=&F_E(\mathbf{y})+\mathbf{y}^{\mathbf{dim}\,M}.
\end{align*}
\end{proof}

\subsection{CC-formula}\label{Sec:CCFormula}
We begin the section by recalling the CC--formula, formulated in terms of $\mathbf{g}$--vectors and $F$--polynomials. 
\begin{definition}
Given a $Q$--representation $M$,  the Laurent polynomial $CC(M)\in \ZZ[x_1^{\pm1},\cdots, x_n^{\pm1}]$ (where $n=|Q_0|$) is defined as follows
$$
CC(M)=F_M(\mathbf{x}^{B^1},\cdots, \mathbf{x}^{B^n})\,\mathbf{x}^{\mathbf{g}_M}
$$
where $B^1,\cdots, B^n$ are the n columns of the matrix B (see definition~\ref{Def:ExchMatrix}).  More explicitly
\begin{equation}\label{Eq:CC}
CC(M)=\sum_{\mathbf{e}\in\ZZ_{\geq0}^n}\chi(Gr_\mathbf{e}(M))\,\mathbf{x}^{B\mathbf{e}+\mathbf{g}_M}.
\end{equation}
\end{definition}
\begin{remark}
In view of Corollary~\ref{Cor:Criterion NonEmpty} formula \eqref{Eq:CC} can be refined as follows: if $M\simeq\tilde{M}_\mathbf{d}$ is rigid (in particular if $M$ is indecomposable), then 
$$
CC(M)=\sum_{\mathbf{e}:\, [\tilde{M}_\mathbf{e},\tilde{M}_\mathbf{d-e}]^1=0}\chi(Gr_\mathbf{e}(M))\,\mathbf{x}^{B\mathbf{e}+\mathbf{g}_M}.
$$
\end{remark}
\begin{proposition}\label{Prop:CCPlus}
Given two $Q$--representations $M$ and $N$, we have $$CC(M\oplus N)=CC(M)CC(N).$$
\end{proposition}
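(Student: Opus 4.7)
The plan is to derive this multiplicativity directly from the two defining ingredients of $CC$: the $F$--polynomial and the $\mathbf{g}$--vector. Both have already been shown to behave well with respect to direct sums, so the proof should amount to a formal manipulation.

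First I would invoke Proposition~\ref{Prop:DWZDirectSum}, which gives
$$F_{M\oplus N}(\mathbf{y}) = F_M(\mathbf{y}) F_N(\mathbf{y}).$$
In particular, substituting $y_i = \mathbf{x}^{B^i}$ for $i=1,\ldots, n$ yields
$$F_{M\oplus N}(\mathbf{x}^{B^1},\ldots,\mathbf{x}^{B^n}) = F_M(\mathbf{x}^{B^1},\ldots,\mathbf{x}^{B^n}) \cdot F_N(\mathbf{x}^{B^1},\ldots,\mathbf{x}^{B^n}).$$

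Next I would apply Lemma~\ref{Cor:GAlmostSplit}, more precisely equation \eqref{Eq:GSum}, which asserts $\mathbf{g}_{M\oplus N} = \mathbf{g}_M + \mathbf{g}_N$. Since $\mathbf{x}^{\mathbf{a}+\mathbf{b}} = \mathbf{x}^{\mathbf{a}}\mathbf{x}^{\mathbf{b}}$ for any integer vectors $\mathbf{a},\mathbf{b}$, we get
$$\mathbf{x}^{\mathbf{g}_{M\oplus N}} = \mathbf{x}^{\mathbf{g}_M}\mathbf{x}^{\mathbf{g}_N}.$$

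Combining these two identities with the definition
$$CC(X) = F_X(\mathbf{x}^{B^1},\ldots,\mathbf{x}^{B^n})\,\mathbf{x}^{\mathbf{g}_X}$$
gives $CC(M\oplus N) = CC(M)CC(N)$ immediately. There is no real obstacle here: the work was done in Proposition~\ref{Prop:DWZDirectSum} (a torus-fixed-point computation of Euler characteristics) and in Lemma~\ref{Cor:GAlmostSplit} (additivity of $\mathbf{g}$-vectors on short exact sequences, applied to the split sequence $0\to M\to M\oplus N\to N\to 0$). The statement of the proposition is simply the compatibility of these two ingredients with the monomial $\mathbf{x}^{B\mathbf{e}+\mathbf{g}_\bullet}$ assembled in the CC--formula.
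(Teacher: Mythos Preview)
Your proof is correct and follows exactly the same approach as the paper: it combines $F_{M\oplus N}=F_MF_N$ from Proposition~\ref{Prop:DWZDirectSum} with $\mathbf{g}_{M\oplus N}=\mathbf{g}_M+\mathbf{g}_N$ from Lemma~\ref{Cor:GAlmostSplit}, and then reads off the result from the definition of $CC$. The paper's proof is the one-line version of what you wrote.
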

\begin{proof}
Since $F_{M\oplus N}=F_MF_N$ (by Proposition~\ref{Prop:DWZDirectSum}) and $\mathbf{g}_{M\oplus N}=\mathbf{g}_{M}+\mathbf{g}_{N}$ (by Lemma~\ref{Cor:GAlmostSplit}), the result follows. 
\end{proof}
\begin{theorem}
\begin{enumerate}
\item For every $k\in Q_0$
\begin{equation}\label{CCFormulaInj}
CC(I_k)x_k=(\prod_{k\rightarrow i}x_i) (\prod_{j\rightarrow k} CC(I_j))+1.
\end{equation}
\item For any almost split sequence $\xymatrix@C=8pt{
0\ar[r]&\tau M\ar^\iota[r]&E\ar^\pi[r]&M\ar[r]&0
}$ starting in a non--injective module $\tau M$:
\begin{equation}\label{CCFormula}
CC(\tau M)CC(M)=CC(E)+1.
\end{equation}
\end{enumerate}
In particular, the set $\{CC(M)|\, M\in ind(Q)\}\cup\{x_i|\,i\in Q_0\}$ is the set of all cluster variables of the (coefficient--free) cluster algebra associated with $Q$ and \eqref{CCFormulaInj}--\eqref{CCFormula} are all the primitive exchange relations. 
\end{theorem}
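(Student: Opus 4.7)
My plan is to verify the two exchange relations as identities in $\ZZ[x_1^{\pm1},\ldots,x_n^{\pm1}]$ by expanding $CC$ into its two ingredients (the $F$-polynomial and the $\mathbf{g}$-vector substitution) and combining the available facts about each.

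For \eqref{CCFormula} I would start from
$$
CC(\tau M)\,CC(M)=F_{\tau M}(\mathbf{x}^{B^1},\ldots,\mathbf{x}^{B^n})\,F_M(\mathbf{x}^{B^1},\ldots,\mathbf{x}^{B^n})\,\mathbf{x}^{\mathbf{g}_{\tau M}+\mathbf{g}_M},
$$
apply Proposition~\ref{Prop:DWZDirectSum} to collapse the product of $F$-polynomials into $F_{\tau M\oplus M}$, and then invoke the Auslander--Reiten identity \eqref{Eq:CCFormulaF}, $F_{\tau M}F_M=F_E+\mathbf{y}^{\mathbf{dim}\,M}$, available through Theorem~\ref{Thm:ARQG}. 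This splits the product into two summands. For the first, I combine $F_E(\mathbf{x}^{B^1},\ldots,\mathbf{x}^{B^n})$ with $\mathbf{x}^{\mathbf{g}_{\tau M}+\mathbf{g}_M}$, using Lemma~\ref{Cor:GAlmostSplit} applied to the almost split sequence to rewrite $\mathbf{g}_{\tau M}+\mathbf{g}_M$ as $\mathbf{g}_E$, thereby recovering $CC(E)$. For the second summand the exponent is $B\,\mathbf{dim}\,M+\mathbf{g}_{\tau M}+\mathbf{g}_M$, which vanishes by Theorem~\ref{Thm:BTauG}; hence this summand equals $1$.

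The more delicate formula is \eqref{CCFormulaInj}. Since $\mathbf{g}_{I_k}=-\alpha_k$, its left-hand side equals $F_{I_k}(\mathbf{x}^{B^1},\ldots,\mathbf{x}^{B^n})$; using $CC(I_j)=F_{I_j}(\mathbf{x}^{B^1},\ldots,\mathbf{x}^{B^n})\,x_j^{-1}$ and the fact that the $k$-th column of $B=H-H^t$ equals $\sum_{k\rightarrow i}\alpha_i-\sum_{j\rightarrow k}\alpha_j$, its right-hand side rearranges to $1+\mathbf{x}^{B^k}\prod_{j\rightarrow k}F_{I_j}(\mathbf{x}^{B^1},\ldots,\mathbf{x}^{B^n})$. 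It therefore suffices to establish the polynomial identity
$$
F_{I_k}(\mathbf{y})=1+\mathbf{y}^{\alpha_k}\prod_{j\rightarrow k}F_{I_j}(\mathbf{y}).
$$
For this I would exploit \eqref{Eq:InjPresS}: the injective $I_k$ has simple socle $S_k$ with $I_k/S_k\simeq\bigoplus_{j\rightarrow k}I_j$. Consequently every nonzero subrepresentation of $I_k$ contains $S_k$, so by the lattice correspondence the subvariety of $Gr_\mathbf{e}(I_k)$ with $\mathbf{e}\neq 0$ is canonically isomorphic to $Gr_{\mathbf{e}-\alpha_k}(I_k/S_k)$; taking Euler characteristics, summing over $\mathbf{e}$, and applying Proposition~\ref{Prop:DWZDirectSum} to the direct sum decomposition of $I_k/S_k$ then yields the displayed identity.

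The final ``in particular'' assertion requires matching \eqref{CCFormulaInj} and \eqref{CCFormula} against the full list of Fomin--Zelevinsky primitive exchange relations of the Dynkin cluster algebra: the former realises the $n$ mutations out of the initial seed, while the latter, indexed by arrows of the AR-quiver, realises all remaining primitive exchanges. The main obstacle I foresee is not any single algebraic step above but this last combinatorial identification, which amounts to showing that the two families of relations generate the entire exchange graph of the cluster algebra; for this I would follow Caldero--Keller's analysis in \cite{CK1}, together with the Gabriel-type bijection between isoclasses of indecomposable modules and positive roots that parametrises the non-initial cluster variables.
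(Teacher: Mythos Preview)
Your proof of parts (1) and (2) is correct and follows essentially the same route as the paper: for \eqref{CCFormula} both arguments combine Proposition~\ref{Prop:DWZDirectSum}, the identity \eqref{Eq:CCFormulaF}, Lemma~\ref{Cor:GAlmostSplit}, and Theorem~\ref{Thm:BTauG} in the same way; for \eqref{CCFormulaInj} both use that $S_k$ is the simple socle of $I_k$ with quotient $\bigoplus_{j\to k}I_j$, the resulting bijection of sub\-representations, and Proposition~\ref{Prop:DWZDirectSum}, with your version simply isolating the $F$-polynomial identity $F_{I_k}=1+\mathbf{y}^{\alpha_k}\prod_{j\to k}F_{I_j}$ before substituting, whereas the paper performs the substitution and the splitting simultaneously.

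The only divergence worth noting is in the final ``in particular'' step. You propose to invoke Caldero--Keller \cite{CK1} and the Gabriel bijection to argue that the two families of relations generate the exchange graph; the paper instead appeals to Yang--Zelevinsky \cite[Theorem~1.5]{YZ}, which gives the primitive exchange relations of a finite-type cluster algebra explicitly in terms of the Coxeter element, and then uses \eqref{Eq:TauCoxG} (the fact that $\tau$ acts on $\mathbf{g}$-vectors by $c_Q^{-1}$) to match those relations with \eqref{CCFormulaInj}--\eqref{CCFormula}. The paper's route is more direct for this identification and avoids having to reconstruct the exchange-graph argument; your route would work but is less self-contained at this point.
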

\begin{proof}
\begin{enumerate}
\item The simple module $S_k$ is the socle of $I_k$ and hence $S_k$ is a sub-representation of any non--zero sub-representation of $I_k$. We denote by $R$ the corresponding quotient. It is immediate to see that  
$
R\simeq\bigoplus_{j\rightarrow k}I_j.
$
We have
\begin{align*}
CC(I_k)&=&\mathbf{x}^{\mathbf{g}_{I_k}}\left(\sum_\mathbf{e}\chi(Gr_\mathbf{e}(I_k))\mathbf{x}^{B\mathbf{e}}\right)\\
&=&x_k^{-1}\left(\sum_{\mathbf{e}=\mathbf{e}'+\mathbf{dim}\,S_k}\chi(Gr_\mathbf{e'}(R))\mathbf{x}^{B\mathbf{e'}+B\mathbf{dim}\,S_k}+1\right)\\
&=&x_k^{-1}\left(\sum_{\mathbf{e'}}\chi(Gr_\mathbf{e'}(R))\mathbf{x}^{B\mathbf{e'}}\prod_{j\rightarrow k}x_j^{-1}\prod_{k\rightarrow i}x_i+1\right)\\
&=&x_k^{-1}\left(\sum_{\mathbf{e'}}\chi(Gr_\mathbf{e'}(R))\mathbf{x}^{B\mathbf{e'}+\mathbf{g}_R}\prod_{k\rightarrow i}x_i+1\right)\\
&=&x_k^{-1}\left(CC(R)\prod_{k\rightarrow i}x_i +1\right).
\end{align*}
and \eqref{CCFormulaInj} follows from Proposition~\ref{Prop:CCPlus} (in the  third equality the definition of the matrix B has been used). 
\item In view of Proposition~\ref{Prop:CCPlus}, $CC(M\oplus \tau M)=CC(M)CC(\tau M)$. We have:
\begin{align*}
CC(M\oplus \tau M)&=&F_{M\oplus\tau M}(\mathbf{x}^{B^1},\cdots, \mathbf{x}^{B^n})\mathbf{x}^{\mathbf{g}_{M\oplus\tau M}}\\
&=&(F_E(\mathbf{x}^{B^1},\cdots, \mathbf{x}^{B^n})+\mathbf{x}^{B\mathbf{dim}\,M})\mathbf{x}^{\mathbf{g}_{M\oplus\tau M}}\\
&=&F_E(\mathbf{x}^{B^1},\cdots, \mathbf{x}^{B^n})\mathbf{x}^{\mathbf{g}_E}+\mathbf{x}^{B\mathbf{dim}\,M+\mathbf{g}_M+\mathbf{g}_{\tau M}}\\
&=&CC(E)+1.
\end{align*}
The second equality follows from \eqref{Eq:CCFormulaF}; the third one follows from Lemma~\ref{Cor:GAlmostSplit}; the last equality follows from Theorem~\ref{Thm:BTauG}.
\end{enumerate}
In view of \cite[Theorem~1.5]{YZ} and \eqref{Eq:TauCoxG}, equations \eqref{CCFormulaInj} and \eqref{CCFormula} are precisely the primitive exchange relations of the (coefficient--free) cluster algebra associated with $Q$ and hence the last statement follows by induction. 
\end{proof}

\section*{Acknowledgments}
I thank Corrado De Concini for discussions on torus actions on projective varieties and property (\textbf{S}). I thank Marco Manetti, Gabriele Mondello, Giulio Codogni, Ernesto Mistretta and Kieran O'Grady for discussions concerning Ehresmann's trivialisation theorem. I thank Salvatore Stella for discussions on $\mathbf{g}$--vectors. I thank Claus-Michael Ringel for discussions concerning radiation modules (used in a previous version of the paper). I am indebted with Markus Reineke and Evgeny Feigin, without whom this paper would have not been possible. I thank Francesco Esposito for never--ending generic discussions. I specially thank DFG-SPP-1388 and in particular Peter Littelmann for having supported my project ``Categorification of positivity in cluster algebras''. I am grateful to an anonymous referee for his meticulous reading of the manuscript and many deep remarks and suggestions.

This work was supported by the national FIRB project ``Perspectives in Lie Theory'' RBFR12RA9W.

\end{document}